\documentclass[11pt]{amsart}

\usepackage{amsmath, amsthm, amsfonts, amssymb, xy,  mathrsfs, graphicx, lscape, array, booktabs}
\usepackage{enumerate}
\usepackage[usenames, dvipsnames]{xcolor}
\usepackage[margin=1in]{geometry} 
\usepackage{tikz}
\usetikzlibrary{matrix,arrows}

\numberwithin{equation}{section}
\newtheorem{theorem}[equation]{Theorem}

\newtheorem{proposition}[equation]{Proposition}
\newtheorem{lemma}[equation]{Lemma}
\newtheorem{corollary}[equation]{Corollary}
\newtheorem{conjecture}[equation]{Conjecture}
\newtheorem{problem}[equation]{Problem}
\newtheorem{notation}[equation]{Notation}

\theoremstyle{definition}
\newtheorem{rmk}[equation]{Remark}
\newenvironment{remark}[1][]{\begin{rmk}[#1] \pushQED{\qed}}{\popQED \end{rmk}}
\newtheorem{eg}[equation]{Example}
\newenvironment{example}[1][]{\begin{eg}[#1] \pushQED{\qed}}{\popQED \end{eg}}
\newtheorem{defn}[equation]{Definition}
\newenvironment{definition}[1][]{\begin{defn}[#1]\pushQED{\qed}}{\popQED \end{defn}}
\newtheorem{ques}[equation]{Question}
\newenvironment{question}[1][]{\begin{ques}[#1]\pushQED{\qed}}{\popQED \end{ques}}
\usepackage[in]{fullpage}
\addtolength{\oddsidemargin}{.25in}
\addtolength{\evensidemargin}{.25in}
\addtolength{\textwidth}{-.5in}

\addtolength{\topmargin}{0in}
\addtolength{\textheight}{0in}


\usepackage[colorlinks=true, pdfstartview=FitV, linkcolor=black, citecolor=black, urlcolor=black]{hyperref}


\newcommand{\cN}{\mathcal{N}}

\newcommand{\cP}{\mathcal{P}}

\newcommand{\bd}{\mathbf{d}}

\newcommand{\bp}{\mathbf{p}}

\newcommand{\bq}{\mathbf{q}}

\newcommand{\za}{\ensuremath{\alpha}}

\newcommand{\ZZ}{\mathbb{Z}}

\newcommand{\NN}{\mathbb{N}}
\newcommand{\dd}{\textbf{d}}

\renewcommand{\phi}{\varphi}
\renewcommand{\emptyset}{\varnothing}

\renewcommand{\tilde}[1]{\widetilde{#1}}
\newcommand{\ol}[1]{\overline{#1}}

\newcommand{\setst}[2]{\left\{ #1 \mid #2 \right\}}


\newcommand{\comment}[1]{}

\makeatletter
\def\Ddots{\mathinner{\mkern1mu\raise\p@
\vbox{\kern7\p@\hbox{.}}\mkern2mu
\raise4\p@\hbox{.}\mkern2mu\raise7\p@\hbox{.}\mkern1mu}}
\makeatother


\newcommand{\Hom}{\operatorname{Hom}}

\DeclareMathOperator{\rad}{rad}

\DeclareMathOperator{\Ext}{Ext}

\DeclareMathOperator{\End}{End}

\DeclareMathOperator{\Aut}{Aut}

\DeclareMathOperator{\ind}{ind}

\DeclareMathOperator{\rep}{rep}
\DeclareMathOperator{\Mat}{Mat}




\newcommand{\kk}{\Bbbk}
\DeclareMathOperator{\soc}{soc}
\DeclareMathOperator{\id}{id}

\begin{document}
\title{On algebras of finite general representation type}
\author{Ryan Kinser}
\address{University of Iowa, Department of Mathematics, Iowa City, IA, USA}
\email[Ryan Kinser]{ryan-kinser@uiowa.edu}

\author{Danny Lara}
\address{Texas State University, Department of Mathematics, San Marcos, TX, USA}
\email[Danny Lara]{d\_l396@txstate.edu}

\begin{abstract}
We introduce the notion of ``finite general representation type'' for a finite-dimensional algebra, a property related to the ``dense orbit property'' introduced by Chindris-Kinser-Weyman. We use an interplay of geometric, combinatorial, and algebraic methods to produce a family of algebras of wild representation type but finite general representation type.
For completeness, we also give a short proof that the only local algebras of discrete general representation type are already of finite representation type.
We end with a Brauer-Thrall style conjecture for general representations of algebras.
\end{abstract}

\makeatletter
\@namedef{subjclassname@2020}{%
  \textup{2020} Mathematics Subject Classification}
\makeatother

\subjclass[2020]{Primary 16G60; Secondary 16G20, 14L30}

\keywords{quivers, finite dimensional algebras, representation type, general representations, dense orbit, Brauer-Thrall}

\maketitle

\setcounter{tocdepth}{1}
\tableofcontents

\section{Introduction}
\subsection{Context and motivation}
Throughout the paper, $\kk$ denotes a field which we assume is algebraically closed to facilitate discussion, although it can be arbitrary in our main result.  An \emph{algebra} is always associative, and usually finite-dimensional over $\kk$.

A fundamental result in representation theory of algebras is the \emph{tame/wild dichotomy} \cite{Drozd}.  Further differentiating finite representation type algebras within tame representation type, we can informally summarize the idea by saying that every finite-dimensional algebra $A$ falls into exactly one of the following three classes:
\begin{description}
\item[Finite] $A$ has only finitely many isomorphism classes of indecomposable representations;

\item[Tame] not of finite type, but for each fixed dimension $d$, there exist finitely many families of representations, each depending on at most one parameter from $\kk$, such that every isomorphism class of indecomposable representations of $A$ of dimension $d$ appears in one of these families;

\item[Wild] given an arbitrary $N \in \ZZ_{\geq_0}$, there exists a dimension $d$ such that $A$ has a family of isomorphism classes of indecomposable representations depending on at least $N$ parameters from $\kk$. 
\end{description}

This description is only informal because we have not made precise the meaning of families of representations depending on a certain number of parameters.
We can geometrically formalize it following \cite[\S1.9]{Kac83}.
If $G$ is a connected algebraic group acting on an irreducible algebraic variety $X$, Rosenlicht's theorem says there exists an open subset $X^\circ \subset X$ which has a geometric quotient $X^\circ \to X^\circ // G=:Z$.
More generally if $X$ is a constructible subset of a variety, we can apply Rosenlicht's theorem to each irreducible component $Y_1, \dotsc, Y_s$ of $X$ 
and get geometric quotients $Y_i^\circ \to Z_i$.  We define the \emph{general number of parameters} of $G$ on $X$ as $\mu^\circ(G,X):=\max_i \dim Z_i$ (which does not depend on the choices of $Y_i^\circ$). 
For example, $\mu^\circ(G,X)=0$ if and only if every irreducible component of $X$ has a dense $G$-orbit.
We can iterate this procedure,
replacing $X$ with $X \setminus (Y_1^\circ \cup \dotsc \cup Y_s^\circ)$ after each step, which is again a constructible set, but of strictly smaller dimension than $X$, so the process terminates.
However, the geometric quotients in later steps may actually have larger dimension than those appearing in earlier steps.
We define $\mu(G,X)$ as the maximum dimension of any geometric quotient arising in this process.  We say that the set of orbits of $G$ on $X$ \emph{depends on $\mu(G,X)$ parameters}.

Using Bongartz's geometric Morita equivalence \cite{Bongartz91}, we may as well restrict our attention to algebras which are admissible (in particular, finite dimensional) quotients of quiver path algebras, $A=\kk Q/I$.  We assume $A$ is of this form throughout the paper.
Then the general situation above is applicable to the action of the base change group $GL(\bd)$ on the variety of representations of $A$ of dimension vector $\bd$, denoted $\rep_A(\bd)$.  For each $\bd$, the indecomposable representations form a $GL(\bd)$-invariant constructible subset $\ind_A(\bd) \subset \rep_A(\bd)$.
We write 
\begin{equation}
    \mu_A(\bd) = \mu(GL(\bd), \ind_A(\bd))
    \quad\text{and}\quad
    \mu^\circ_A(\bd) = \mu^\circ(GL(\bd), \ind_A(\bd)).
\end{equation}
By convention we take $\mu_A(\bd)=0$ if $\ind_A(\bd)=\emptyset$.

When $A=\kk Q$ we simply use $Q$ as the subscript instead of $\kk Q$.
In this case, studying behavior of \emph{general representations} of quivers, meaning behavior on dense subsets of each $\rep_Q(\bd)$, has a rich history including the works \cite{Kac83,Schofield92,CB96,CB96b,DW02,DSW07, BHZT09,DF15,GHZ18}.
A number of classical results on representations of quivers can be formulated in terms of numbers of parameters as follows.  A quiver is of:
\begin{enumerate}
    \item finite representation type if and only if $\mu_Q(\bd)=0$ for all $\bd$;
    \item tame representation type if and only if $\mu_Q(\bd)\leq 1$  for all $\bd$;
    \item wild representation type if and only if $\mu_Q(\bd)$ is unbounded as $\bd$ varies.
\end{enumerate}
The remarkable fact that there is no intermediate situation between (2) and (3) is a special case of the tame-wild dichotomy for arbitrary finite-dimensional algebras, first proved by Drozd \cite{Drozd,CB88}.
And though we know of no direct reference, it is well understood that all the statements above remain true when $\mu_Q$ is replaced by $\mu_Q^\circ$, that is, when dealing with general representations instead of all representations of quivers.

But the situation becomes much more interesting and subtle when moving to arbitrary $A$.
Consider (1), for example: the fact that finite representation type implies $\mu_A(\bd)=0$ for all $\bd$ is immediate.  But conversely, assuming $\mu_A(\bd)=0$ for all $\bd$ only gives immediately that there are finitely many indecomposable representations of each dimension vector $\bd$, without giving finitely many overall.  We would need to know that a representation infinite algebra admits a dimension vector with infinitely many indecomposables; this fact is highly nontrivial as it implies (along with \cite{Smalo80}) the second Brauer-Thrall conjecture, a difficult theorem first proved by Bautista \cite{Bautista85}.
On the other hand, statements (2) and (3) still hold essentially by definition when replacing $Q$ with $A$.

This brings us to the topic of the present paper, general representations and the functions $\mu^\circ_A$ for arbitrary $A$.
We are not aware of any major results in the generality of the above mentioned ones.
We contribute in this direction by constructing examples of algebras that display behavior not seen in any of the above situations.
Our algebras $A$ are of wild representation type, but have $\mu^\circ_{A}(\bd) = 0$ for all $\bd$ (called the \emph{dense orbit property} in \cite{CKW15}).
The representation theoretic significance of this is that classification of all finite-dimensional representations of such an algebra up to isomorphism is infeasible, yet becomes a finite problem when restricting to general representations of any given dimension vector.  We call such algebras \emph{discrete general representation type}, and \emph{finite general representation type} when there are only finitely many indecomposable representations whose orbits in $\rep_A(\bd)$ are open subvarieties.

\subsection{Main result and related literature}
There are very few wild algebras in the literature which are known to have discrete or finite general representation type (surveyed below).  This is likely because nice algebraic tools for investigating this property have not yet been developed.  One of the motivations of this paper is to broaden the example base, from which a broader understanding can be pursued.  Our main result is the following theorem about the algebras $\Lambda(m,n)$ defined below,
which are known to be of wild representation type  \cite{HM88} except when $(m,n)\leq (4,4)$ or $(m,n)\leq (6,3)$ or $(m,n)\leq (3,6)$ in the standard order on $\ZZ_{\geq 0}^2$.

\begin{theorem}\label{thm:main}
Consider the algebras $\Lambda(m,n)$ given by the following quiver with relations.
\begin{equation}\label{eq:mainalgebras}
\vcenter{\hbox{
\begin{tikzpicture}[point/.style={shape=circle,fill=black,scale=.5pt,outer sep=3pt},>=latex]
\node[point,label={below:$1$}] (1) at (0,0) {} edge[in=135,out=225,loop] node[left] {$a$} () ;
\node[point,label={below:$2$}] (2) at (2,0) {} edge[in=45,out=-45,loop] node[right] {$b$} ();
\path[->] (1) edge node[above] {$c$} (2) ;
\end{tikzpicture} }}
\qquad 
a^m = b^n = ca - bc = b^2 c = 0
\end{equation}
The algebra $\Lambda(m,n)$ is of finite general representation type.
\end{theorem}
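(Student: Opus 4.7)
The plan is to describe the generic representation in each $\rep_{\Lambda(m,n)}(\bd)$ explicitly and show that its Krull--Schmidt decomposition only uses indecomposable summands drawn from a finite list depending on $(m,n)$. Since an indecomposable representation $M$ has an open orbit in $\rep_{\Lambda(m,n)}(\dim M)$ exactly when $M$ is a generic representation at $\dim M$, this finite list then contains every indecomposable with an open orbit, yielding finite general representation type. The first step is a structural reduction: the relations $ca = bc$ and $b^2 c = 0$ force $ca^2 = b^2 c = 0$, so in any representation of $\Lambda(m,n)$, the map $C \colon V_1 \to V_2$ factors through $V_1/A^2 V_1$ and lands in $\ker B^2 \subseteq V_2$, inducing a $\kk[t]/(t^2)$-module homomorphism
\[
\ol{C} \colon V_1/A^2 V_1 \longrightarrow \ker B^2.
\]
Conversely, any triple $\bigl((V_1, A), (V_2, B), \ol{C}\bigr)$ with $(V_1, A)$ a $\kk[t]/(t^m)$-module, $(V_2, B)$ a $\kk[t]/(t^n)$-module, and $\ol{C}$ such a $\kk[t]/(t^2)$-module map, determines a representation of $\Lambda(m,n)$; this correspondence is compatible with isomorphism and direct sum decomposition, reducing the problem to analyzing generic $\kk[t]/(t^2)$-module maps.

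The second step describes the generic representation explicitly. Writing $d_1 = qm + r$ and $d_2 = pn + s$ with $0 \le r < m$ and $0 \le s < n$, the generic Jordan types of $A$ and $B$ on a dominant irreducible component of $\rep_{\Lambda(m,n)}(\bd)$ are $(m^q, r)$ and $(n^p, s)$, respectively. These types fix the generic isomorphism classes of $V_1/A^2 V_1$ and $\ker B^2$ as $\kk[t]/(t^2)$-modules; both have the form $(\kk[t]/(t^2))^\bullet \oplus \kk^\bullet$ with at most one $\kk$-summand on each side. Since $\kk[t]/(t^2)$ has only two indecomposable modules $\kk$ and $\kk[t]/(t^2)$, the generic $\ol{C}$ admits a Smith-style normal form under the action of the centralizers of $A$ and $B$: it becomes a direct sum of a bounded number of standard ``atomic'' module maps between these indecomposable pieces, together with zero maps on the unmatched summands. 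Lifting each atomic summand via Step 1 yields an indecomposable representation of $\Lambda(m,n)$ whose Jordan block sizes lie in $\{1, \ldots, m\}$ at vertex $1$ and $\{1, \ldots, n\}$ at vertex $2$, producing the desired finite list of summands depending only on $(m, n)$.

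The main technical obstacle I anticipate is verifying that the proposed normal form truly represents an open orbit, rather than merely a dense constructible subset of some irreducible component. This reduces to showing that the candidate representation's endomorphism ring has the minimal possible dimension, equivalently that its $GL(\bd)$-orbit dimension matches $\dim \rep_{\Lambda(m,n)}(\bd)$. It also requires ruling out the possibility that non-generic Jordan types on $A$ or $B$ yield a component of $\rep_{\Lambda(m,n)}(\bd)$ of strictly larger dimension than the one we describe; this is a dimension count balancing the centralizer dimensions of generic nilpotent matrices against the jump in fiber dimensions $\dim \Hom_{\kk[t]/(t^2)}(V_1/A^2 V_1, \ker B^2)$ on non-generic loci.
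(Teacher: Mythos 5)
Your structural reduction in Step~1 is correct and is essentially the paper's identification $H(A,B)\simeq\soc^2\Hom_{\kk[T]}(X,Y)\cong\Hom_{\kk[T]/(T^2)}(V_1/A^2V_1,\ker B^2)$ in \S\ref{sec:KT}, just expressed differently. The gaps come in Steps~2 and~3.

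First, and most seriously, you restrict attention to a single ``dominant'' stratum of $\rep_\Lambda(\bd)$, but finite general representation type (Definition~\ref{def:discretegeneral}) demands control over \emph{every} irreducible component of every $\rep_\Lambda(\bd)$: each must have a dense orbit, and you must count those whose generic element is indecomposable. The opening claim ``an indecomposable $M$ has an open orbit in $\rep_{\Lambda(m,n)}(\dim M)$ exactly when $M$ is generic at $\dim M$'' silently replaces ``dense orbit in its irreducible component'' by ``open orbit in the whole (reducible) variety,'' which discards all non-dominant components. The representation varieties here are genuinely reducible, and the paper's machinery (the strata $\rep_\Lambda^{\bp,\bq}(\bd)$, Lemmas~\ref{lem:pqhcontainment} and~\ref{lem:pqbar}, and Proposition~\ref{prop:pqreduction}) exists precisely to corral \emph{all} $(\bp,\bq)$ that could be irreducible components — not just the maximal Jordan type $(m^q,r),(n^p,s)$. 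Note in particular that the dimension of a stratum $\rep_\Lambda^{\bp,\bq}(\bd)$ is $\dim(\cN^\bp_{d_1}\times\cN^\bq_{d_2})+h(\bp,\bq)$, and while the orbit dimension decreases going down dominance order, $h$ can \emph{increase}; whether these cancel to produce further components is exactly what Lemma~\ref{lem:pqhcontainment} controls, and it only applies when $h$ is constant. Your ``technical obstacle'' paragraph anticipates the wrong issue: the problem is not merely whether a non-generic type could be \emph{the} dominant component, but that non-dominant types yield \emph{additional} components whose indecomposable generic elements you must also bound.

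Second, the ``Smith-style normal form'' claim is unsubstantiated, and for most strata it is simply false — these algebras are wild, so for general $(\bp,\bq)$ the stratum $\rep_\Lambda^{\bp,\bq}(\bd)$ has no dense $GL(\bd)$-orbit (the paper explicitly notes this in \S4.1). The normal form $M_{\bp,\bq}$ only exists after the combinatorial reductions of Proposition~\ref{prop:pqreduction} force $\bp=(m^{a_m},k^{a_k},1)$ and $\bq=(n^{b_n},l^{b_l},2^{b_2})$, and even then the row/column operations are constrained by the \emph{full} Jordan block sizes $p_i,q_j$ (Lemma~\ref{lem:rowandcolumn}), not just by the $\kk[t]/(t^2)$-module structures of $V_1/A^2V_1$ and $\ker B^2$; the image of $Z_A\times Z_B$ in $\Aut_{\kk[t]/(t^2)}(V_1/A^2V_1)\times\Aut_{\kk[t]/(t^2)}(\ker B^2)$ is in general a proper subgroup. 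Reducing to $\kk[t]/(t^2)$-maps captures the underlying variety but not the relevant group action, so the ``Smith-style'' elimination needs to be proved, not asserted. In short, you have the right coordinates but neither the dominance-order control of components nor the normal-form verification, and both are essential to the theorem.
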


We end the paper in \S\ref{sec:future} with a discussion of how these algebras relate to others appearing in the literature with interesting properties, and propose a Brauer-Thrall style conjecture for general representations (Conjecture \ref{conj:discretefinite}).

Before proving this theorem, we first provide a complete proof that the only local algebras of discrete general representation type are already of finite representation type (Proposition \ref{prop:localDO}).
We do this by showing that for a local algebra given by a quiver with at least 2 loops,
the variety of 2-dimensional representations has an irreducible component without a dense orbit.
The key point which requires some work is to show that the natural family of representations to consider is actually dense in an irreducible component.

We now briefly survey the literature for comparisons.
While the famous Artin-Voigt lemma gives a sufficient homological condition for a representation $M$ to have a dense orbit in its irreducible component (namely, $\Ext_A^1(M,M)=0$), this is far from necessary; just consider 1-dimensional representations of $A=\kk[x]/(x^2)$. 
The property of a representation having a dense orbit in its irreducible component was already being examined in more detail in the article of Dade on ``algebraically rigid modules'' in the proceedings of the second ICRA \cite{Dade79}.

The first examples of wild algebras of discrete general representation type were given in \cite[Thm.~4.1]{CKW15} (there called the \emph{dense orbit property}). 
The authors constructed a family of algebras $\Lambda(n)$, depending on $n \in \ZZ_{>0}$, which are of wild representation type for $n \geq 6$ but have $\mu^\circ_{\Lambda(n)}(\bd) = 0$ for all $\bd$ and $n$.  The proof method revealed that they are even of finite general representation type and gave a classification of general indecomposable representations.

Bobi\'nski recently showed that the ``Birkhoff algebras'' are of finite general representation type \cite[Cor.~3.3]{Bobinski21}, as a part of his proof of a more difficult theorem that all of their representation varieties are irreducible.  
These Birkhoff algebras are given by the same quiver in \eqref{eq:mainalgebras} but specializing to $m=n$ and removing $b^2 c=0$ from the defining relations.  
This means that all of the algebras appearing in Theorem \ref{thm:main} are quotients of Birkhoff algebras, 
providing more evidence for an affirmative answer to \cite[Question~4.5]{CKW15}.
Our setup is also closely related to the ``invariant subspace problem'' studied in  \cite{RS06,RS08,RS08b,KS15,KS17,KKS18,KS18,DMS19}.  The varieties appearing there are open subvarieties of the ones we study.

Finally, we recall a few results indicating where \emph{not} to look for wild algebras of finite (or discrete) general representation type, and mention some related conjectures.  We know of a few classes where it has been shown that algebras can only have discrete general representation type if they are already representation finite: algebras whose Auslander-Reiten quiver has a preprojective component \cite[Thm.~3.5]{CKW15}, special biserial algebras \cite[Prop.~4.4]{CKW15}, and radical square zero algebras \cite[Thm.~7.2]{BCHZ15}.
Furthermore, Mousavand has shown that for a minimal representation-infinite algebra which is biserial or nondistributive, an algebra of discrete representation type must be \emph{brick finite} \cite[Thm.~1.7]{Mousavand} (i.e. has finitely many isomorphism classes of \emph{bricks}, which are representations with endomorphism ring $\kk$). We note that being brick finite is equivalent to being $\tau$-tilting finite \cite{DIJ19}.  
Mousavand also conjectured \cite[Conj.~6.6]{Mousavand} that any algebra of discrete general representation type is brick finite, and
it follows from a theorem of Mousavand and Paquette \cite[Thm.~1.5]{MP} that an algebra of finite general representation type must be brick finite, so Conjecture \ref{conj:discretefinite} of this paper actually implies Mousavand's conjecture above. 
They furthermore conjectured that an algebra being \emph{brick discrete} (i.e. finitely many bricks in each fixed dimension, called \emph{Schur representation finite} in \cite{CKW15}) is equivalent to being brick finite.  

General representations are also intimately connected to moduli spaces of quiver representations; the following list of references is highly incomplete but gives some starting points \cite{King94,Schofield01,Reineke03,CS08,Bob14,HZ14,CC18,CK18,CCKW18}.
We expect that many intricate connections between these finiteness properties (as well as others involving moduli spaces, etc. \cite[\S5]{CKW15}) remain undiscovered, and look forward to future developments in this direction.

\subsection*{Acknowledgements}
We thank Grzegorz Bobi\'nski, Hipolito Treffinger, Kaveh Mousavand, and Charles Paquette for comments.
We also thank Jenna Rajchgot for M2 consultation related to Example \ref{ex:stratacontainment}.
Finally, we appreciate an anonymous referee's diligent reading and thoughtful suggestions which substantially improved the readability and correctness of the final draft.
This work was supported by a grant from the Simons Foundation (636534, RK) and by the National Science Foundation under Award No. DMS-2303334.

\section{Background}
\subsection{Quiver representations and finite-dimensional algebras}
We denote a \emph{quiver} by $Q=(Q_0, Q_1, t, h)$, where $Q_0$ is the \emph{vertex} set, $Q_1$ the \emph{arrow} set, and $t, h\colon Q_1 \to Q_0$ give the \emph{tail} and \emph{head} of an arrow $t\za \xrightarrow{\za} h\za$.
A \emph{representation} $M$ of $Q$ is a collection of (finite-dimensional) $\kk$-vector spaces $(M_z)_{z \in Q_0}$ assigned to the vertices of $Q$, along with a collection of $\kk$-linear maps $(M_\alpha \colon M_{t\alpha} \to M_{h\alpha})_{\alpha \in Q_1}$ assigned to the arrows.  
We recall some key facts here, but for a more detailed recollection we refer the interested reader to standard references such as \cite{ASS06,Schiffler14,DWbook}.

Our convention is that a \emph{path} of positive length in a quiver $Q$ is a sequence of arrows $(\alpha_1, \dotsc, \alpha_r)$ where $t\alpha_i = h\alpha_{i+1}$ for each $i$, so paths are read right to left when following the arrows.  There is also a unique length zero path at each $z \in Q_0$.
Then $Q$ determines a \emph{path algebra} $\kk Q$ as in the references above.
A \emph{relation} $r$ on $Q$ is a $\kk$-linear combination $r=\sum_i c_i p_i$ of paths $p_i$ in $Q$ which all have the same tail and head, and each $c_i\in \kk$.
The category of (left) modules over the algebra $\kk Q/I$ is equivalent to the category of representations of a \emph{quiver with relations} $(Q,R)$,
where $R$ is usually taken to be a minimal set of relations generating $I$.
These equivalences can be used freely without significantly affecting the geometry, as made precise in \cite{Bongartz91}.

Given a nonnegative integer $n$, we write $Q_{\geq n}$ for the set of all paths of $Q$ of length greater than or equal to $n$, and $\kk Q_{\geq n} \subseteq \kk Q$ for the $\kk$-span of this set.
An ideal is \emph{admissible} if $\kk Q_{\geq N} \subset I \subset \kk Q_{\geq 2}$ for some $N \geq 2$.  
Given a finite-dimensional $\kk$-algebra $A$, it is Morita equivalent to a quotient of a path algebra $\kk Q/I$. If $I$ is taken to be admissible (which is always possible), then $Q$ is uniquely determined, 
and the Jacobson radical $\rad(\kk Q/I)$  is spanned by $Q_{\geq 1}$ mod $I$. We always assume that $I$ is admissible when we refer to an algebra of the form $A=\kk Q/I$.

\subsection{Representation varieties}\label{sec:repvariety}
Given a quiver $Q$ and \emph{dimension vector} $\bd \colon Q_0 \to \mathbb{Z}_{\geq 0}$, we study the \emph{representation variety}
\[
\rep_Q(\bd) = \prod_{\za \in Q_1} \Mat_{\bd(h\za),\bd(t\za)}(\kk),
\]
where $\Mat_{m,n}(R)$ denotes the space of matrices with $m$ rows, $n$ columns, and entries in a ring $R$.
We consider the left action of the \emph{base change group}
\[
GL(\bd) = \prod_{z \in Q_0} GL(\bd(z))
\]
on $\rep_Q(\bd)$ given by
\[
g\cdot M = (g_{h\za}M_\alpha g_{t\za}^{-1})_{\za\in Q_1},
\]
where $g = (g_z)_{z \in Q_0} \in GL(\bd)$ and $M = (M_\za)_{\za \in Q_1} \in \rep_Q(\bd)$.

Now consider an algebra $A=\kk Q/I$ with corresponding quiver with relations $(Q,R)$. 
For a relation $r=\sum_i c_i p_i$ on $Q$ and $M \in \rep_Q(\bd)$, we write $M_r=\sum_i c_i M_{p_i}$ where $M_{p_i}$ is the product of the matrices associated to arrows along the path $p_i$ in the appropriate order.
Then the representation variety $\rep_A(\bd)$ is the closed $GL(\bd)$-stable subvariety of $\rep_Q(\bd)$ defined by
\[\rep_A(\bd) = \{ M \in \rep_Q(\bd) \, \mid \, M_r=0, \, \mbox{ for all } r\in R\}.\]
Thus, the points of $\rep_A(\bd)$ are representations of $(Q,R)$ of dimension vector $\bd$.  
Simply from the definitions, $GL(\bd)$-orbits in $\rep_A(\bd)$ are in bijection with isomorphism classes of representations of $A$ of dimension vector $\bd$. 

The varieties $\rep_A(\bd)$ are not necessarily irreducible.
We say that an irreducible component $C \subseteq \rep_A(\bd)$ is an \emph{indecomposable irreducible component} if $C$ has a dense subset whose points correspond to indecomposable representations of $A$.
The first term defined below is equivalent to the \emph{dense orbit property} of \cite{CKW15}.  
We use a different name and definition to emphasize the representation theoretic significance.

\begin{definition}\label{def:discretegeneral}
An algebra $A$ is of \emph{discrete general representation type} if, for each dimension vector $\mathbf{d}$, there is a dense subset of the variety $\rep_A(\mathbf{d})$ which has finitely many $GL(\mathbf{d})$-orbits.  
In other words, every irreducible component of each variety $\rep_A(\mathbf{d})$ has a dense orbit.

Such an algebra is furthermore of \emph{finite general representation type} if, in addition, it has only finitely many isomorphism classes of indecomposable representations whose orbits are dense their corresponding irreducible component.  In other words, if every irreducible component of each variety $\rep_A(\mathbf{d})$ has a dense orbit, and only finitely many of these (varying over all $\bd$) are indecomposable irreducible components.
\end{definition}

As noted in \cite[\S2.1]{CKW15}, the following is a straightforward consequence of the geometric Krull-Schmidt theorem of Crawley-Boevey and Schr\"oer \cite[Thm.~1.1]{CBS02}.

\begin{corollary}\label{cor:indecompcomponent}
An algebra $A$ is of discrete general representation type if and only if each indecomposable irreducible component of each $\rep_A(\bd)$ has a dense orbit.
\end{corollary}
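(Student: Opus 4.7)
The forward direction is immediate from the definition: if every irreducible component of each $\rep_A(\bd)$ has a dense orbit, then in particular every indecomposable irreducible component does. All the work lies in the converse.

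For the converse, the plan is to assume every indecomposable irreducible component in every $\rep_A(\bd)$ has a dense orbit, and then show that an arbitrary irreducible component $C \subseteq \rep_A(\bd)$ has a dense orbit. The key input is the geometric Krull--Schmidt theorem of Crawley-Boevey and Schr\"oer (referenced in the excerpt as \cite{CBS02}), which says that $C$ admits an essentially unique decomposition $C = \overline{C_1 \oplus \cdots \oplus C_t}$, meaning that $C$ is the closure of the image of the natural direct-sum map
\[
GL(\bd) \times_{\prod GL(\bd_i)} \bigl(C_1 \times \cdots \times C_t\bigr) \longrightarrow \rep_A(\bd),
\]
where each $C_i$ is an indecomposable irreducible component of $\rep_A(\bd_i)$ and $\sum \bd_i = \bd$.

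By the hypothesis, each $C_i$ contains a dense orbit $GL(\bd_i)\cdot M_i$ for some indecomposable $M_i$. Since density is preserved by products, $\prod_i GL(\bd_i) \cdot M_i$ is dense in $C_1 \times \cdots \times C_t$, and the direct-sum map is $GL(\bd)$-equivariant and has dense image in $C$. Composing, one concludes that the single $GL(\bd)$-orbit of $M := M_1 \oplus \cdots \oplus M_t$ is dense in $C$. This establishes that every irreducible component of every $\rep_A(\bd)$ has a dense orbit, which is exactly the condition that $A$ is of discrete general representation type.

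The main (and essentially only) obstacle is ensuring that the decomposition of $C$ and the density-of-direct-sums step are invoked correctly; these follow from \cite[Thm.~1.1]{CBS02} together with the observation that the direct-sum map has image dense in $C$ by construction, so no further geometric argument is needed.
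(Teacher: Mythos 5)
Your proof is correct and is precisely the argument the paper alludes to when it calls the corollary ``a straightforward consequence'' of the Crawley-Boevey--Schr\"oer geometric Krull--Schmidt theorem: the paper does not spell out the details, and your fleshing-out (decompose $C = \overline{C_1 \oplus \cdots \oplus C_t}$ into indecomposable components, take dense orbits $GL(\bd_i)\cdot M_i$ in each $C_i$, and push forward through the dominant direct-sum map to conclude $GL(\bd)\cdot(M_1 \oplus \cdots \oplus M_t)$ is dense in $C$) is the intended route. The only implicit step worth noting is that a dense orbit is automatically open in its component, so its product with the others is a dense open subset whose image under the dominant direct-sum morphism is dense --- but this is standard and does not constitute a gap.
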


\subsection{Algebraic geometry}\label{sec:AG}
We include here definitions of basic algebraic geometry terminology which can be reviewed in \cite[Ch.~AG]{Borel}.
To say a statement is true for a \emph{general point} of an algebraic variety $X$ means that there exists a dense, open subset $U\subseteq X$ such that the statement is true for all $x \in U$.  If $\alpha\colon X\to Y$ is a morphism of varieties, the \emph{fiber of $\alpha$ over $y \in Y$} is the set $\alpha^{-1}(y)$, which is a closed subvariety of $X$ \cite[\S10.1]{Borel}. An \emph{irreducible subset} of a topological space is one which cannot be expressed as a union of two proper, nonempty closed subsets.  A subset of a topological space is \emph{locally closed} if it is open in its closure.

For $y \in X$ a variety, $T_y X$ denotes the \emph{Zariski tangent space of $X$ at $y$} \cite[\S16]{Borel}.

\subsection{Local algebras}
As a preliminary observation, we show that there are no local algebras which are representation infinite but have discrete general representation type.
We start by recalling a fact from algebraic geometry which can be found, for example, as a special case of \cite[AG~10.1]{Borel}.
\begin{theorem}\label{thm:fiber}
Let $\alpha \colon X \to Y$ be a surjective morphism of irreducible varieties.  Then     for a general point $y \in Y$, the dimension of each irreducible component of the fiber $\alpha^{-1}(y)$ is $\dim X - \dim Y$.

In particular, if an algebraic group $G$ acts on a variety $X$, then for any $x \in X$ we have
\begin{equation}
    \dim G= \dim (G\cdot x) + \dim G_x
\end{equation}
where $G_x$ is the stabilizer of $x$.
\end{theorem}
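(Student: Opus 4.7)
The plan is to prove the fiber dimension statement first, then deduce the orbit-stabilizer formula by applying it to the orbit map.

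For the first part, I would factor $\alpha$ through a projection and a generically finite map. Set $n = \dim X - \dim Y$. Since $\alpha$ is surjective between irreducible varieties, the comorphism makes $k(X)$ a finitely generated field extension of $k(Y)$ of transcendence degree exactly $n$. Choose a transcendence basis $t_1, \ldots, t_n \in k(X)$ of this extension and pass to suitable affine open subsets where the $t_i$ become regular functions to produce a factorization
\begin{equation}
X \xrightarrow{\beta} Y \times \mathbb{A}^n \xrightarrow{\pi} Y,
\end{equation}
where $\pi$ is the projection and $\beta$ is generically finite (after shrinking $Y$ further so that a primitive element for the finite extension $k(X)/k(Y)(t_1,\ldots,t_n)$ satisfies an integral relation over the coordinate ring). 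Over a sufficiently small dense open $U \subseteq Y$, the restriction of $\beta$ to $\alpha^{-1}(U)$ is finite, so each fiber $\alpha^{-1}(y)$ for $y \in U$ maps with finite fibers onto $\{y\}\times\mathbb{A}^n$, forcing every component of $\alpha^{-1}(y)$ to have dimension exactly $n$.

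The main obstacle is ensuring that \emph{every} component of a general fiber has dimension $n$, not merely some component. The lower bound $\dim C \geq n$ on each component $C$ of each fiber is automatic from Krull's height theorem applied to local defining equations of $y$. The matching upper bound holds generically by the factorization above, but one must also remove the closed locus of $Y$ over which fiber dimension jumps; this is packaged cleanly by Chevalley's upper semicontinuity theorem, which asserts that $\{y \in Y : \dim \alpha^{-1}(y) \geq k\}$ is closed for every $k$.

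For the orbit-stabilizer formula, I would apply the first part to the orbit morphism $\alpha \colon G \to \overline{G\cdot x}$ given by $g \mapsto g\cdot x$, which is surjective onto the irreducible closure of the orbit (assuming $G$ connected; the general case reduces to this componentwise since all components of $G$ have the same dimension). The fibers of $\alpha$ are precisely the cosets $gG_x$, so \emph{all} fibers are equidimensional of dimension $\dim G_x$, which bypasses any subtlety about the word ``general.'' Since $G\cdot x$ is open in $\overline{G\cdot x}$, one has $\dim \overline{G\cdot x} = \dim(G\cdot x)$, and the fiber dimension theorem yields the desired equality $\dim G = \dim(G\cdot x) + \dim G_x$.
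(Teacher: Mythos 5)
The paper does not actually prove Theorem \ref{thm:fiber}; it explicitly recalls it with a citation to \cite[AG~10.1]{Borel}, so there is no in-paper argument to compare against. Your sketch is the standard one from the literature and is essentially correct.

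Two remarks on its logical structure. First, after shrinking $X$ to an affine open where the chosen transcendence basis becomes regular, you control only those components of $\alpha^{-1}(y)$ that meet that open; the clean fix is to cover $X$ by finitely many such affines and intersect the corresponding dense opens of $Y$, after which \emph{every} component of a general fiber is handled without any appeal to semicontinuity. Second, the Chevalley statement you invoke --- that $\{y : \dim\alpha^{-1}(y)\ge k\}$ is closed in $Y$ --- is true for morphisms of varieties, but its standard proof (Shafarevich Ch.~I \S6.3, or Hartshorne Ex.~II.3.22) already uses the generic fiber-dimension theorem as a lemma and then does noetherian induction on closed subsets of $Y$; so citing it here to plug the ``every component'' gap is not an independent input but a repackaging of the result being proved. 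The Krull lower bound (valid for every component of every nonempty fiber) plus the finite affine cover is the tighter route. Your deduction of the orbit--stabilizer formula is correct: reducing to $G^0$, noting $G^0\cdot x$ is open dense in $\overline{G^0\cdot x}$, and observing that since every nonempty fiber of $g\mapsto g\cdot x$ is a coset of $G_x$ (hence all of the same dimension), the ``general $y$'' hypothesis can be dropped and the formula holds for any $x$.
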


The next proposition applies the theorem above to help us identify irreducible components of representation varieties, and whether or not they have a dense orbit, by dimension counting.

\begin{proposition}\label{prop:irrcomp}
Let $G$ be a connected algebraic group and let $X$ be a $G$-variety, both over the field $\kk$.  Suppose that $Y \subset X$ is a locally closed, irreducible subset satisfying both:
\begin{enumerate}[(i)]
    \item for each $x \in X$, the intersection $Y \cap (G\cdot x)$ is a finite set;
    \item for all $y \in Y$, we have
    \begin{equation}\label{eq:tangent space inequality}
        \dim_{\kk} \frac{T_y(X)}{T_y(G\cdot y)} \leq \dim Y.
    \end{equation}
\end{enumerate}
Then $\overline{G\cdot Y}$ is an irreducible component of $X$, of dimension $\dim Y + \dim (G\cdot y)$ where $y \in Y$ is any point whose orbit has maximal dimension.

In particular, if $\dim Y > 0$, then $\overline{G\cdot Y}$ does not have a dense orbit.
\end{proposition}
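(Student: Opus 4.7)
The plan is to analyze the action morphism $\phi \colon G \times Y \to X$ defined by $\phi(g,y) = g\cdot y$, whose image is $G\cdot Y$. Since $G$ is connected hence irreducible, and $Y$ is irreducible by hypothesis, the product $G\times Y$ is irreducible, so the closure $\overline{G\cdot Y}$ of the image is irreducible. All the real work lies in computing its dimension and checking maximality.

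First I would compute $\dim \overline{G\cdot Y}$ using Theorem \ref{thm:fiber}. Let $d_{\max} := \max_{y\in Y} \dim(G\cdot y)$, attained on a dense open subset of $Y$. For a general point $x$ in the image, choose a representative $x = g_0 y_0$ with $\dim(G\cdot y_0) = d_{\max}$. The fiber $\phi^{-1}(x)$ consists of pairs $(g,y)$ with $g\cdot y = x$; by hypothesis (i), the $y$-coordinate ranges over the finite set $Y \cap (G\cdot x)$, and for each such $y$ the $g$-coordinate ranges over a coset of the stabilizer $G_y$, which has dimension $\dim G - \dim(G\cdot y) = \dim G - d_{\max}$ (since all these $y$ are in the same orbit as $y_0$). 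Theorem \ref{thm:fiber} then gives
\begin{equation*}
\dim G + \dim Y = \dim \overline{G\cdot Y} + (\dim G - d_{\max}),
\end{equation*}
hence $\dim \overline{G\cdot Y} = \dim Y + d_{\max}$.

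The main step, and the one I expect to be the trickiest to write cleanly, is to upgrade this dimension count into the statement that $\overline{G\cdot Y}$ is an irreducible component of $X$. Let $Z$ be any irreducible component of $X$ containing $\overline{G\cdot Y}$. Because $G$ is connected, it permutes the irreducible components of $X$ trivially, so $Z$ is $G$-stable. Pick any $y \in Y$ with $\dim(G\cdot y) = d_{\max}$; then $y \in Z$, and $G\cdot y \subseteq Z$ forces $T_y(G\cdot y) \subseteq T_y Z \subseteq T_y X$. Combining the inequality $\dim Z \leq \dim T_y Z$ with hypothesis (ii) gives
\begin{equation*}
\dim Z \;\leq\; \dim T_y(G\cdot y) + \dim \frac{T_y X}{T_y(G\cdot y)} \;\leq\; d_{\max} + \dim Y \;=\; \dim \overline{G\cdot Y},
\end{equation*}
so $Z = \overline{G\cdot Y}$, proving it is a full irreducible component.

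Finally, for the ``in particular'' assertion, suppose $\dim Y > 0$ but $\overline{G\cdot Y}$ contains a dense orbit $O$. Then $O$ is open in $\overline{G\cdot Y}$, and it meets $G\cdot Y$ (otherwise $G\cdot Y \subseteq \overline{G\cdot Y}\setminus O$, a proper closed subset, contradicting the definition of the closure); $G$-invariance of $O$ then forces $O \cap Y \neq \emptyset$. So $O\cap Y$ is a nonempty open subset of the irreducible variety $Y$, yet by hypothesis (i) it equals $Y \cap (G\cdot x)$ for any $x \in O$, hence is finite. This forces $\dim Y = 0$, a contradiction. The delicate point throughout is keeping track of whether we have worked in $X$, in $\overline{G\cdot Y}$, or in $Y$, and ensuring that the tangent space hypothesis (ii) really bounds the dimension of an a priori larger component $Z$; once $G$-stability of $Z$ is invoked, this falls into place.
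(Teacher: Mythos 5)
Your proposal is correct and follows essentially the same route as the paper's own proof: compute $\dim(G\cdot Y)$ via the fiber-dimension theorem applied to the action map, using hypothesis (i) to control the fibers, and then use the tangent-space chain $\dim C \leq \dim_\kk T_y(C) \leq \dim_\kk T_y(X)$ together with hypothesis (ii) and smoothness of $G\cdot y$ to bound any component containing $G\cdot Y$ from above. The one small difference is cosmetic: your invocation of $G$-stability of $Z$ is not actually needed (you only use $T_y Z \subseteq T_y X$, not $T_y(G\cdot y) \subseteq T_y Z$), and your ``in particular'' argument spells out a contradiction that the paper compresses into a single dimension count.
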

\begin{proof}
Let $C$ be an irreducible component of $X$ containing $G\cdot Y$.  It is enough to show that $\dim G\cdot Y = \dim C$ since $\overline{G \cdot Y}$ is an irreducible closed subvariety of $X$. Let
\begin{equation}
    \varphi\colon G \times Y \to G\cdot Y \subseteq X, \qquad (g, y) \mapsto g\cdot y
\end{equation}
be the action map.
We first wish to show that $\dim \varphi^{-1}(x)$ for $x \in G\cdot Y$ is equal to $\dim G_x$.  If $(g, y) \in \varphi^{-1}(x)$, then $y \in Y \cap G\cdot x$, which is finite by assumption (i).  So the fiber is a disjoint union of constructible subsets
\begin{equation}
    \varphi^{-1}(x) = \coprod_{y \in Y \cap G\cdot x} \setst{g \in G}{g\cdot y = x} \times \{y\}.
\end{equation}
Each set in the disjoint union has the same dimension as $G_x$: to see this, given $y \in Y \cap G\cdot x$ choose any $g_0 \in G$ such that $g_0 \cdot y = x$, and directly check that $\setst{g \in G}{g\cdot y = x} = G_x g_0$.
Thus $\dim \varphi^{-1}(x) = \dim G_x$.

Let $x \in G\cdot Y$ be a general point, and $y \in Y \cap G\cdot x$. By Theorem \ref{thm:fiber}, we have
\begin{equation}
    \dim (G\cdot Y) = \dim(G \times Y) - \dim G_y.
\end{equation}
The second part of Theorem \ref{thm:fiber} then gives the second equality of:
\begin{equation}
    \dim(G\times Y) - \dim G_y = \dim G + \dim Y - \dim G_y = \dim Y + \dim (G\cdot y).
\end{equation}
Since $G\cdot y$ is smooth, we have $\dim G\cdot y = \dim_\kk T_y(G\cdot y)$.  
Combining this with assumption (ii), we get
\begin{equation}
    \dim Y + \dim (G\cdot y) = \dim Y + \dim_\kk T_y(G\cdot y) \geq \dim_\kk T_y(X) \geq \dim_\kk T_y(C)  \geq \dim C.
\end{equation}
Stringing these together, we find $\dim (G\cdot Y) \geq \dim C$.  But $C$ is an irreducible component containing $G \cdot Y$, so the dimensions must be equal and the proof is completed.

For the ``in particular'' statement, if $\dim Y > 0$ then $\dim (G\cdot y) < \dim (\overline{G\cdot Y})$ for all $y \in Y$, thus no orbit can have dimension equal to that of $\overline{G\cdot Y}$.
\end{proof}

In the situation of $GL(\bd)$ acting on $\rep_A(\bd)$, from the definitions we can see that for $M \in \rep_A(\bd)$ the stabilizer $GL(\bd)_M$ is equal to the set of invertible elements of $\End_A(M)$.  This implies that $\dim GL(\bd)_M = \dim_\kk \End_A(M)$, and so
\begin{equation}
    \dim (GL(\bd)\cdot M) = \dim GL(\bd) - \dim_\kk \End_A(M) = \sum_{z\in Q_0} \bd(z)^2 - \dim_\kk \End_A(M).
\end{equation}
While this allows us to compute the dimension of an orbit from representation theoretic data, it is still difficult to determine if the orbit is dense in an irreducible component because dimensions of irreducible components of $\rep_A(\mathbf{d})$ are not readily computable.
The following classical result is a consequence of the Artin-Voigt Lemma.  It is helpful by giving some information about the codimension of an orbit in terms of representation theoretic data.
It can be found in \cite[II.3.5]{Voigt}, for example.

\begin{lemma}\label{lem:AV}
For $M \in \rep_A(\bd)$, we have
\begin{equation}
    \dim_\kk \Ext^1_A(M,M) \geq \dim_\kk \frac{T_M(\rep_A(\bd))}{T_M(GL(\bd)\cdot M)}.
\end{equation}
\end{lemma}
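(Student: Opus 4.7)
The plan is to construct an explicit $\kk$-linear injection
\[
\Phi\colon T_M(\rep_A(\bd))/T_M(GL(\bd)\cdot M) \hookrightarrow \Ext^1_A(M,M),
\]
from which the dimension inequality is immediate. Given a tangent vector $X = (X_\alpha)_{\alpha \in Q_1} \in T_M(\rep_A(\bd))$, I would form the representation $E_X$ of $Q$ with $E_{X,z} = M_z \oplus M_z$ at each vertex $z$ and arrow maps
\[
E_{X,\alpha} = \begin{pmatrix} M_\alpha & X_\alpha \\ 0 & M_\alpha \end{pmatrix}.
\]
By construction $E_X$ sits in a vector-space-split short exact sequence $0 \to M \to E_X \to M \to 0$ of $Q$-representations (with $M$ included as the first summand and projected onto the second), and I would set $\Phi(X) := [E_X]$.

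To check that $E_X$ is actually a representation of $A$ (not merely of $Q$), a direct block-matrix induction on path length shows that for any path $p = \alpha_1 \alpha_2 \cdots \alpha_\ell$,
\[
E_{X,p} = \begin{pmatrix} M_p & (d_Mp)(X) \\ 0 & M_p \end{pmatrix},
\]
where $(d_Mp)(X) = \sum_{i=1}^\ell M_{\alpha_1}\cdots M_{\alpha_{i-1}} X_{\alpha_i} M_{\alpha_{i+1}} \cdots M_{\alpha_\ell}$ is the formal Leibniz derivative of $p$ at $M$ in the direction $X$, extended linearly to relations. Since $M_r=0$ for $r \in R$, the equation $E_{X,r} = 0$ reduces to the linearized condition $(d_Mr)(X) = 0$. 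This holds because the Zariski tangent space $T_M(\rep_A(\bd))$ is contained in the scheme-theoretic tangent space cut out by precisely these linearized equations, making $\Phi$ well-defined and visibly $\kk$-linear.

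Finally I would compute $\ker\Phi$: the extension class $[E_X]$ vanishes iff there exist linear maps $g_z \in \End_\kk(M_z)$ such that the assignments $s_z\colon v \mapsto (g_z v,\, v)$ from $M_z$ to $E_{X,z}$ are $A$-linear. Checking the intertwining condition $E_{X,\alpha}\, s_{t\alpha} = s_{h\alpha}\, M_\alpha$ arrow by arrow reduces this to $X_\alpha = g_{h\alpha}M_\alpha - M_\alpha g_{t\alpha}$ for every $\alpha$, which is exactly the image of the differential of the orbit map $GL(\bd) \to \rep_A(\bd)$, $g \mapsto g\cdot M$, at the identity --- that is, $T_M(GL(\bd)\cdot M)$. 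Hence $\ker\Phi = T_M(GL(\bd)\cdot M)$, yielding the desired injection. The main subtle point I anticipate is justifying that Zariski tangent vectors on the possibly non-reduced locus $\rep_A(\bd)$ annihilate the linearized relations; this amounts to the standard inclusion of the Zariski tangent space into the scheme-theoretic tangent space, which can be verified by unwinding the $\fm_M/\fm_M^2$ definition and the surjection of local rings.
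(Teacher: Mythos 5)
The paper gives no proof of this lemma --- it simply cites it to Voigt's book \cite[II.3.5]{Voigt} and records the statement for later use, so there is nothing in the paper to compare against line by line. Your argument is the standard proof of (one direction of) the Artin--Voigt lemma and looks correct in every step: the block upper-triangular first-order deformation $E_X$, the Leibniz formula for $E_{X,p}$ by induction on path length, the reduction of $E_{X,r}=0$ (using $M_r=0$) to the linearized condition $(d_M r)(X)=0$, the inclusion of the Zariski tangent space of the reduced variety $\rep_A(\bd)$ into the tangent space of the scheme cut out by the defining relations (via the surjection of local rings and its dual injection on tangent spaces), and finally the computation that a $\kk$-linear section $v\mapsto(g_z v, v)$ is $A$-linear exactly when $X_\alpha = g_{h\alpha}M_\alpha - M_\alpha g_{t\alpha}$, which identifies $\ker\Phi$ with the image of the differential of the orbit map, i.e.\ $T_M(GL(\bd)\cdot M)$. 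The one claim you make without justification is that $X\mapsto[E_X]$ is $\kk$-linear; this requires knowing that Baer sum of two such vector-space-split extensions corresponds to adding the off-diagonal blocks, which is standard but not quite ``visible,'' and since you took the care to flag the reduced-versus-nonreduced subtlety, this step deserves the same brief acknowledgment. Apart from that, your proof is sound and supplies exactly what the paper leaves to the citation.
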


We can now prove the main result of this subsection. 

\begin{proposition}\label{prop:localDO}
A finite-dimensional local algebra is of discrete general representation type if and only if it is representation finite.
\end{proposition}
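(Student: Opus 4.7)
The plan starts with the easy direction: if $A$ is representation-finite, then every $\rep_A(\bd)$ is a finite union of $GL(\bd)$-orbits, so trivially each irreducible component has a dense orbit and $A$ is of discrete general representation type. For the converse, assuming $A$ is a local algebra, I aim to show that if $A$ is not representation-finite then it is not of discrete general representation type. Since $A$ is local, its quiver has one vertex with $\ell$ loops; when $\ell \leq 1$ the algebra is uniserial and already representation-finite, so I may assume $\ell \geq 2$ and produce an irreducible component of $\rep_A(2)$ without a dense orbit.

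The first step is a reduction to $B = A/\rad^2 A$, which is $\kk Q$ modulo all length-two paths, so $\rep_B(\bd) \subseteq \rep_A(\bd)$ is cut out by the additional relations $M_i M_j = 0$. I will argue that $\rep_A(2) = \rep_B(2)$ as (reduced) varieties: every 2-dimensional $A$-module $M$ satisfies $\rad^2 A \cdot M = 0$, since $\rad M$ is either zero (with $M \cong S \oplus S$ for the unique simple $S$) or the 1-dimensional simple submodule of $M$ on which $\rad A$ already acts as zero. With the sets of $\kk$-points coinciding, the irreducible components also coincide, and I may search entirely inside $\rep_B(2)$. The natural family to consider is the affine subspace
\[
Y = \{(\lambda_1 E_{12}, \lambda_2 E_{12}, \dots, \lambda_\ell E_{12}) : \lambda_1 = 1\}
\]
of dimension $\ell - 1 \geq 1$, which lies in $\rep_B(2)$ because $E_{12}^2 = 0$.

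I plan to apply Proposition \ref{prop:irrcomp} with $G = GL(2)$, $X = \rep_B(2)$, and this $Y$. Verifying condition (i) is a quick matrix calculation: any $g \in GL(2)$ intertwining two points of $Y$ must satisfy $g E_{12} = E_{12} g$, forcing $g = \bigl(\begin{smallmatrix} a & b \\ 0 & a \end{smallmatrix}\bigr)$, which then fixes every $\lambda_i$, so $Y$ meets each orbit in at most one point. The main obstacle --- flagged in the discussion preceding the proposition as the ``key point which requires some work'' --- is condition (ii). The plan for this step is a direct linearization of the relations $M_i M_j = 0$ at a point $y \in Y$, which I expect to yield $\dim T_y \rep_B(2) = \ell + 1$: the tangent entries should be forced to satisfy $c_i = 0$, $a_i = \lambda_i a_1$, and $d_i = -\lambda_i a_1$, leaving $a_1$ and the $b_i$ as the only free parameters. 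Combined with $\dim GL(2) \cdot y = 4 - \dim_\kk \End_B(M) = 2$, this yields codimension $\ell - 1 = \dim Y$, and Proposition \ref{prop:irrcomp} then produces $\overline{GL(2) \cdot Y}$ as an irreducible component of $\rep_B(2) = \rep_A(2)$. Because $\dim Y > 0$, the final clause of the proposition ensures this component has no dense orbit, completing the proof.
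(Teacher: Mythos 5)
Your proof is correct and follows the same overall strategy as the paper: the same reduction to the radical--square--zero quotient, the same $(\ell-1)$--parameter family $Y$ of two--dimensional representations built from scalar multiples of the fixed nilpotent $E_{12}$ (with $\lambda_1=1$), the same verification that $Y$ meets each orbit in at most one point, and the same application of Proposition \ref{prop:irrcomp} to conclude that $\overline{GL(2)\cdot Y}$ is a component without a dense orbit. The one genuine difference is how you verify hypothesis (ii). You propose to bound $\dim T_y(\rep_B(2))$ directly by linearizing the defining quadrics $M_iM_j=0$ at $y\in Y$; the computation you sketch ($c_i=0$, $a_i=\lambda_i a_1$, $d_i=-\lambda_i a_1$, with $a_1$ and the $b_i$ free) does give $\dim T_y(\rep_B(2))\leq \ell+1$, and subtracting $\dim GL(2)\cdot y = 4 - \dim_\kk\End_B(M) = 2$ yields codimension at most $\ell-1 = \dim Y$, exactly as required. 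The paper instead invokes the Artin--Voigt inequality (Lemma \ref{lem:AV}), reducing the same estimate to the computation $\dim_\kk\Ext^1_A(M_\lambda,M_\lambda)=n-1$, which is standard for radical--square--zero local algebras. Your route is more elementary in that it needs no homological input, at the cost of carrying out the explicit linearization; the paper's route packages the same content into one familiar invariant. Either way the argument closes.
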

\begin{proof}
A representation-finite algebra is of finite, thus also discrete, general representation type for elementary reasons.
For the converse, we will show that any representation-infinite local algebra $A=\kk Q/I$ is not of discrete representation type, by exhibiting an irreducible component of $\rep_A(2)$ which does not have a dense orbit.

We reduce to the case $I=\kk Q_{\geq 2}$ where $Q$ is the quiver with 1 vertex and $n \geq 2$ loops as follows: first, $Q$ must have only one vertex since $A$ is local, and at least 2 loops since $A$ is representation infinite.  
Since every representation of $A$ of dimension 2 has Loewy length at most 2, each such representation is annihilated by $\kk Q_{\geq 2}$. 
So we have $\rep_A(2)=\rep_{\kk Q/\kk Q_{\geq 2}}(2)$ as closed subvarieties of $\rep_Q(2)$, and one variety fails to have a dense orbit if and only if the other does.
(We remark they are not generally equal if we considered them as closed subschemes.)
Thus, we now take $I=\kk Q_{\geq 2}$ below.

For $\lambda=(\lambda_1, \dotsc, \lambda_{n-1})\in \kk^{n-1}$, consider the representation of $A$ given by the matrices:
\begin{equation}
    M_\lambda = \left(
    \begin{bmatrix}0 & 1\\0 & 0\end{bmatrix},\
    \begin{bmatrix}0 & \lambda_1\\0 & 0\end{bmatrix},\cdots, \begin{bmatrix}0 & \lambda_{n-1}\\0 & 0\end{bmatrix}\right).
\end{equation}
Then $Y:=\{M_\lambda\ \mid\ \lambda \in \kk^{n-1}\} \subset \rep_A(2)$ is a closed subvariety of dimension $n-1$.  Furthermore, it can be directly checked that $M_\lambda \not\simeq M_\mu$ for $\lambda \neq \mu$, so $Y$ intersects each $GL(2)$-orbit of $\rep_A(2)$ in just one point. It can also be computed that $\dim_\kk\Ext_A^1(M_\lambda, M_\lambda)=n-1$, so applying Lemma \ref{lem:AV} we get
\begin{equation}
\dim_{\kk} \frac{T_{M_\lambda}(\rep_A(2))}{T_{M_\lambda}(GL(2)\cdot {M_\lambda})} 
\leq 
\dim_\kk\Ext_A^1(M_\lambda, M_\lambda)
=n-1=\dim Y.
\end{equation}
Thus  \eqref{eq:tangent space inequality} holds here and Proposition \ref{prop:irrcomp} implies that $\overline{GL(2)\cdot Y}$ is an irreducible component of $\rep_A(2)$ without a dense orbit, since $n-1\geq 1 >0$.
\end{proof}

\section{Stratifying the representation varieties}
Throughout this section we fix $(m,n)$ and let $\Lambda:=\Lambda(m,n)$ as in \eqref{eq:mainalgebras}.  We also fix a dimension vector $\bd=(d_1,d_2)$.
A representation of $\Lambda$ is typically denoted by a triple $(A,B,C)$ of matrices of appropriate sizes.

\subsection{Background on partitions and nilpotent matrices}
For integers $d,e \geq 0$, we denote by $\mathcal{P}_e(d)$ the set of partitions of $d$ with parts of size at most $e$.
The elements of $\mathcal{P}_e(d)$ are in bijection with isomorphism classes of $\kk[T]/(T^e)$-modules of dimension $d$, where a partition
\begin{equation}
    \bp=(p_1, p_2, \dotsc, p_l), \qquad 0 \leq p_i \leq e, \quad p_i \geq p_{i+1}
\end{equation}
of $d$ corresponds to $\bigoplus_{i=1}^l \kk[T]/(T^{p_i})$.
In this case we say the module, or corresponding nilpotent matrix, \emph{has type $\bp$}.
We also use exponent notation for partitions when convenient, where an entry of $a^b$ in a partition means that a part of size $a$ appears $b$ times (with $b=0$ meaning that a part of size $a$ does not appear). 
We write $\ell(\bp)$ for the number of nonzero parts of $\bp$, and sometimes use the notation $\min(\bp):=p_{\ell(\bp)}$.

We use the dominance partial order on partitions, defined by
\begin{equation}
    \bp \leq \bq \quad \Longleftrightarrow \quad \sum_{i=1}^k p_i \leq \sum_{i=1}^k q_i \quad \forall k=1, 2, 3, \dotsc
\end{equation}
The geometric meaning of this partial order is as follows.
Writing $\cN_d$ for the closed subvariety of $\Mat_{d \times d}(\kk)$ consisting of nilpotent matrices, and $\cN_d^\bp$ for the locally closed subvariety of nilpotent matrices of type $\bp$,
a theorem of Gerstenhaber and Hesselink \cite{Hesselink76} states that
\begin{equation}
    \ol{\cN_d^\bp} \subseteq \ol{\cN_d^\bq}\quad \text{if and only if}\quad \bp \leq \bq.
\end{equation}

\subsection{From representations to matrices over $\kk[T]$}\label{sec:KT}
To make our notation more compact, we reduce from studying triples of matrices $(A,B,C)$ over $\kk$ to single matrices over the polynomial ring $\kk[T]$.
Fixing a $d_1\times d_1$ matrix $A_0$ and $d_2\times d_2$ matrix $B_0$ such that $A_0^m=B_0^n=0$,
we identify $(A_0, \kk^{d_1})$ with a $\kk\left[T\right] /\left(T^m\right)$-module $X$ and
$(B_0, \kk^{d_2})$ with a $\kk\left[T\right]/\left(T^n\right)$-module $Y$. 
The relation $B_0 C = C A_0$ is equivalent to $C$ being a $\kk[T]$-module homomorphism, which we still denote by $C\colon X\to Y$, 
and the relation $B_0^2 C=0$ is equivalent to $T^2 C=0$ with regards to the natural $\kk[T]$-module structure on $\Hom_{\kk[T]}(X,Y)$ (i.e.,
 $C \in \soc^2 \Hom_{\kk[T]}(X,Y)$).

We fix decompositions of $X$ and $Y$ into indecomposable summands:
\begin{equation}
X \cong \bigoplus_{j=1} ^ l J_{p_j}, \qquad 
Y \cong \bigoplus_{i=1} ^ m J_{q_i}
\end{equation}
where $J_k: =\kk[T]/(T^k)$ considered as a $\kk[T]$-module.
An element $C \in \Hom_{\kk[T]}(X,Y)$ then corresponds to a matrix over $\kk[T]$, where row $i$ is labeled by $J_{q_i}$ and column $j$ labeled by $J_{p_j}$.
We will call such a matrix with its row and column labels a \emph{($\kk[T]$-)labeled matrix}.
The entry in a row labeled by $J_l$ and column labeled by $J_k$ represents a $\kk[T]$-module homomorphism between cyclic modules $J_k \to J_l$, and is thus represented by some $f \in \kk[T]$.
The relation $b^2c = 0$ further requires that $f$ is annihilated by $T^2$, and thus $f$ can be taken of the form below, where $a, b \in \kk$:
\begin{equation}\label{eq:f}
    \begin{cases}
    f= a & \text{if }l=1 \\
    f= aT^{l-1} & \text{if }k=1 \\
    f= a T^{l-1} + b T^{l-2} & \text{if }l, k \geq 2.
    \end{cases}
\end{equation}

We now examine how the action of $\Aut_{\kk[T]}(X) \times \Aut_{\kk[T]}(Y)$ on $\Hom_{\kk[T]}(X,Y)$ translates to row and column operations on a $\kk[T]$-labeled matrix.
The following lemma is straightforward.
\begin{lemma}\label{lem:rowandcolumn}
The following operations correspond to actions of elements of the group $\Aut_{\kk[T]}(X) \times \Aut_{\kk[T]}(Y)$ on $\Hom_{\kk[T]}(X,Y)$, via change of basis:
\begin{enumerate}[(i)]
    \item multiplication of a row labeled $J_i$ by an invertible element of $\kk[T]/(T^i)$, and similarly for columns;
    \item row operations replacing a row labeled $J_j$ with the sum of that row and $f$ times any row labeled $J_i$,
    where $f \in \kk[T]$ if $j \leq i$ and $f \in (T^{j-i})$ if $j > i$;
    \item column operations replacing a column labeled $J_j$ with the sum of that row and $f$ times a column labeled $J_i$,
    where $f \in \kk[T]$ if $j \geq i$ and $f \in (T^{i-j})$ if $j < i$.
\end{enumerate}
Furthermore, every action of an element of $\Aut_{\kk[T]}(X) \times \Aut_{\kk[T]}(Y)$ on $\Hom_{\kk[T]}(X,Y)$ can be realized by a finite sequence of operations of the above form.
\end{lemma}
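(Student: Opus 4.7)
The plan is to prove the two assertions separately: the first by direct construction, the second by a Gaussian-elimination argument.

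For the first assertion, I would exhibit an explicit automorphism for each operation and verify that it induces the stated row/column operation on the labeled matrix. For (i), multiplication of a row labeled $J_i$ by a unit $u$ is induced by the automorphism of $Y$ acting as multiplication by $u$ on the given $J_i$-summand and the identity on all others; this is well-defined since $\Aut_{\kk[T]}(J_i) = (\kk[T]/(T^i))^\times$. For (ii), if $e$ and $e'$ are generators of summands of $Y$ labeled $J_i$ and $J_j$ respectively, define $\psi \colon Y \to Y$ by $e \mapsto e + f e'$ and the identity on other generators; extending by $\kk[T]$-linearity, $\psi$ is well-defined exactly when $T^i(e + f e') = 0$ in $Y$, i.e. $T^i f e' = 0$, equivalently $f \in (T^{\max(j - i, 0)})$---precisely the restriction in (ii). Such $\psi$ is automatically an automorphism (its inverse is $e \mapsto e - f e'$ and identity elsewhere), and a direct computation of $\psi \circ C$ shows that post-composition adds $f$ times the $J_i$-row of the labeled matrix to the $J_j$-row. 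Operation (iii) is produced analogously by an automorphism of $X$ acting via pre-composition.

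For the second assertion, since the acting group is a direct product it suffices to show that $\Aut_{\kk[T]}(X)$ is generated by the $X$-side analogues of the automorphisms above, and similarly for $\Aut_{\kk[T]}(Y)$. I would argue by Gaussian elimination. Order the summands of $X$ so that $p_1 \geq p_2 \geq \cdots \geq p_l$, and represent $\phi \in \Aut_{\kk[T]}(X)$ by its matrix $(\phi_{ij})$ with $\phi_{ij} \in \Hom_{\kk[T]}(J_{p_j}, J_{p_i})$. A direct analysis of these Hom-spaces shows $\phi_{ij} \in (T^{p_i - p_j})$ whenever $p_i > p_j$, so the matrix of constant terms is block lower-triangular. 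Since $\phi$ reduces modulo $T$ to an automorphism of $X/TX \cong \kk^l$ by Nakayama, each diagonal entry $\phi_{ii}$ must be a unit in $\kk[T]/(T^{p_i})$.

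The elimination then follows the usual pattern: scale the diagonal entries to $1$ using operations of type (i), and clear off-diagonal entries using transvections, working column by column. To clear an entry $\phi_{ij}$ with $i \neq j$ one uses, as its coefficient, a unit multiple of $-\phi_{ij}$ referring to row $j$. Entries below the diagonal ($i > j$, hence $p_i \leq p_j$) are cleared with unrestricted coefficients, while for entries above the diagonal ($i < j$, $p_i \geq p_j$) the coefficient must lie in $(T^{p_i - p_j})$---but this is exactly the ideal already containing $\phi_{ij}$, by the structural constraint recorded above. This reduces $\phi$ to the identity and expresses it as a product of elementary automorphisms; the argument for $\Aut_{\kk[T]}(Y)$ is identical. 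The main obstacle is precisely this ideal-matching check, which the ordering by decreasing size ensures automatically, so no further divisibility needs to be verified by hand.
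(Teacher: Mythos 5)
The paper labels this lemma as straightforward and omits its proof, so there is no argument to compare against; your write-up supplies the expected details. The first half is correct and complete: you exhibit the transvection $\psi$, show its well-definedness is precisely the condition $f \in (T^{\max(j-i,0)})$ matching the lemma's ideal constraint, and compute that post-composition effects the stated row operation. The reduction of the second assertion to generating each factor $\Aut_{\kk[T]}(X)$, $\Aut_{\kk[T]}(Y)$ by elementary automorphisms is also the right move.

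In the second half there is one genuine gap. After ordering so that $p_1 \geq \cdots \geq p_l$, you correctly note that the constant-term matrix of $\phi$ is block lower-triangular (blocks given by strings of equal parts) and that Nakayama forces its mod-$T$ reduction $\bar\phi$ to be invertible over $\kk$. But the inference ``each diagonal entry $\phi_{ii}$ must be a unit in $\kk[T]/(T^{p_i})$'' fails when $\bp$ has repeated parts: invertibility of a block lower-triangular matrix forces each diagonal \emph{block} to be invertible, not each diagonal \emph{entry} to be nonzero. With $p_1 = p_2$, for instance, the corresponding $2\times 2$ block of $\bar\phi$ could be $\left(\begin{smallmatrix} 0 & 1 \\ 1 & 0 \end{smallmatrix}\right)$, and then the first step of your elimination (scaling the diagonal to $1$ via type (i)) has nothing to act on. The fix is easy and stays within your toolbox: inside a block of equal parts the coefficients $f$ in transvections are unrestricted, so a row swap is realized as a product of three transvections and one scaling, all of types (i)--(ii). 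A preliminary pass of such swaps brings a unit pivot to each diagonal position, after which your elimination, including the ideal-matching check for entries above the diagonal, goes through verbatim.
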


\subsection{Stratifying the representation varieties}
We now begin to narrow down candidates for the irreducible components of the representation varieties $\rep_\Lambda(\bd)$.
Denote by $\rep_{\Lambda}^{\bp,\bq}(\bd)$ the locally closed subvariety of $\rep_{\Lambda}(\bd)$ consisting of points $(A,B,C)$ such that $(A,B) \in \cN_{d_1}^\bp \times \cN_{d_2}^\bq$, so we have a disjoint union
\begin{equation}\label{eq:stratify}
    \rep_\Lambda(\bd) = \coprod_{\bp,\bq} \rep_{\Lambda}^{\bp, \bq}(\bd).
\end{equation}  
For $(A,B) \in \cN_{d_1}^\bp \times \cN_{d_2}^\bq$, define
\begin{equation}\label{eq:Hdef}
 H(A,B) := \left\{C \in \Mat_{d_2 \times d_1}(\kk) \;|\; (A, B, C) \in \rep_{\Lambda}(\bd) \right \}.
\end{equation}
In the language of \S\ref{sec:KT}, we can identify $H(A,B)$ with a vector space
\begin{equation}\label{eq:Hsoc2}
H(A,B)\simeq \soc^2\Hom_{\kk[T]}(X, Y)=\setst{f\colon X\to Y}{T^2f=0}.
\end{equation}
Thus in algebraic geometry language, we have that $H(A,B)$ is isomorphic to the affine space $\mathbb{A}^d$ over the field $\kk$, where $d=\dim_\kk H(A,B)$.

\begin{lemma}
The variety $\ol{\rep_{\Lambda}^{\bp, \bq}(\bd)}$ is irreducible whenever it is nonempty, and each irreducible component of $\rep_\Lambda(\bd)$ is of this form for some $(\bp,\bq)$.
\end{lemma}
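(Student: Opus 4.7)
The plan is to prove the two assertions separately by exploiting the group action $G:=GL(\bd)$ on each stratum and then an elementary observation about finite covers by irreducible closed subsets.

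For irreducibility, fix a pair $(\bp,\bq)$ for which $\rep_{\Lambda}^{\bp,\bq}(\bd)$ is nonempty and choose once and for all a point $(A_0,B_0)\in \cN_{d_1}^{\bp}\times \cN_{d_2}^{\bq}$. I would then write down the morphism
\begin{equation}
\phi\colon G\times H(A_0,B_0)\longrightarrow \rep_{\Lambda}^{\bp,\bq}(\bd),\qquad (g,C)\longmapsto g\cdot (A_0,B_0,C).
\end{equation}
Since any two nilpotent matrices of the same Jordan type are conjugate, for any $(A,B,C)$ in the target one may pick $g\in G$ with $g\cdot (A,B)=(A_0,B_0)$, whence $g\cdot (A,B,C)=(A_0,B_0,C')$ for some $C'\in H(A_0,B_0)$, showing that $\phi$ is surjective. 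The domain is a product of $G$ (connected, hence irreducible) and $H(A_0,B_0)$, which by \eqref{eq:Hsoc2} is an affine space and so irreducible. A continuous image of an irreducible space is irreducible, and the closure of an irreducible set is irreducible, so $\overline{\rep_{\Lambda}^{\bp,\bq}(\bd)}$ is irreducible.

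For the second assertion, I would invoke the stratification \eqref{eq:stratify}: since $\bp$ ranges over $\mathcal{P}_m(d_1)$ and $\bq$ over $\mathcal{P}_n(d_2)$, both finite sets, we obtain a finite decomposition
\begin{equation}
\rep_{\Lambda}(\bd)=\bigcup_{(\bp,\bq)}\overline{\rep_{\Lambda}^{\bp,\bq}(\bd)}
\end{equation}
as a union of finitely many irreducible closed subvarieties. A standard fact from algebraic geometry then tells us that the irreducible components of $\rep_\Lambda(\bd)$ are precisely the maximal elements of this finite collection under inclusion; in particular, every irreducible component is of the form $\overline{\rep_{\Lambda}^{\bp,\bq}(\bd)}$ for some $(\bp,\bq)$.

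I do not anticipate a substantial obstacle here: the algebraic content lies entirely in the surjectivity of $\phi$, which follows from the classification of nilpotent matrices up to conjugacy together with the definition of $H(A_0,B_0)$. The subtler point to watch is ensuring one only uses the nonemptiness of the stratum (so that $(A_0,B_0)$ can be chosen with $H(A_0,B_0)\neq 0$ problems do not arise, since $H$ always contains $0$); once that is noted, the irreducibility argument via $\phi$ is routine, and the component statement is immediate from the finiteness of the stratifying index set.
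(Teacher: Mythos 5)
Your proof is correct, and it takes a genuinely different route from the paper's for the irreducibility claim. The paper fibers $\rep_{\Lambda}^{\bp,\bq}(\bd)$ over $\cN_{d_1}^{\bp}\times\cN_{d_2}^{\bq}$ via the projection $\pi(A,B,C)=(A,B)$, observes that the fibers $H(A,B)$ have constant dimension and that $\pi$ is locally trivial (so $\rep_{\Lambda}^{\bp,\bq}(\bd)$ is the total space of a vector bundle over an irreducible base), and concludes irreducibility by covering by open sets of the form $U_i\times\mathbb{A}^d$. You instead fix a single basepoint $(A_0,B_0)$, use transitivity of $GL(\bd)$ on each nilpotent Jordan stratum to get a \emph{surjective} morphism $\phi\colon GL(\bd)\times H(A_0,B_0)\to\rep_{\Lambda}^{\bp,\bq}(\bd)$, and quote that the image of an irreducible variety is irreducible. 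Your version is a bit more elementary: it sidesteps both the constancy-of-fiber-dimension remark and the local trivialization argument, at the cost of not exhibiting the vector bundle structure (which the paper does not use again for this lemma anyway). The second half — that irreducible components must be among the finitely many sets $\ol{\rep_{\Lambda}^{\bp,\bq}(\bd)}$, since a finite union of irreducible closed subsets has its components among the maximal ones — is the same in both. One small stylistic note: you should record explicitly that the image of $\phi$ lands in $\rep_{\Lambda}^{\bp,\bq}(\bd)$ and not merely in $\rep_{\Lambda}(\bd)$ (conjugation preserves Jordan type), but this is clear and not a gap.
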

\begin{proof}
In short, the irreducibility is because 
the projection map $\pi\colon \rep_{\Lambda}^{\bp, \bq}(\bd) \to \cN_{d_1}^\bp \times \cN_{d_2}^\bq$ realizes 
$\rep_{\Lambda}^{\bp, \bq}(\bd)$ as the total space of a vector bundle over an irreducible base variety, thus it is irreducible and so is its closure.

In more detail, $\pi$ has fiber over $(A,B)$ isomorphic to $H(A,B)$, by the definition \eqref{eq:Hdef}, which is of constant dimension depending only on $(\bp,\bq)$.
Also $\pi$ can be locally trivialized because $H(A,B)$ is the solution space to a system of linear equations whose coefficients depend linearly on the coordinates of the base space.  So $\pi$ is a vector bundle.  A variety is irreducible if and only if admits a covering by irreducible open subvarieties.  Taking $\{U_i\}_{i \in I}$ an open cover of the base which trivializes $\pi$, the covering 
$\{\pi^{-1}(U_i)\}_{i \in I}$ works since by definition of a trivialization each $\pi^{-1}(U_i)\simeq U_i \times \mathbb{A}^d$ where $d=\dim_\kk H(A,B)$ and $(A,B)$ is an arbitrary point of the base.  Each $U_i$ is irreducible since it is an open subvariety of the irreducible variety $\cN_{d_1}^\bp \times \cN_{d_2}^\bq$, and the product of two irreducible varieties is irreducible.  So each $\pi^{-1}(U_i)$ is irreducible.

Then from the decomposition into locally closed subvarieties \eqref{eq:stratify}, we have that the irreducible, closed subvarieties $\ol{\rep_{\Lambda}^{\bp, \bq}(\bd)}$ cover $\rep_\Lambda(\bd)$.
Thus the irreducible components of $\rep_\Lambda(\bd)$ are among these, giving the last statement of the lemma.
\end{proof}

Connecting with the $\kk[T]$-module language, denote by $Z_{A} \subseteq GL(d_1)$ and $Z_{B} \subseteq GL(d_2)$ the centralizers of $A$ and $B$ with respect to conjugation actions, respectively,
so that a stratum $\rep_{\Lambda}^{\bp,\bq}(\bd)$ has a dense $GL(\dd)$-orbit if and only if $H(A,B)$ has a dense $Z_{A} \times Z_{B}$ orbit. 
In the language of \S\ref{sec:KT}, this means that $\rep_{\Lambda}^{\bp, \bq}(\bd)$ has a dense $GL(\dd)$-orbit if and only if $\Hom_{\kk[T]}(X,Y)$ has a dense $\Aut_{\kk[T]}(X) \times \Aut_{\kk[T]}(Y)$-orbit.

Now we wish to further narrow down the pairs $(\bp,\bq)$ which can give rise to irreducible components.
Define $h(\bp,\bq)= \dim H(A,B)$ for any choice of $(A,B) \in \cN_{d_1}^\bp \times \cN_{d_2}^\bq$.
The following lemma is a variation of \cite[Lem.~3.5]{Bobinski21}, so we omit the proof.
The essential idea is that the total space of a vector bundle over an irreducible variety is irreducible, and the condition on the $h$ function allows us to view dense subsets of these strata as contained within the same vector bundle.
Below, we continue to use dominance order of partitions and the product order on pairs of partitions.

\begin{lemma}\label{lem:pqhcontainment}
If $(\bp',\bq')\leq (\bp,\bq)$ and $h(\bp,\bq)=h(\bp',\bq')$ then $\ol{\rep_{\Lambda}^{\bp', \bq'}(\bd)} \subseteq \ol{\rep_{\Lambda}^{\bp, \bq}(\bd)}$.
 \end{lemma}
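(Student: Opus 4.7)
The plan is to realize both strata $\rep_\Lambda^{\bp,\bq}(\bd)$ and $\rep_\Lambda^{\bp',\bq'}(\bd)$ as pieces of a single incidence variety over the closure $V := \ol{\cN_{d_1}^\bp \times \cN_{d_2}^\bq}$, and then use the fiber-dimension inequality for a dominant morphism together with the hypothesis $h(\bp,\bq) = h(\bp',\bq')$ to force the larger closure to swallow the smaller stratum. Note that by the Gerstenhaber--Hesselink theorem, the hypothesis $(\bp',\bq') \leq (\bp,\bq)$ tells us $\cN_{d_1}^{\bp'} \times \cN_{d_2}^{\bq'}$ is a locally closed subvariety of $V$, so the set-theoretic containment of bases is automatic; what requires argument is lifting this to the total spaces. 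This strategy mirrors the proof of \cite[Lem.~3.5]{Bobinski21}.

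Concretely, set
\[
\widetilde{\mathcal{H}} := \setst{(A,B,C) \in V \times \Mat_{d_2\times d_1}(\kk)}{BC - CA = 0 \text{ and } B^2 C = 0},
\]
a closed subvariety, and let $\pi\colon \widetilde{\mathcal{H}} \to V$ be the projection, whose fiber over $(A,B)$ is $H(A,B)$. The proof of the previous lemma already shows that $\pi^{-1}(\cN_{d_1}^\bp \times \cN_{d_2}^\bq) = \rep_\Lambda^{\bp,\bq}(\bd)$ is the total space of a vector bundle of rank $h(\bp,\bq)$ over an irreducible base, so its closure $\ol{\rep_\Lambda^{\bp,\bq}(\bd)}$ is an irreducible closed subvariety of $\widetilde{\mathcal{H}}$ of dimension exactly $\dim V + h(\bp,\bq)$. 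Restricting the projection, we obtain $\rho\colon \ol{\rep_\Lambda^{\bp,\bq}(\bd)} \to V$, whose image is closed and contains the dense open subset $\cN_{d_1}^\bp \times \cN_{d_2}^\bq$, hence is all of $V$.

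The key step is to apply the standard fiber-dimension inequality for a surjective morphism of irreducible varieties: every irreducible component of every nonempty fiber of $\rho$ has dimension at least $\dim \ol{\rep_\Lambda^{\bp,\bq}(\bd)} - \dim V = h(\bp,\bq)$. Now fix any $(A',B') \in \cN_{d_1}^{\bp'} \times \cN_{d_2}^{\bq'}$. The fiber $\rho^{-1}(A',B')$ is a closed subset of $H(A',B')$, which by \eqref{eq:Hsoc2} is an affine space of dimension $h(\bp',\bq') = h(\bp,\bq)$, in particular irreducible. Since each irreducible component of this closed subset has dimension at least $h(\bp,\bq) = \dim H(A',B')$, the fiber must coincide with the whole of $H(A',B')$.

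This shows $\rep_\Lambda^{\bp',\bq'}(\bd) \subseteq \ol{\rep_\Lambda^{\bp,\bq}(\bd)}$, and taking closures yields the lemma. The main obstacle is the delicate matching of dimensions: the fiber-dimension inequality only gives a useful one-sided bound, and it cuts in exactly the right direction here because we have packaged both strata inside the single irreducible variety $\ol{\rep_\Lambda^{\bp,\bq}(\bd)}$ and because the hypothesis $h(\bp,\bq) = h(\bp',\bq')$ makes the guaranteed lower bound on fiber dimension equal the dimension of the ambient affine space $H(A',B')$, forcing the equality needed to saturate the containment.
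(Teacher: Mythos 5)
The paper omits its own proof, giving only the one-sentence sketch that the strata live inside a common vector bundle, so your fiber-dimension argument is a legitimate way to flesh out that idea and your overall strategy is sound. However, there is one genuine gap: you assert that the image of $\rho\colon \ol{\rep_\Lambda^{\bp,\bq}(\bd)} \to V$ ``is closed,'' but this is not automatic. The ambient projection $V \times \Mat_{d_2\times d_1}(\kk) \to V$ is not proper (the matrix factor is affine, not complete), so the image of a closed subset under it need not be closed, and ``constructible and containing a dense open'' does not by itself give surjectivity. This is not a cosmetic issue: your fiber-dimension bound is vacuous over a point $(A',B')$ whose fiber is empty, so without nonemptiness the conclusion that $\rho^{-1}(A',B') = H(A',B')$ does not follow, and the whole argument collapses at exactly the strata you are trying to reach.

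The fix is easy and worth making explicit. The zero section $(\cN_{d_1}^\bp \times \cN_{d_2}^\bq)\times\{0\}$ lies in $\rep_\Lambda^{\bp,\bq}(\bd)$, and its closure in $V\times\Mat_{d_2\times d_1}(\kk)$ is $V\times\{0\}$; hence $V\times\{0\}\subseteq \ol{\rep_\Lambda^{\bp,\bq}(\bd)}$ and $\rho$ is surjective. With that inserted, the rest of the argument is correct: the fiber-dimension inequality for the dominant morphism $\rho$ between irreducible varieties forces every component of each nonempty fiber to have dimension at least $h(\bp,\bq)$, and since $\rho^{-1}(A',B')$ is a closed subset of the irreducible affine space $H(A',B')$ of that same dimension $h(\bp',\bq')=h(\bp,\bq)$, it must be all of $H(A',B')$, giving $\rep_\Lambda^{\bp',\bq'}(\bd)\subseteq\ol{\rep_\Lambda^{\bp,\bq}(\bd)}$.
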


Given a partition $\bp$, let $\ol{\bp}$ be the partition obtained by replacing each entry $p_i$ with $\min\{p_i, 2\}$.

\begin{lemma}\label{lem:pqbar}
For $(\bp,\bq)$ as above, $h(\bp,\bq)=h(\ol{\bp},\ol{\bq})$.
\end{lemma}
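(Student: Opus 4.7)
The plan is to compute $h(\bp,\bq)$ explicitly using the decomposition of $\Hom_{\kk[T]}$ into a direct sum, and show that the formula depends only on the truncated partitions $\ol{\bp}$ and $\ol{\bq}$.

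First I would use the identification \eqref{eq:Hsoc2} to rewrite $h(\bp,\bq) = \dim_{\kk} \soc^2 \Hom_{\kk[T]}(X,Y)$. Since $X = \bigoplus_{j} J_{p_j}$ and $Y = \bigoplus_i J_{q_i}$, additivity of $\Hom$ and of the socle filtration gives
\begin{equation}
h(\bp,\bq) = \sum_{i,j} \dim_{\kk} \soc^2 \Hom_{\kk[T]}(J_{p_j}, J_{q_i}).
\end{equation}

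The key computation is the standard fact that $\Hom_{\kk[T]}(J_k, J_l) \cong J_{\min(k,l)}$ as a $\kk[T]$-module (a homomorphism $J_k \to J_l$ is determined by the image of the generator, which must be annihilated by $T^k$ and can be any such element of $J_l$). Therefore
\begin{equation}
\dim_{\kk} \soc^2 \Hom_{\kk[T]}(J_{p_j}, J_{q_i}) = \min(2, p_j, q_i),
\end{equation}
and we obtain the clean formula $h(\bp,\bq) = \sum_{i,j} \min(2, p_j, q_i)$.

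The conclusion is then immediate from the elementary identity $\min(2, p_j, q_i) = \min(2, \min(p_j,2), \min(q_i,2))$, since the right side is exactly $\min(2, \ol{p}_j, \ol{q}_i)$. Summing over all pairs gives $h(\bp,\bq) = h(\ol{\bp}, \ol{\bq})$. There is no real obstacle here; the only thing to double-check is the compatibility of the identification $H(A,B) \simeq \soc^2 \Hom_{\kk[T]}(X,Y)$ with the direct sum decomposition, which follows directly from the $\kk[T]$-linearity of the socle filtration. The formulas in \eqref{eq:f} also confirm the count locally: in each block one contributes $1$ parameter when $\min(p_j,q_i)=1$ and $2$ parameters when $\min(p_j,q_i)\geq 2$, which agrees with $\min(2,p_j,q_i)$.
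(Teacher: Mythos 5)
Your proof is correct and takes essentially the same approach as the paper: reduce to the single-part case via additivity of $\Hom_{\kk[T]}$ over direct sums, compute $\dim_\kk \soc^2\Hom_{\kk[T]}(J_{p_j},J_{q_i}) = \min(2,p_j,q_i)$, and observe this is invariant under truncating parts at 2. The only difference is that you make the intermediate identification $\Hom_{\kk[T]}(J_k,J_l)\cong J_{\min(k,l)}$ explicit, which the paper folds into a single line.
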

\begin{proof}
For partions with one part, it follows from the description \eqref{eq:Hsoc2} with $X=J_{p_1}$ and $Y=J_{q_1}$ that
\begin{equation}
    h((p_1), (q_1))=\min\{2,\dim\Hom_{\kk[T]}(J_{p_1},J_{q_1})\}
    =\min\{2,p_1,q_1\}=h(\ol{(p_1)},\ol{(q_1)}).
\end{equation} 
Then since $\Hom_{\kk[T]}(X,Y)$ distributes over direct sums, we have
\begin{equation}
    h(\bp,\bq)=\sum_{i,j}h((p_i), (q_j))
    =\sum_{i,j} h(\ol{(p_i)},\ol{(q_j)})=h(\ol{\bp},\ol{\bq}).
    \qedhere
\end{equation}
\end{proof}

\section{Proof of the main theorem} 
\subsection{Overview}
We now set out to prove Theorem \ref{thm:main} using the following general strategy.
Assume $\ol{\rep_{\Lambda}^{\bp, \bq}(\bd)}$ is an irreducible component of $\rep_\Lambda(\bd)$.
We use Lemma \ref{lem:pqhcontainment} when possible to eliminate $(\bp,\bq)$ which do not give rise to irreducible components.
For the remaining $(\bp,\bq)$, we reframe the problem in terms of matrices over $\kk[T]$ as described in Section \ref{sec:KT}.
There we do direct calculations to show that one of two things occurs for each remaining $\rep_{\Lambda}^{\bp, \bq}(\bd)$: either a general representation is decomposable, in which case no further consideration is necessary by Corollary \ref{cor:indecompcomponent},
or $\rep_{\Lambda}^{\bp, \bq}(\bd)$ has a dense orbit.  We accomplish the latter by showing a general element of $\rep_{\Lambda}^{\bp, \bq}(\bd)$ can be reduced to a unique normal form.
These considerations are enough to show that each indecomposbale irreducible component (and possibly other strata) of each $\rep_\Lambda(\bd)$ has a dense orbit, so $\Lambda(m,n)$ is of discrete general representation type.

To finish proving the theorem, it remains to see that there are actually only finitely many indecomposable irreducible components with dense orbits.
We will use further reductions on which $(\bp,\bq)$ give rise to indecomposable irreducible components to show that there is only a finite list of candidates for these.

\begin{remark}
A similar technique was used in \cite{CKW15} but that proof is simpler because \emph{every stratum} for those algebras has a dense orbit.  So containment of closures of strata and potential irreducible components did not need to be considered.  In the algebras considered here, it is not true that every stratum has a dense orbit, so our proof requires additional tools from algebraic geometry to consider strata closure containment.
\end{remark}

\subsection{Initial reductions on $(\bp,\bq)$}
The following lemma is immediate from the description of the algebras we study in \eqref{eq:mainalgebras} and the definition of representation varieties.
Since we are working with $(m,n)$ arbitrary, this allows us to interchange $\bp$ and $\bq$ whenever convenient.

\begin{lemma}\label{lem:duality}
Vector space duality induces an isomorphism of varieties
\begin{equation}
    \rep_{\Lambda(m,n)}^{\bq, \bp}(d_1,d_2) \simeq \rep_{\Lambda(n,m)}^{\bp, \bq}(d_2,d_1).
\end{equation}
which is equivariant with respect to the automorphism of $GL(\bd)$ sending $(g_1,g_2)\mapsto(g_2^{-t},g_1^{-t})$.
Thus one stratum has a dense orbit if and only if the other does, and furthermore a general point of one stratum corresponds to an indecomposable representation if and only if the other does.
\end{lemma}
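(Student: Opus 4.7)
The plan is to construct $\Phi$ explicitly via matrix transposition. Define
\begin{equation*}
\Phi\colon \rep_{\Lambda(m,n)}(d_1,d_2) \to \rep_{\Lambda(n,m)}(d_2,d_1), \qquad (A,B,C) \mapsto (B^t, A^t, C^t).
\end{equation*}
This is evidently a morphism of varieties, and its inverse is the analogously defined map in the other direction, so $\Phi$ will be an isomorphism of varieties provided it is well-defined. The main work is therefore to check that if $(A,B,C)$ satisfies the defining relations of $\Lambda(m,n)$, then $(B^t, A^t, C^t)$ satisfies those of $\Lambda(n,m)$. The relations $(B^t)^n=0$ and $(A^t)^m=0$ are immediate, and transposing $CA-BC=0$ yields $A^t C^t - C^t B^t = 0$, which is the ``$ca=bc$'' relation in the target (reading $B^t$ as the loop at vertex $1$ of $\Lambda(n,m)$ and $A^t$ as the loop at vertex $2$). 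The only subtle point is the remaining relation $b^2c=0$: in the target this demands $(A^t)^2 C^t = 0$, equivalently $CA^2=0$, which is not an explicit defining relation of $\Lambda(m,n)$ but follows from the derivation
\begin{equation*}
CA^2 = (CA)A = (BC)A = B(CA) = B(BC) = B^2 C = 0.
\end{equation*}

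Next I would track the Jordan types through $\Phi$. Since a matrix and its transpose have the same Jordan type, $A$ of type $\bq$ yields $A^t$ of type $\bq$, now playing the role of the loop at vertex $2$ of $\Lambda(n,m)$; similarly $B$ of type $\bp$ yields $B^t$ of type $\bp$ at vertex $1$. This gives $\Phi\bigl(\rep_{\Lambda(m,n)}^{\bq,\bp}(d_1,d_2)\bigr) = \rep_{\Lambda(n,m)}^{\bp,\bq}(d_2,d_1)$, establishing the claimed isomorphism of strata. For the equivariance statement, a direct computation using $(XY)^t=Y^tX^t$ and $(g^{-1})^t=(g^t)^{-1}$ gives
\begin{equation*}
\Phi(g_1 A g_1^{-1},\, g_2 B g_2^{-1},\, g_2 C g_1^{-1}) = \bigl(g_2^{-t} B^t g_2^t,\, g_1^{-t} A^t g_1^t,\, g_1^{-t} C^t g_2^t\bigr),
\end{equation*}
which is precisely the action of $(g_2^{-t}, g_1^{-t}) \in GL(d_2)\times GL(d_1)$ on $(B^t, A^t, C^t)$.

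The final two consequences follow formally. An equivariant isomorphism carries $GL$-orbits bijectively to $GL$-orbits while preserving density and closure; and matrix transposition preserves direct sum decompositions, so $\Phi$ preserves (in)decomposability. I do not anticipate any genuine obstacle: the lemma is essentially a bookkeeping verification, the only non-obvious point being the derivation $CA^2 = B^2 C$ from $BC=CA$, which is what allows the apparently asymmetric relation $b^2c=0$ to be respected by duality once the roles of the two loops are swapped.
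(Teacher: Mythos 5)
Your proof is correct and is essentially the argument the paper has in mind: the paper states only that the lemma ``is immediate from the description of the algebras and the definition of representation varieties,'' without writing anything out, and your explicit transposition map $(A,B,C)\mapsto(B^t,A^t,C^t)$ is exactly the intended vector-space duality. You have also correctly isolated the one genuinely non-obvious step, namely that the asymmetric relation $b^2c=0$ is respected because $CA^2 = B^2C = 0$ follows from $CA=BC$; equivalently, the ideals $(a^m,b^n,ca-bc,b^2c)$ and $(a^m,b^n,ca-bc,ca^2)$ of $\kk Q$ coincide, which is what makes $\Lambda(m,n)^{\mathrm{op}}\cong\Lambda(n,m)$ under the obvious vertex swap. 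The Jordan-type bookkeeping, the equivariance computation, and the deduction of the final two consequences (orbit density and indecomposability are transported by an equivariant isomorphism) are all correct.
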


When both, or neither, of $\bp, \bq$ have a part of size 1, then the situation is relatively straightforward, so we treat these first.

\begin{lemma}\label{lem:pqboth1}
Suppose $\min(\bp)=\min(\bq)=1$.  Then a general representation in $\rep_{\Lambda}^{\bp, \bq}(\bd)$ has a direct summand of dimension vector $(1,1)$, represented by the labeled matrix
\begin{equation}
\bordermatrix{& J_1 \cr 
 J_1 & 1 \cr 
 }.
\end{equation}
\end{lemma}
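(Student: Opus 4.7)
The plan is to use the $\kk[T]$-module formalism from Section~\ref{sec:KT} and exploit the $J_1$ summands in $X$ and $Y$ guaranteed by the hypothesis $\min(\bp)=\min(\bq)=1$. Writing $X = X' \oplus J_1$ and $Y = Y' \oplus J_1$, the labeled matrix representing $C$ has a distinguished row, say row $i_0$, labeled $J_1$ coming from $Y$, and a distinguished column, say column $j_0$, labeled $J_1$ coming from $X$. By the classification in~\eqref{eq:f}, the entry at position $(i_0, j_0)$ is simply a scalar $a\in\kk$. Since $\rep_{\Lambda}^{\bp,\bq}(\bd)$ is a vector bundle over $\cN_{d_1}^{\bp}\times \cN_{d_2}^{\bq}$ whose fibers contain this entry as one of the linear coordinates on $H(A,B)$, the locus $\{a\neq 0\}$ is dense open. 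Thus for a general representation we may assume $a\neq 0$ and, after rescaling the $J_1$-row (or $J_1$-column), normalize $a=1$.

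With the $(i_0, j_0)$ entry equal to $1$, I would then perform row and column operations sanctioned by Lemma~\ref{lem:rowandcolumn} to clear every other entry in row $i_0$ and column $j_0$. For each column $j\neq j_0$ labeled $J_{p_j}$, the entry at $(i_0, j)$ is a scalar $\alpha_j\in\kk$ by~\eqref{eq:f}; subtracting $\alpha_j$ times column $j_0$ kills it, and this operation is allowed since the column-label condition (adding a $J_1$-labeled column to a $J_{p_j}$-labeled column with $p_j\geq 1$) places no restriction on the scalar. For each row $i\neq i_0$ labeled $J_{q_i}$, the entry at $(i, j_0)$ has the form $\beta_i T^{q_i-1}$ by~\eqref{eq:f}; subtracting $\beta_i T^{q_i-1}$ times row $i_0$ kills it, and this operation is permitted because the lemma's restriction $f \in (T^{q_i-1})$ when $q_i > 1$ is exactly the shape of $\beta_i T^{q_i-1}$, while the case $q_i=1$ imposes no restriction. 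Crucially, the row operations do not undo the previous column clearing: a row operation on row $i$ only alters row $i$, leaving row $i_0$ untouched, and the back-contributions to positions $(i, j)$ for $j\neq j_0$ are multiples of the already-zero entries $(i_0, j)$, so they vanish.

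Once the clearing is complete, the labeled matrix decomposes block-diagonally into a $1\times 1$ block equal to $1$ at position $(i_0, j_0)$ and the complementary submatrix indexed by $X'$ and $Y'$. Since these row/column operations arise from $\kk[T]$-module automorphisms of $X$ and $Y$, the representation after reduction is isomorphic to the original, so the original representation is isomorphic to the direct sum of $(J_1, J_1, 1)$ and the representation on $(X', Y')$ given by the complementary block. This is precisely the claim. The only genuine obstacle is the genericity of $a\neq 0$, which is immediate from the vector-bundle structure; all subsequent reductions are routine bookkeeping with Lemma~\ref{lem:rowandcolumn}, and the non-interference of the two batches of operations is the one point where a brief verification is needed.
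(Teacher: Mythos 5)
Your proof is correct and follows essentially the same approach as the paper: locate the $J_1\times J_1$ entry, observe that it is nonzero on a dense open subset of $\rep_{\Lambda}^{\bp,\bq}(\bd)$, and then use the row and column operations of Lemma~\ref{lem:rowandcolumn} to clear the rest of the distinguished $J_1$-row and $J_1$-column, so that a $(1,1)$-dimensional summand splits off. The paper's proof does exactly this (clearing leftward with column operations and upward with row operations), omitting the explicit normalization $a=1$ and the non-interference check, both of which you spell out but which are routine.
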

\begin{proof}
Taking a $\kk[T]$-labeled matrix $C$ corresponding to a general point of $\rep_{\Lambda}^{\bp, \bq}(\bd)$, we can find a row and column both labeled $J_1$ such that the corresponding entry of $C$ is nonzero (say the bottom row and rightmost column).
Then using row and column operations as in Lemma \ref{lem:rowandcolumn}, we can clear all the entries to the left and above this entry:
\begin{equation}
\bordermatrix{&  &  &  & J_1 \cr 
\vdots &\ddots & \vdots& \vdots & \vdots \cr 
J_r & \cdots &*& * & b T^{r-1} \cr 
\vdots &\ddots &  *&  * & \vdots  \cr 
J_1 & \cdots & a_{2} & a_{1} & 1 \cr 
}
\rightsquigarrow \bordermatrix{&  &  &  & J_1 \cr 
&\ddots & \vdots& \vdots & \vdots \cr 
& \cdots &*& * & 0 \cr 
&\cdots &  *&  * & 0  \cr 
J_1 & \cdots & 0 & 0 & 1 \cr 
}
\end{equation}
and thus see a direct summand of dimension vector $(1,1)$ splits off.
In more detail: as in \eqref{eq:f} all the entries in the bottom row are represented by scalars.  Since the rightmost column is labeled $J_1$, Lemma \ref{lem:rowandcolumn}(ii) says its scalar multiples can be added to each column to its left to clear them all. 
Similarly, the entry in a row labeled $J_r$ of the rightmost column can be represented by $bT^{r-1}$ with $b \in \kk$. 
Then Lemma \ref{lem:rowandcolumn}(ii) says that $-bT^{r-1}$ times the bottom row can be added to this row, and like this we clear all terms above the bottom one in the rightmost column.
\end{proof}

\begin{lemma}\label{lem:pqgreater1}
Suppose $k:=\min(\bp)>1$ and $l:=\min(\bq)>1$.  Then a general representation in $\rep_{\Lambda}^{\bp, \bq}(\bd)$ has a direct summand of dimension vector $(k,l)$, represented by the labeled matrix
\begin{equation}
\bordermatrix{& J_k \cr 
 J_l & T^{l-2} \cr 
 }.
\end{equation}

\end{lemma}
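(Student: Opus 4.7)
The plan is to imitate the approach of Lemma \ref{lem:pqboth1}. After translating to $\kk[T]$-labeled matrices as in Section \ref{sec:KT}, I would locate a distinguished pivot entry of a general representation $C$, normalize it to the value $T^{l-2}$ using the scaling of Lemma \ref{lem:rowandcolumn}(i), and then clear the rest of its row and column using the row and column operations of Lemma \ref{lem:rowandcolumn} to expose a direct summand of dimension $(k,l)$.

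Concretely, fix a row $r^*$ labeled $J_l$ and a column $c^*$ labeled $J_k$, which exist since $l = \min(\bq)$ and $k = \min(\bp)$. Because $k, l \geq 2$, the $(r^*, c^*)$-entry of $C$ has the form $f = bT^{l-2} + aT^{l-1}$ by the last case of \eqref{eq:f}. On the irreducible stratum $\rep_\Lambda^{\bp,\bq}(\bd)$, the coefficient $b$ is a free linear coordinate on the fiber of the vector bundle $\rep_\Lambda^{\bp,\bq}(\bd) \to \cN_{d_1}^\bp \times \cN_{d_2}^\bq$, so $b \neq 0$ defines a dense open condition that I would take as the meaning of ``general'' here. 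A direct computation shows that multiplying column $c^*$ by a suitable unit of $\kk[T]/(T^k)$, which is allowed by Lemma \ref{lem:rowandcolumn}(i), replaces $f$ with exactly $T^{l-2}$.

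Next I would clear the remaining entries of row $r^*$ using column operations, then clear the remaining entries of column $c^*$ using row operations. For any column $c'$ labeled $J_{k'}$, the inequality $k' \geq k$ means Lemma \ref{lem:rowandcolumn}(iii) imposes no divisibility constraint on the multiplier, so an arbitrary element of $\kk[T]$ can be chosen to cancel the $(r^*, c')$-entry (which already lies in the $\kk$-span of $T^{l-2}$ and $T^{l-1}$). For any row $r'$ labeled $J_{l'}$, the inequality $l' \geq l$ means Lemma \ref{lem:rowandcolumn}(ii) permits adding $f'' \cdot (\text{row } r^*)$ to row $r'$ for any $f'' \in (T^{l'-l})$, and such an $f''$ suffices to cancel the $(r', c^*)$-entry precisely because that entry already lies in $(T^{l'-2}) = (T^{l'-l}) \cdot (T^{l-2})$ modulo $T^{l'}$.

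The subtle point requiring care, and the main obstacle to verify, is the order of the two clearing steps. Doing all the column operations first leaves row $r^*$ with zeros outside $c^*$; once this is achieved, the subsequent row operations --- each of which adds some multiple of row $r^*$ to another row --- cannot disturb those zeros, and affect only column $c^*$ of the rows being modified. The resulting matrix is block-diagonal with a $1 \times 1$ block equal to $T^{l-2}$ in position $(r^*, c^*)$, which exhibits the claimed $(k,l)$-dimensional direct summand.
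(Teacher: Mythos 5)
Your proof is correct and follows essentially the same approach as the paper: locate a pivot in a row labeled $J_l$ and a column labeled $J_k$, use genericity to ensure its $T^{l-2}$-coefficient is nonzero, normalize the pivot to $T^{l-2}$, and then clear the pivot's row by column operations before clearing its column by row operations. The only cosmetic difference is that the paper scales each column (resp.\ row) to normalize every entry of the pivot's row (resp.\ column) before subtracting the pivot column (resp.\ row), whereas you keep just the pivot normalized and pick bespoke multipliers $f$, $f''$ for each clearing step; the two procedures are interchangeable.
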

\begin{proof}
Taking a $\kk[T]$-labeled matrix $C$ corresponding to a general point of 
$\rep_{\Lambda}^{\bp, \bq}(\bd)$, we can find a row labeled $J_l$ and column labeled $J_k$ such that the corresponding entry of $C$ is nonzero (say the bottom row and rightmost column). 
Since both $k, l > 1$ and $C$ is a general point, this entry is not annihilated by $T$.
Applying automorphisms to each column as in Lemma \ref{lem:rowandcolumn}(i), we can make every entry in the bottom row $T^{l-2}$, then clear them with column operations as in the previous lemma.
Similarly, applying automorphisms to each row as in Lemma \ref{lem:rowandcolumn}(i),
we can make the rightmost entry in any row, say labeled $J_r$, to be $T^{r-2}$.  We can then clear the rightmost entry in such a row as in the previous lemma.

\begin{equation}
\bordermatrix{&  &  &  & J_k \cr 
&\ddots & \vdots& \vdots & \vdots \cr 
J_{r}&\cdots &  *&  * & a_1 T^{r-2} + b_1 T^{r-1} \cr 
& \ddots &*& * & * \cr 
J_{l} & \cdots & * & a_2 T^{l-2} + b_2 T^{l-1} & T^{l-2}  } 
\rightsquigarrow
\bordermatrix{&  &  &  & J_k \cr 
&\ddots & \vdots& \vdots & \vdots \cr 
J_{r}&\cdots &  *&  * &  0 \cr 
& \ddots &*& * & 0 \cr 
J_{l} & \cdots & 0 & 0 & T^{l-2}  }
\end{equation}
In this case, a direct summand of dimension vector $(k,l)$ splits off.
\end{proof}

These two lemmas reduce us to the case that exactly one of $\min(\bp), \min(\bq)$ is equal to 1, and using Lemma \ref{lem:duality}, we can take $\min(\bp)=1$, $\min(\bq)>1$ without loss of generality from here on.
With this, we have another case of a stratum where a general representation is decomposable. The proof is similar to those of Lemmas \ref{lem:pqboth1} and \ref{lem:pqgreater1}, so we omit it.

\begin{lemma}\label{lem:21decomp}
Let $(\bp,\bq)$ be such that $\min(\bp)=1$, $r:=\min(\bq)>1$, and assume $p_k=2$ for some $k$.
Then a general representation in $\rep_{\Lambda}^{\bp, \bq}(\bd)$ has a direct summand of dimension vector $(2,r)$, represented by the labeled matrix
\begin{equation}
\bordermatrix{& J_2 \cr 
 J_r & T^{r-2} \cr 
 }.
\end{equation}
\end{lemma}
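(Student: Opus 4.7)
The plan is to mimic the proofs of Lemmas \ref{lem:pqboth1} and \ref{lem:pqgreater1} after translating to the labeled matrix language of \S\ref{sec:KT}. Fixing a basis that puts $(A,B)$ in Jordan form of types $(\bp,\bq)$, the representation is encoded by a $\kk[T]$-labeled matrix $C$ whose entries follow \eqref{eq:f}. The hypothesis $p_k=2$ provides a column of $C$ labeled $J_2$, and $r=\min(\bq)>1$ provides a row labeled $J_r$ with $r\geq 2$. By \eqref{eq:f} applied with $k=2$ and $l=r$, the entry $e$ at their intersection has the form $e=aT^{r-1}+bT^{r-2}$ with $b$ a free parameter; hence $b\neq 0$ at a general point of $\rep_{\Lambda}^{\bp,\bq}(\bd)$.

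Having selected this row and column, I would first scale the column by a unit of $\kk[T]/T^2$ via Lemma \ref{lem:rowandcolumn}(i) to normalize $e$ to $T^{r-2}$, and then use the normalized entry to clear the remaining entries of the chosen row and column. For another row labeled $J_s$ (necessarily $s\geq r$), the entry in the chosen column is $a_sT^{s-1}+b_sT^{s-2}$, and adding $-(b_s+a_sT)T^{s-r}$ times the chosen row kills it; this multiplier lies in $(T^{s-r})$ exactly as required by Lemma \ref{lem:rowandcolumn}(ii). Symmetrically, for another column labeled $J_k$, the entry in the chosen row is $a'T^{r-1}+b'T^{r-2}$ when $k\geq 2$ (cleared by the column operation with $f=-b'-a'T\in\kk[T]$) or $a'T^{r-1}$ when $k=1$ (cleared by $f=-a'T\in(T)$), each admissible under Lemma \ref{lem:rowandcolumn}(iii). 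After these operations, the chosen row and column of $C$ vanish away from the single entry $T^{r-2}$ at their intersection, which together with the block-diagonal form of $(A,B)$ exhibits the advertised $(2,r)$-dimensional direct summand.

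The main subtlety is routine bookkeeping: checking in each case that the clearing multiplier lies in the ideal prescribed by Lemma \ref{lem:rowandcolumn}, observing that the case $s<r$ does not arise because $r=\min(\bq)$, and confirming that all intermediate matrices still represent elements of $\soc^2\Hom_{\kk[T]}(X,Y)$ so that we remain in $\rep_{\Lambda}^{\bp,\bq}(\bd)$ throughout. Once these case-by-case verifications are carried out, the argument runs in close parallel to the preceding two lemmas, and I expect no obstacles beyond the careful case split dictated by whether $k=1$ or $k\geq 2$ and whether $s=r$ or $s>r$.
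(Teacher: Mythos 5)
The paper omits this proof, noting only that it is ``similar to those of Lemmas \ref{lem:pqboth1} and \ref{lem:pqgreater1}.'' Your argument correctly fills in exactly those details in the manner the paper intends---select the $J_2$-column/$J_r$-row entry, observe $b\neq 0$ generically, normalize to $T^{r-2}$ via Lemma \ref{lem:rowandcolumn}(i), and clear the remaining entries of that row and column with the admissible row/column operations of Lemma \ref{lem:rowandcolumn}(ii),(iii), with the multipliers lying in the required ideals in every case---so it is correct and essentially the same approach.
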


With this, we can finally make a dramatic reduction in the possible $(\bp,\bq)$ giving rise to indecomposable irreducible components.

\begin{proposition}\label{prop:pqreduction}
Suppose $(\bp,\bq)$ is such that $\ol{\rep_{\Lambda}^{\bp, \bq}(\bd)}$ is an indecomposable irreducible component of $\rep_\Lambda(\bd)$, and that exactly one of $\bp$ or $\bq$ has a part of size 1.
Then, interchanging $\bp$ and $\bq$ if necessary, the partitions are of the form
\begin{equation}\label{eq:psimplify}
    \bp=(m^{a_m},k^{a_k},1),\qquad 
    a_k \in \{0,1\},\ a_m \in \ZZ_{\geq 0}
\end{equation}
for some $k$ with $2 < k < m$, and
\begin{equation}\label{eq:qsimplify}
    \bq=(n^{b_n},l^{b_l},2^{b_2}), \qquad b_l \in\{0,1\},\ b_2, b_n \in \ZZ_{\geq 0}
\end{equation}
for some $l$ with $2 < l < n$.
\end{proposition}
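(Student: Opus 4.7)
The plan is to narrow $(\bp,\bq)$ through a sequence of reductions. By Lemma \ref{lem:duality}, I may assume without loss of generality that $\min(\bp)=1$ and $\min(\bq)>1$. Lemma \ref{lem:21decomp} then forces $\bp$ to have no part equal to $2$: otherwise a general representation would have a direct summand of dimension vector $(2,\min(\bq))$, contradicting the indecomposable-component hypothesis.

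Next I would apply Lemmas \ref{lem:pqhcontainment} and \ref{lem:pqbar} in tandem. Lemma \ref{lem:pqbar} says $h(\bp,\bq)$ depends only on the counts $(u_1,u_2)$ of parts equal to $1$ and parts $\geq 2$ in each of $\bp$ and $\bq$. So if $\bp$ were not dominance-maximal within its $(u_1,u_2)$-class, then some $\bp'>\bp$ with the same counts would yield $\ol{\rep_\Lambda^{\bp,\bq}(\bd)}\subsetneq \ol{\rep_\Lambda^{\bp',\bq}(\bd)}$ by Lemma \ref{lem:pqhcontainment}, violating the irreducible-component assumption; the same applies to $\bq$. A direct combinatorial check (pack as many $M$'s as possible, then fill with $2$'s) shows that the dominance-maximum of the partitions of $N$ into $j$ parts, each in $[2,M]$, has the form $(M^{a},r^{\epsilon},2^{b})$ with $\epsilon\in\{0,1\}$ and $r\in(2,M)$ when $\epsilon=1$. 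Applied to $\bp$ (with $u_1$ extra $1$'s appended) and $\bq$ (with no $1$'s), together with the no-$2$'s conclusion for $\bp$, this gives
\begin{equation*}
\bp=(m^{a_m},r^{a_r},1^{u_1}),\qquad \bq=(n^{b_n},l^{b_l},2^{b_2})
\end{equation*}
with $a_r,b_l\in\{0,1\}$ and $r\in(2,m)$, $l\in(2,n)$.

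The final step is to prove $u_1=1$, which I would establish by contrapositive: assuming $u_1\geq 2$, exhibit a direct-summand decomposition of a general representation in $\rep_\Lambda^{\bp,\bq}(\bd)$, contradicting indecomposability. In the $\kk[T]$-labeled matrix setup of \S\ref{sec:KT}, the $u_1$ columns labeled $J_1$ give $u_1$ vectors in $\soc(Y)\simeq\kk^{\ell(\bq)}$. When $u_1>\ell(\bq)$ they are linearly dependent, so the $GL_{u_1}$ subgroup of $\Aut(X)$ can zero one of them, exhibiting an explicit $J_1\to 0$ direct summand. When $u_1\leq\ell(\bq)$ the vectors are generically linearly independent and the argument is more delicate: one builds a twisted $(J_1,J_{q_i},T^{q_i-1})$ direct summand by taking $X_2=\kk(\beta j_1^{(a)}+\gamma j_1^{(b)})\subset X$ to be a $J_1$ spanned by a suitable linear combination of two $J_1$-generators, and $Y_2\subset Y$ to be a cyclic $J_{q_i}$-summand whose generator has constant term matching the $\soc(Y)$-image of $X_2$. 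The main obstacle is verifying that complementary summands $X_1\subset X$, $Y_1\subset Y$ can be generically chosen so that $C(X_1)\subseteq Y_1$; this reduces to a linear-algebra transversality check which, though routine once the ansatz for $X_2, Y_2$ is in place, requires careful bookkeeping of parameters.
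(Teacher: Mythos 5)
Your first two reductions match the paper's proof exactly: duality to put $\min(\bp)=1$, $\min(\bq)>1$; Lemma~\ref{lem:21decomp} to rule out parts of size $2$ in $\bp$; and then the dominance-maximization argument combining Lemmas~\ref{lem:pqbar} and~\ref{lem:pqhcontainment} to get $\bp=(m^{a_m},k^{a_k},1^{u_1})$, $\bq=(n^{b_n},l^{b_l},2^{b_2})$.

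The divergence is at the last step, $u_1=1$, and here there is a real gap. You parametrize the $h$-invariance classes by the pair $(u_1,u_2)$ of counts of parts equal to $1$ versus parts $\geq 2$, so in your maximization the number of $1$'s is held fixed. But Lemma~\ref{lem:pqbar} gives strictly more freedom than that: since $\min(\bq)>1$, the formula $h((p_1),(q_1))=\min\{2,p_1,q_1\}$ shows $h(\bp,\bq)=\ell(\bq)\cdot\sum_i\min\{p_i,2\}$, which depends only on $u_1+2u_2$. So one may trade two parts of size $1$ for one part of size $2$ without changing $h$, while $(2)>(1,1)$ in dominance and $|\bp|$ is preserved. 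That is exactly what the paper does: if $a_1\geq 2$, coalesce the $1$'s pairwise into $2$'s to get $\bp'>\bp$ in dominance with $h$ unchanged; Lemma~\ref{lem:pqhcontainment} then gives $\ol{\rep_\Lambda^{\bp,\bq}(\bd)}\subseteq\ol{\rep_\Lambda^{\bp',\bq}(\bd)}$ (strictly, since the two strata are disjoint and both locally closed), so $\ol{\rep_\Lambda^{\bp,\bq}(\bd)}$ is not a component. That forces $a_1\leq 1$, and then Lemma~\ref{lem:pqgreater1} and indecomposability force $a_1=1$. This is a one-line application of the machinery you already set up; instead you propose a direct splitting argument via socle vectors and a ``transversality check,'' which you yourself flag as delicate and leave unworked. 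As written, the $u_1\leq\ell(\bq)$ case of your splitting argument is not a proof — you describe an ansatz for $X_2,Y_2$ but do not verify that complementary $X_1,Y_1$ with $C(X_1)\subseteq Y_1$ exist generically — so the proposal is incomplete precisely where it departs from the paper. The fix is simply to notice that the same containment lemma you already invoked also applies to the $1$'s.
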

\begin{proof}
From the above lemmas and indecomposability hypothesis, we can assume $(\bp,\bq)$ satisfies $\min(\bp)=1$, $\min(\bq)>1$, and $\bp$ has no part of size 2.

First we claim that without loss of generality, we can start by assuming
\begin{equation}\label{eq:pqsimplify2}
    \bp=(m^{a_m},k^{a_k},1^{a_1}),\qquad \bq=(n^{b_n},l^{b_l},2^{b_2}), \qquad a_k, b_l \in\{0,1\}
\end{equation}
for some $k$ with $2 < k < m$, and $l$ with $2 < l < n$.
If $\bp$ were not of the form in \eqref{eq:pqsimplify2}, then it would have two parts of size $k', k''$ with $2 < k' \leq k'' < m$.  Replacing these with parts of size $k'-1, k''+1$ would move up in dominance order, but not change the value $h(\bp,\bq)$, which depends only on the number of parts of size 1 and number of parts size greater than or equal to 2 in $\bp$, by Lemma \ref{lem:pqbar}.
From Lemma \ref{lem:pqhcontainment}, the closed subvariety $\ol{\rep_{\Lambda}^{\bp, \bq}(\bd)}$ 
for the $(\bp,\bq)$ which is larger in dominance order would contain the corresponding subvariety for the smaller $(\bp,\bq)$ in dominance order.
Applying a similar argument to $\bq$, we conclude that to be an irreducible component of $\rep_\Lambda(\bd)$,
it must be that $(\bp,\bq)$ is at least of the form \eqref{eq:pqsimplify2} with $a_1$ arbitrary.

Now if $(\bp,\bq)$ is as in \eqref{eq:pqsimplify2} with $a_1>1$,
we could move up in dominance order combining parts of size 1 in $\bp$ into parts of size 2, with possibly one part of size 1 remaining, without changing the value $h(\bp,\bq)$ since $\min(\bq)>1$.
Again in this case Lemma \ref{lem:pqhcontainment} implies that 
$\ol{\rep_{\Lambda}^{\bp, \bq}(\bd)}$ for the largest $(\bp,\bq)$ as in \eqref{eq:pqsimplify2} will contain the strata corresponding to smaller $(\bp,\bq)$ in dominance order, so that we can take $a_1 \in \{0,1\}$.
The indecomposability hypothesis and Lemma \ref{lem:pqgreater1} then force $a_1=1$, so we 
arrive at the forms \eqref{eq:psimplify} and \eqref{eq:qsimplify}.
\end{proof}

\begin{example}
Let $m=5, n=4$, and consider a pair of partitions
\begin{equation}
    \bp'=(4,3^6,1^5) \qquad \bq'=(4,3^5,2).
\end{equation}
Proposition \ref{prop:pqreduction} says that the associated stratum is not an irreducible component, and following the proof we see more specifically that the stratum for $(\bp',\bq')$ is contained in 
$\ol{\rep_{\Lambda}^{\bp, \bq}(\bd)}$ where
\begin{equation}
    \bp=(5^2,4,2^6,1) \qquad \bq=(4^3,3,2^3).
\end{equation}
The general element of this stratum will not be indecomposable since $\bp$ has parts of size 2 and 1, by Lemma \ref{lem:21decomp}.

In this example, we also note Proposition \ref{prop:pqreduction} does not tell us that $\ol{\rep_{\Lambda}^{\bp, \bq}(\bd)}$ is actually an irreducible component, since it could be contained the closure of a stratum associated to $(\bp'',\bq'')$ for some other pair of the forms in \eqref{eq:psimplify} and \eqref{eq:qsimplify}.  Additional tools would be needed to address this.
\end{example}

\subsection{Normal form in remaining strata}
Finally we tackle the remaining strata by showing a general element of those can be reduced to a unique normal form.  
We emphasize that these normal forms are not for general elements of \emph{all} strata, only the ones we have left after our reductions.  In fact, some other strata may not even have a dense orbit.
We introduce the following shorthand notation.

\begin{notation}
An entry of a $\kk[T]$-labeled matrix written as  $T^*$ in a row labeled $J_i$ and column labeled $J_j$ means $T^{i-2}$ if $i,j>1$, and $T^{i-1}$ if $j=1$.
\end{notation}

That is, $T^*$ represents the general homomorphism $J_j\to J_i$ which is annihilated by $T^2$.  Now fix a dimension vector $\bd = (d_1, d_2)$ and let $(\bp, \bq) \in \cP_m(d_1) \times \cP_n(d_2)$. We define a $\kk[T]$-labeled matrix $M_{\bp,\bq}$ as:
\begin{equation}\label{eq:hpq}
M_{\bp, \bq} : = 
\bordermatrix{& \cdots & J_{p_{\ell(\bp)-2}} & J_{p_{\ell(\bp)-1}} & J_{p_{\ell(\bp)}} \cr 
\quad \vdots &\ddots & \ddots& \ddots & \vdots \cr 
J_{q_{\ell(\bp)-2}} & \ddots &T^{ *}& 0 & 0 \cr 
J_{q_{\ell(\bq)-1}} &\ddots &  T^{ *}&  T^{*} & 0 \cr 
J_{q_{\ell(\bq)}} & \cdots & 0 &T^{*} & T^{*} \cr 
} 
\end{equation}

Below we assume $\ol{\rep_{\Lambda}^{\bp, \bq}(\bd)}$ is an irreducible component of $\rep_\Lambda(\bd)$,
and using the duality in Lemma \ref{lem:duality}, we can assume $\bp$ and $\bq$ are as in Proposition \ref{prop:pqreduction}.
Since we want to label rows and columns of matrices with the parts of these partitions, instead of exponential notation we will write 
\begin{equation}
    \bp=(p_1, p_2, \dotsc, p_k,1) \qquad \bq=(q_1, q_2, \dotsc, q_l), \quad p_k \geq 3, \quad q_l \geq 2.
\end{equation}
We can now present the normal form $\kk[T]$-matrices in the remaining strata.

\begin{proposition}
With the setup above, a general element of $\rep_{\Lambda}^{\bp, \bq}(\bd)$ is equivalent to:
\begin{equation}\label{eq:normalform}
M_{\bp,\bq}=\bordermatrix{& &  &  & J_{p_k} & J_1 \cr 
&\ddots & \ddots & \ddots& \ddots & \vdots \cr 
J_{q_{l-3}}& \ddots & T^{q_{l-3}-2} & 0 & 0 & 0 \cr 
J_{q_{l-2}} & \ddots & T^{q_{l-2}-2} & T^{q_{l-2}-2} & 0 & 0  \cr 
J_{q_{l-1}} &\ddots & 0 & T^{q_{l-1}-2} & T^{q_{l-1}-2} &  0   \cr 
J_{q_{l}} & \cdots & 0 & 0 & T^{q_{l}-2} & T^{q_{l}-1} \cr 
}.
\end{equation}
\end{proposition}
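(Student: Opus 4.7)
The strategy is an explicit Gaussian-style elimination: I bring a general matrix $C$ representing a point of $\rep_{\Lambda}^{\bp,\bq}(\bd)$ to $M_{\bp,\bq}$ by a sequence of row and column operations allowed by Lemma~\ref{lem:rowandcolumn}. Throughout, I use the form of entries from \eqref{eq:f}, the principle that genericity forces each pivot's leading coefficient to be nonzero, and the orderings $p_1 \geq \cdots \geq p_k > 1 = p_{k+1}$ and $q_1 \geq \cdots \geq q_l \geq 2$ afforded by Proposition~\ref{prop:pqreduction}. The reduction proceeds from the bottom-right corner upward and leftward, placing one column's worth of pivots at a time.

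\textbf{Base step (rightmost column and bottom row).} The rightmost column is the unique $J_1$-column and the bottom row is labeled $J_{q_l}$ with $q_l \geq 2$. The bottom-right entry is $aT^{q_l-1}$ by \eqref{eq:f}, generically nonzero; column-scaling makes it $T^{q_l-1}$. For any row $J_r$ above, the column-$J_1$ entry is $bT^{r-1}$ and $r \geq q_l$, so by Lemma~\ref{lem:rowandcolumn}(ii) I may subtract $bT^{r-q_l}$ times the bottom row to eliminate it, with no columns to the right to disturb. In the bottom row, each entry in a column $J_{p_i}$ with $p_i \geq 2$ is $\alpha T^{q_l-1} + \beta T^{q_l-2}$; adding a scalar multiple of column $J_1$ (Lemma~\ref{lem:rowandcolumn}(iii)) kills the $\alpha T^{q_l-1}$ piece, and since $\beta \neq 0$ generically I rescale column $J_{p_k}$ to obtain $T^{q_l-2}$ there, then use $p_i \geq p_k$ to add multiples of column $J_{p_k}$ to columns $J_{p_1},\ldots,J_{p_{k-1}}$, clearing their bottom-row entries.

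\textbf{Inductive step.} With the rightmost column and bottom row in normal form, I apply the same scheme to the next pair: column $J_{p_k}$ above the bottom pivot, paired with row $J_{q_{l-1}}$. The generic entry in row $J_{q_{l-1}}$, column $J_{p_k}$ is $\gamma T^{q_{l-1}-1} + \delta T^{q_{l-1}-2}$; I clear $\gamma T^{q_{l-1}-1}$ by subtracting $\gamma T^{q_{l-1}-q_l+1}$ times row $J_{q_l}$ from row $J_{q_{l-1}}$, which is legal by Lemma~\ref{lem:rowandcolumn}(ii), and whose induced change in column $J_1$ is $-\gamma T^{q_{l-1}}$, vanishing modulo $T^{q_{l-1}}$. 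After rescaling I obtain the pivot $T^{q_{l-1}-2}$ in row $J_{q_{l-1}}$, column $J_{p_k}$; I then clear the higher entries of column $J_{p_k}$ in two stages, killing top-degree parts $\gamma' T^{r-1}$ via row $J_{q_l}$ (exactly as in the base step) and next-degree parts $\delta' T^{r-2}$ via row $J_{q_{l-1}}$, with coefficients in the ideals prescribed by Lemma~\ref{lem:rowandcolumn}(ii) so that both previously-placed pivots are preserved. Iterating this procedure with column $J_{p_{k-1}}$ paired against row $J_{q_{l-2}}$, then column $J_{p_{k-2}}$ against row $J_{q_{l-3}}$, and so on, produces the full normal form $M_{\bp,\bq}$.

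\textbf{Main obstacle.} The principal challenge is bookkeeping during the inductive phase: one must verify that every row or column operation used to set a new pivot leaves all previously normalized pivots and zeros untouched. This reduces to comparing $T$-adic valuations of incidental contributions against the modulus of each target Hom-space, which in turn follows from the partition monotonicities combined with the $T^2$-annihilation encoded in \eqref{eq:Hsoc2} (i.e., the relation $b^2 c = 0$ of \eqref{eq:mainalgebras}).
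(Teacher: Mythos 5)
Your proof is correct and follows essentially the same staircase-elimination strategy as the paper: normalize a pivot in the bottom-right corner, clear its column upward and its row leftward, then move diagonally to the next tread/riser pair, using Lemma~\ref{lem:rowandcolumn} and the shape constraints from Proposition~\ref{prop:pqreduction} to justify each operation. Your two-stage clearing (killing the top-degree and next-degree parts separately via different pivot rows or columns) is a slightly more elaborate bookkeeping variant of the paper's single-pivot clearing, and your inductive step leaves the riser (row-clearing) half implicit in the word ``iterating,'' but both are minor stylistic differences rather than gaps.
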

\begin{proof}
A general element of $\rep_{\Lambda}^{\bp, \bq}(\bd)$ is represented by a $\kk[T]$-matrix of the form below with $a\neq 0$:
\begin{equation}\label{eq:step0}
\bordermatrix{&  &  &  J_{p_k} & J_1 \cr 
& \ddots & \ddots& \ddots & \vdots \cr 
J_{q_i} &\ddots & *&  * &  b T^{q_i-1} \cr 
& \ddots &*& * & * \cr 
J_{q_l} & \cdots & * & * & a T^{q_l-1} \cr 
}
\qquad a, b \in \kk.
\end{equation}
Call the lower right entry the first \emph{tread}. 
Exactly as in the first step of the proof of Lemma \ref{lem:pqboth1}, we can use this tread and the operations of Lemma \ref{lem:rowandcolumn}(ii) to clear the rightmost column above it and obtain:

\begin{equation}\label{eq:step1}
\rightsquigarrow
\bordermatrix{& &  &  & J_{p_k} & J_1 \cr 
&\ddots & \ddots & \ddots& \ddots & \vdots \cr 
& \ddots & * & * & * & 0 \cr 
J_{q_{l-1}}&\ddots & * & * & * &  0 \cr 
J_{q_l} & \cdots & * & * & a'T^{q_l-2} + b'T^{q_l-1} & T^{q_l-1} \cr 
}\qquad a', b' \in \kk.
\end{equation}

Call the entry to the left of the first tread the first \emph{riser}. 
According to the setup above, we have at most one $J_1$ column, and so the first riser belongs to a column labeled $J_{p_k}$ with $p_k \geq 3$.
Acting on this column by an invertible operation as in Lemma \ref{lem:rowandcolumn}(i),
we get this entry of the form $T^{q_l -2}$. The entries left of the first riser are all of the form
$a''T^{q_l - 2} + b''T^{q_l - 1}$ with $a'',b'' \in \kk$ as in: 
\begin{equation}
\rightsquigarrow
\bordermatrix{& &  &  & J_{p_k} & J_1 \cr 
&\ddots & \ddots & \ddots& \ddots & \vdots \cr 
& \ddots & * & * & * & 0 \cr 
J_{q_{l-1}}&\ddots & * & * & * & 0 \cr 
J_{q_l} & \cdots & * & a'' T^{q_l - 2} + b'' T^{q_l - 1} & T^{q_l -2} & T^{q_l-1} \cr 
} 
\end{equation}
Now this is the key moment where we need all the earlier combinatorial and geometric reductions:
using column operations with the same reasoning as the first step of Lemma \ref{lem:pqboth1},
we zero out the terms to the left of this riser to obtain the following.
\begin{equation}
\bordermatrix{& &  &  & J_{p_k} & J_1 \cr 
&\ddots & \ddots & \ddots& \ddots & \vdots \cr 
& \ddots & * & * & * & 0 \cr 
J_{q_{l-1}}&\ddots & * & * & * &  0 \cr 
J_{q_l} & \cdots & 0 & 0 &  T^{q_l-2} & T^{q_l-1} \cr 
}
\end{equation}
(If another column had been labeled $J_1$ again, this riser would be of the form $T^{q_l-1}$ and not necessarily able to clear entries to its left.)

We can now proceed inductively, labeling each entry above a riser as the next tread, and each entry to left of a tread as the next riser.
Each row and column from here on is labeled by some $J_k$ with $k \geq 2$.
For a tread in a row labeled $J_k$, multiplying this row by an invertible element of $\kk[T]/(T^k)$ makes the entry $T^{k-2}$, and then row operations upward clear the rest of this column above the tread.
Notice this does not change any entries from previous steps since the tread has all zero entries to its right.
Similarly, a riser is used with leftward column operations to clear its row to the left without changing any entries from previous steps, in lower rows.  We illustrate a couple more steps below for the reader.
\begin{equation}
    \rightsquigarrow
\bordermatrix{& &  &  & J_{p_k} & J_1 \cr 
&\ddots & \ddots & \ddots& \ddots & \vdots& \cr 
& \ddots & * & * & 0 & 0 & \cr 
J_{q_{l-1}}&\ddots & * & * & T^{q_{l-1} -2} &  0 &  \cr 
J_{q_l} & \cdots & 0 & 0 &  T^{q_l -2} & T^{q_l-1} \cr 
}
\end{equation}
\begin{equation}\rightsquigarrow
\bordermatrix{& & J_{p_{k-2}} & J_{p_{k-1}} & J_{p_k} & J_1 \cr 
&\ddots & \ddots & \ddots& \ddots & \vdots& \cr 
J_{q_{l-3}}& \ddots & * & 0 & 0 & 0 & \cr 
J_{q_{l-2}} & \ddots & T^{q_{l-2}-2} & T^{q_{l-2}-2} & 0 & 0 & \cr 
J_{q_{l-1}} &\ddots & 0 & T^{q_{l-1}-2} & T^{q_{l-1}-2} &  0 &  \cr 
J_{q_l} & \cdots & 0 & 0 & T^{q_l-2} & T^{q_l-1} \cr 
}
\end{equation}

Depending on the number of parts of $\bp$ and $\bq$, we see that this process will terminate when reaching either the left or top of the matrix, arriving at a normal form $M_{\bp,\bq}$ with no parameters from $\kk$ as in \eqref{eq:normalform}.
\end{proof}

With this normal form, we get a further reduction on $(\bp,\bq)$ giving rise to an indecomposable irreducible component.

\begin{corollary}\label{cor:lengthindecomp}
Suppose $(\bp,\bq)$ is such that $\ol{\rep_{\Lambda}^{\bp, \bq}(\bd)}$ is an indecomposable irreducible component of $\rep_\Lambda(\bd)$.
Then we have
\begin{equation}\label{eq:bpbqlength}
    \ell(\bp) \in \{\ell(\bq)-1,\ \ell(\bq),\ \ell(\bq)+1\}.
\end{equation}
\end{corollary}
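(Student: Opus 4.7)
The plan is to use the explicit staircase structure of the normal form \eqref{eq:normalform} to identify entirely zero rows or columns, which immediately force decomposability. By Proposition~\ref{prop:pqreduction} combined with Lemma~\ref{lem:duality}, after possibly interchanging $\bp$ and $\bq$ (an operation that preserves indecomposability of irreducible components) I may assume that $\bp$ satisfies \eqref{eq:psimplify} and $\bq$ satisfies \eqref{eq:qsimplify}, and in particular write $\bp = (p_1, \dotsc, p_k, 1)$ and $\bq = (q_1, \dotsc, q_l)$ with $\min(\bq) \geq 2$, so that $\ell(\bp) = k+1$ and $\ell(\bq) = l$. In this standardized orientation I aim to prove $\ell(\bp) - \ell(\bq) \in \{0, 1\}$; reversing the possible swap at the end will then yield the symmetric condition \eqref{eq:bpbqlength}.

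In $M_{\bp, \bq}$ the nonzero entries form a staircase starting at the bottom-right corner (row $J_{q_l}$, column $J_1$) and proceeding up-and-to-the-left via alternating leftward and upward single steps. A direct tally shows that after $s$ such steps the active entry lies in row $l - \lfloor s/2 \rfloor$ and column $k+1 - \lceil s/2 \rceil$, so the staircase continues as long as both indices remain at least $1$, terminating after precisely $\min(2l-1, 2k)$ steps.

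Suppose first $\ell(\bp) < \ell(\bq)$, equivalently $k + 1 < l$. Then the staircase halts at the leftmost column before reaching the top, leaving the top $l - k - 1 \geq 1$ rows of $M_{\bp, \bq}$ identically zero. Consequently the image of the linking map $C$ lies in $\bigoplus_{i > l - k - 1} J_{q_i}$, and each unused row $J_{q_i}$ splits off as a nontrivial direct summand of the representation (a copy of $J_{q_i}$ concentrated at vertex $2$ with zero incoming map), contradicting indecomposability. Symmetrically, if $\ell(\bp) > \ell(\bq) + 1$, equivalently $k > l$, the staircase halts at the top row before reaching the leftmost column; the leftmost $k - l \geq 1$ columns of $M_{\bp, \bq}$ are then identically zero, producing summands of the form $J_{p_i}$ at vertex $1$ with zero outgoing map, and again contradicting indecomposability.

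Hence $\ell(\bp) \in \{\ell(\bq), \ell(\bq) + 1\}$ in the standardized orientation, and undoing the possible duality swap gives \eqref{eq:bpbqlength} in general. The core argument is a bookkeeping of where the staircase terminates; the one subtlety I expect is to combine the standardized-orientation inequality correctly with the duality to recover the symmetric three-element set in the statement.
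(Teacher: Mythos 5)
Your proof is correct and follows essentially the same route as the paper, which simply observes that the condition \eqref{eq:bpbqlength} is (in the standardized orientation, equivalently up to duality) the condition that $M_{\bp,\bq}$ has no entirely zero rows or columns, and such a row or column forces a direct summand to split off. What you add is a careful bookkeeping of the staircase position via the formula for the active entry after $s$ steps, which makes the termination analysis and the count of zero rows/columns fully explicit; the paper states the equivalence as self-evident without this tally. You also correctly flag the one genuine subtlety, namely that in the standardized orientation (with $\bp$ ending in a $1$ and $\min(\bq)\geq 2$) the analysis gives the two-element condition $\ell(\bp)\in\{\ell(\bq),\ell(\bq)+1\}$, and it is only after symmetrizing over the duality swap of Lemma~\ref{lem:duality} that the three-element set in \eqref{eq:bpbqlength} appears -- a point the paper's one-line proof glosses over.
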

\begin{proof}
The condition in \eqref{eq:bpbqlength} is equivalent to $M_{\bp, \bq}$ having no rows or columns entirely of zeros, a clearly necessary condition for the corresponding representation to be indecomposable.
\end{proof}

\begin{remark}\label{rem:transpose}
Recall that Lemma \ref{lem:duality} allowed us interchange $\bp$ and $\bq$ via the equivariant isomorphism $\rep_{\Lambda(m,n)}^{\bq, \bp}(d_1,d_2) \simeq \rep_{\Lambda(n,m)}^{\bp, \bq}(d_2,d_1)$.
It is not literally true that the dense orbit of the stratum $\rep_{\Lambda(m,n)}^{\bq, \bp}(d_1,d_2)$ can be represented by 
the transpose of the $\kk[T]$-matrix $M_{\bp,\bq}\in \rep_{\Lambda(n,m)}^{\bp, \bq}(d_2,d_1)$,
because the powers of $T$ will change. However, using the shorthand with the $T^*$ notation, the dense orbits of these two strata can be represented by matrices which are transposes of one another.
\end{remark}

All together, we have shown that every stratum $\rep_{\Lambda}^{\bp, \bq}(\bd)$ whose closure may potentially be an irreducible component has a dense orbit, represented by the $\kk[T]$-labeled matrix $M_{\bp,\bq}$.
Thus we have proven that the algebras $\Lambda(m,n)$ in Theorem \ref{thm:main} have discrete general representation type.   
To complete the proof of this theorem, it remains to show that there are only finitely many indecomposable irreducible components with dense orbits.  We complete this in the next subsection.

\subsection{Classification of general indecomposables}
Now that we have a normal form for general representations in each stratum which is potentially an indecomposable irreducible component, we are able to determine which among these general representations are indecomposable.  
Throughout this whole section, we assume that $(\bp,\bq)$ is as in Proposition \ref{prop:pqreduction}, retaining that notation, and $(\bp,\bq)$ satisfies \eqref{eq:bpbqlength}.
We can now substantially narrow the possibilities for indecomposable irreducible components by removing those associated to partitions with repeated parts.

\begin{proposition}
Let  $(\bp, \bq) \in \cP_m(d_1) \times \cP_n(d_2)$ be as in Proposition \ref{prop:pqreduction}.
If either $\bp$ or $\bq$ has any repeated parts, i.e. any of $a_n, b_m, b_2$ is greater than 1,
then $M_{\bp, \bq}$ is decomposable. 
\end{proposition}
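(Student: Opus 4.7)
The plan is to exhibit explicit row and column operations (of the form allowed by Lemma \ref{lem:rowandcolumn}) that bring $M_{\bp,\bq}$ into block-diagonal form with at least two nonzero blocks. The core observation is this: whenever two adjacent columns of $M_{\bp,\bq}$ carry the same label $J_s$ (case $a_m\geq 2$), or two adjacent rows carry the same label $J_t$ (cases $b_n\geq 2$ or $b_2\geq 2$), the staircase prescription forces the two entries in their shared row (or column) to be equal. This means the cancellation operation ``subtract one from the other with scalar $-1$'' is legal here, whereas it would be forbidden by the ideal restrictions in Lemma \ref{lem:rowandcolumn}(ii)--(iii) if the labels differed.

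Case $a_m\geq 2$. Columns $1$ and $2$ are both labeled $J_m$, and from the formula \eqref{eq:normalform} one sees that the staircase entries in these columns agree in the row where they coexist. I perform the column operation replacing column~$2$ by (column~$2$)$-$(column~$1$), which is allowed with multiplier $-1\in\kk[T]$ because both labels are $J_m$. When $\ell(\bp)=\ell(\bq)+1$, column~$1$ carries only the single riser entry at row~$1$, so this one operation already splits off a $1\times 1$ block and exhibits a direct summand isomorphic to the representation $M_{(m),(q_1)}$. When $\ell(\bp)=\ell(\bq)$, column~$1$ still has two entries; I then follow up with the row operation replacing row~$1$ by (row~$1$)$+(-T^{q_1-q_2})\cdot$(row~$2$), which is admissible since $-T^{q_1-q_2}\in(T^{q_1-q_2})$ as required for rows labeled $J_{q_1}$ and $J_{q_2}$ with $q_1\geq q_2$. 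This zeros out the $(1,1)$-entry, leaving a direct summand $M_{(m),(q_2)}$.

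Cases $b_n\geq 2$ and $b_2\geq 2$ are handled by symmetric considerations with the roles of rows and columns interchanged, and with the repetition occurring at the top (respectively the bottom) of $\bq$. For $b_n\geq 2$, the row operation (row~$2$)$\mapsto$(row~$2$)$-$(row~$1$) between two rows labeled $J_n$ begins the decomposition and splits off $M_{(p_1),(n)}$, possibly after a compensating column operation in the $\ell(\bp)=\ell(\bq)+1$ subcase. For $b_2\geq 2$, the repetition is among the bottom rows labeled $J_2$, and a preparatory column operation on the adjacent columns (whose labels $J_{p_{k-1}}, J_{p_k}$ satisfy $p_{k-1}\geq p_k$, making a scalar multiplier admissible) combined with the row operation (row~$\ell(\bq)$)$\mapsto$(row~$\ell(\bq)$)$-$(row~$\ell(\bq)-1$) isolates a summand of the form $M_{(p_k),(2)}$.

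The main bookkeeping obstacle is verifying in each case that after the chosen operations, one resulting row (or column) together with its unique interacting column (or row) has no other nonzero entries, so that it genuinely splits off as a direct summand. This amounts to checking that the staircase pattern outside the small modified region is preserved verbatim, which I expect to follow directly from the location of the nonzero entries in \eqref{eq:normalform} together with the explicit formulas for the updated entries. Once the splittable block is isolated, the complementary block should be recognizable (possibly up to rescaling one row by $-1$) as the normal form $M_{\bp',\bq'}$ for the partitions obtained by deleting the ``used'' parts, so the fact that $M_{\bp,\bq}$ is decomposable follows at once.
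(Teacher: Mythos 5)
Your overall strategy matches the paper's: use the row and column operations of Lemma \ref{lem:rowandcolumn} on the normal form $M_{\bp,\bq}$ to exhibit an explicit $1\times 1$ block splitting off, and observe the complement is $M_{\bp',\bq'}$ for the smaller partitions. The main difference is that the paper works at the \emph{last} consecutive repeated pair (maximal $r$ with $p_r=p_{r+1}$, or the analogous $s$ for $\bq$), whereas you always work at the \emph{first} one (columns $1,2$ for $a_m\geq 2$, rows $1,2$ for $b_n\geq 2$, rows $\ell(\bq)-1,\ell(\bq)$ for $b_2\geq 2$). Both are in principle workable, and your $a_m\geq 2$ and $b_2\geq 2$ cases go through as you describe.

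There is, however, a genuine error in your $b_n\geq 2$ case when $\ell(\bp)=\ell(\bq)+1$ and $a_m=1$. You claim the split-off summand is $M_{(p_1),(n)}$, i.e.\ that the cell $(1,1)$ gets isolated. After the row operation $\mathrm{row}_2\mapsto\mathrm{row}_2-\mathrm{row}_1$, row $1$ still contains the nonzero entry $(1,2)$ in the column labeled $J_{p_2}$. Clearing $(1,2)$ by a column operation means modifying the column labeled $J_{p_2}$ using the column labeled $J_{p_1}$; since $p_2<p_1$, Lemma \ref{lem:rowandcolumn}(iii) forces the multiplier into $(T^{p_1-p_2})$, so the scalar $-1$ you need is not available. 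The only legal compensating column operation is the one that \emph{zeroes} $(1,1)$ instead, namely $\mathrm{col}_1\mapsto\mathrm{col}_1-\mathrm{col}_2$ (allowed because $p_1\geq p_2$). This isolates the tread cell $(1,2)$ and yields $M_{(p_2),(n)}$, not $M_{(p_1),(n)}$. One can see the claim is wrong directly: for $\bp=(m,c,1)$ with $2<c<m$ and $\bq=(n,n)$, one computes
\begin{equation*}
M_{\bp,\bq}=\begin{pmatrix} T^{n-2}&T^{n-2}&0\\ 0&T^{n-2}&T^{n-1}\end{pmatrix}\ \cong\ M_{(c),(n)}\oplus M_{(m,1),(n)},
\end{equation*}
so $M_{(m),(n)}$ is not a direct summand, and by Krull--Schmidt the decomposition you propose ($M_{(m),(n)}\oplus M_{(c,1),(n)}$) is not correct. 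The paper's choice of the \emph{last} repeated pair avoids this issue, since there the boundary effects at the end of the staircase are uniform; if you insist on working at the first pair for $b_n\geq 2$, you need to isolate the tread $(1,2)$, not the riser $(1,1)$.
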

\begin{proof}
We use row and column operations as in Lemma \ref{lem:rowandcolumn} to zero out some entries of $M_{\bp,\bq}$. 
Let $r$ be the maximal integer such that $p_{r} = p_{r+1}$.  Note that the assumed form of $\bp$ ensures $\ell(\bp)> r+1$ since $\bp$ must end with a 1 that is not repeated.
Recall that we call the top nonzero entry in a column of $M_{\bp,\bq}$ a ``tread'', and we call the leftmost nonzero entry in a row a ``riser''.
Let $s$ be the index of the row containing the riser of column $r$ (which depends on whether $\ell(\bp)=\ell(\bq)$ or $\ell(\bp)=\ell(\bq)+1$).
For readability, we zoom in on a submatrix of $M_{\bp,\bq}$ where all the interactions between nonzero terms occurs, keeping in mind that the row $s+1$ displayed below may not exist, but it would not affect the calculation.

First, the riser in column $r$ can be used to clear the tread in column $r+1$, since $p_r=p_{r+1}$:
\begin{equation}
\bordermatrix{& J_{p_{r-1}} & J_{p_r}  & J_{p_{r+1}} & J_{p_{r+2}} \cr 
J_{q_{s-2}} &T^* & 0 & 0 & 0 \cr 
J_{q_{s-1}} &  T^* &  T^* & 0 & 0\cr 
J_{q_{s}} & 0 &T^* & T^* & 0\cr 
J_{q_{s+1}} & 0 & 0 &T^* & T^* \cr 
}  \rightsquigarrow
\bordermatrix{& J_{p_{r-1}} & J_{p_r}  & J_{p_{r+1}} & J_{p_{r+2}} \cr 
J_{q_{s-2}} &T^* & 0 & 0 & 0 \cr 
J_{q_{s-1}} &  T^* &  T^* & -T^* & 0\cr 
J_{q_{s}} & 0 &T^* & 0 & 0\cr 
J_{q_{s+1}} & 0 & 0 &T^* & T^* \cr 
}.
\end{equation}
Then the tread in row $s$ can clear the entry above it:
\begin{equation}
\rightsquigarrow
\bordermatrix{& J_{p_{r-1}} & J_{p_r}  & J_{p_{r+1}} & J_{p_{r+2}} \cr 
J_{q_{s-2}} &T^* & 0 & 0 & 0 \cr 
J_{q_{s-1}} &  T^* & 0 & -T^* & 0\cr 
J_{q_{s}} & 0 &T^* & 0 & 0\cr 
J_{q_{s+1}} & 0 & 0 &T^* & T^* \cr 
}
\end{equation}
and we see that $M_{\bp,\bq}$ is equivalent to $M_{\tilde{\bp},\tilde{\bq}}\oplus M_{(p_r),(q_s)}$, where $\tilde{\bp},\tilde{\bq}$ is ad hoc notation for the partitions obtained by removing a part of size $p_r$ from $\bp$ and a part of size $q_s$ from $\bq$.

The case of a repeated part in $\bq$ is essentially the same.  In slightly more detail, if $q_{s}=q_{s+1}$ and the tread of row $s$ is in column $r$ of $M_{\bp,\bq}$, then $M_{\tilde{\bp},\tilde{\bq}}\oplus M_{(p_r),(q_s)}$.  Again the fact that $\bp$ has exactly one part of size 1 is relevant to be able to clear the appropriate entries, as it ensures $M_{\bp,\bq}$ has strictly greater than $r$ columns.
\end{proof}

This significantly constrains the possibilities for $(\bp,\bq)$ such that $\ol{\rep_{\Lambda}^{\bp, \bq}(\bd)}$ is an indecomposable irreducible component of $\rep_\Lambda(\bd)$.  In fact, we are left with only the following possibilities and their transposes (as in Remark \ref{rem:transpose}).

\begin{equation}\label{eq:1by1.1}
\bordermatrix{& J_u \cr 
 J_z & T^* \cr 
 }
\qquad
u \in \{1, \dotsc, m\}, 
\quad
z \in \{1,\dotsc, n\}
\end{equation}
\begin{equation}\label{eq:1by2.1}
\bordermatrix{& J_t & J_u \cr 
 J_z & T^* & T^* \cr 
 }
\qquad
(t,u) \in \{(k, 1)\}_{k=3}^m,  
\quad
z \in \{2,\dotsc, n\}
\end{equation}
\begin{equation}\label{eq:2by2.1}
\bordermatrix{& J_t & J_u \cr 
J_y & T^{ *}& 0 \cr 
J_z & T^{ *}&  T^{*} \cr 
}
\qquad 
(t,u) \in \{(k, 1)\}_{k=3}^m,  
\quad
(y,z) \in \{(k, 2)\}_{k=3}^n
\end{equation}
\begin{equation}\label{eq:2by3.1}
\bordermatrix{& J_s & J_t & J_u \cr 
J_y & T^{ *}&  T^{*} & 0 \cr 
J_z & 0 &T^{*} & T^{*} \cr 
}
\qquad 
(s,t,u) \in \{(m, k,1)\}_{k=3}^{m-1}, 
\quad
(y,z) \in \{(k, 2)\}_{k=3}^n
\end{equation}
\begin{equation}\label{eq:3by3.1}
\bordermatrix{& J_s & J_t & J_u \cr 
J_x & T^{ *}& 0 & 0 \cr 
J_y & T^{ *}&  T^{*} & 0 \cr 
J_z & 0 &T^{*} & T^{*} \cr 
}\qquad 
(s,t,u) \in \{(m, k,1)\}_{k=3}^{m-1}, 
\quad
(x,y,z) \in \{(n,k,2)\}_{k=3}^{n-1}
\end{equation}

Now with this list, we can show that the dense orbit in each of the remaining strata does correspond to an indecomposable module.

\begin{lemma}
If $M_{\bp, \bq}$ is of one of the forms in
\eqref{eq:1by1.1}--\eqref{eq:3by3.1}, then $M_{\bp, \bq}$ is indecomposable. 
\end{lemma}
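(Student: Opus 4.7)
The plan is to show $\End_\Lambda(M_{\bp,\bq})$ is local, which forces $M_{\bp,\bq}$ to be indecomposable. Using the dictionary of Section \ref{sec:KT}, an endomorphism of $M_{\bp,\bq}$ is a pair $(\phi,\psi)$ of $\kk[T]$-module endomorphisms of $X=\bigoplus_j J_{p_j}$ and $Y=\bigoplus_i J_{q_i}$ satisfying $\psi M_{\bp,\bq}=M_{\bp,\bq}\phi$. The Jacobson radical of $\End_{\kk[T]}(X)$ consists of those matrix endomorphisms whose diagonal entries $a_{jj}\in\kk[T]/(T^{p_j})$ have zero constant term, with semisimple quotient $\kk^{\ell(\bp)}$; similarly for $Y$. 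Hence the composite
\begin{equation*}
    \End_\Lambda(M_{\bp,\bq})\hookrightarrow\End_{\kk[T]}(X)\times\End_{\kk[T]}(Y)\onto \kk^{\ell(\bp)}\times\kk^{\ell(\bq)}
\end{equation*}
is a $\kk$-algebra homomorphism whose kernel is a nil ideal, hence nilpotent by finite-dimensionality. It therefore suffices to show that the image lies in the diagonal copy of $\kk$, i.e.\ that for every endomorphism $(\phi,\psi)$, all diagonal constants $(a_{jj})_0$ and $(b_{ii})_0$ are equal to a common $\lambda\in\kk$.

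To prove this common-scalar claim, I would work entry by entry on the staircase pattern of $M_{\bp,\bq}$. Each nonzero entry at position $(i,j)$ has the form $T^{q_i-2}$, or $T^{q_i-1}$ in the rightmost $J_1$ column, and the corresponding entry of $\psi M_{\bp,\bq}=M_{\bp,\bq}\phi$ is an equation in $\kk[T]/(T^{q_i})$. Extracting its lowest-degree coefficient yields, schematically,
\begin{equation*}
    (b_{ii})_0+\sum_{i'\neq i}\gamma_{ii'}=(a_{jj})_0+\sum_{j'\neq j}\delta_{j'j},
\end{equation*}
where the sums involve the constant parts of off-diagonal blocks $b_{ii'}$ and $a_{j'j}$ after reduction. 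The off-diagonal contributions are made to vanish by the equations at the sparse outer entries of the staircase (the $J_1$ column and the $J_2$ row): the strict inequalities $p_k\geq 3$ and $q_l\geq 2$ from Proposition \ref{prop:pqreduction} force terms such as $T^{q_i-2}\cdot a_{j'j}$ into the annihilator $(T^{q_i})$, leaving only diagonal contributions. Chaining the resulting identities $(b_{ii})_0=(a_{jj})_0$ along the staircase from the $J_1$-$J_2$ corner to the opposite corner identifies all diagonal constants with a single $\lambda$.

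This is a finite check over the five forms \eqref{eq:1by1.1}--\eqref{eq:3by3.1}, with the transposed configurations handled uniformly by Lemma \ref{lem:duality} and Remark \ref{rem:transpose}. The cases \eqref{eq:1by1.1}--\eqref{eq:2by2.1} are short, essentially one or two applications of the tread/riser equation. The main obstacle is the bookkeeping in \eqref{eq:3by3.1}: there are three diagonal cells to link, and in order to clear the off-diagonal blocks $b_{12}, b_{13}$ mapping into the top summand $J_n$ of $Y$, one must first use the $(1,3)$ equation to kill the leading coefficient of $b_{13}$, then the $(1,2)$ equation to kill that of $b_{12}$, before the $(1,1)$ equation delivers $(b_{11})_0=(a_{11})_0$. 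The precise exponents appearing in the $J_1$ column and $J_2$ row, together with the inequalities $k,l\geq 3$ from Proposition \ref{prop:pqreduction}, are exactly what enable this propagation to succeed.
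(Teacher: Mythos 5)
Your proposal is correct and follows essentially the same route as the paper: both reduce to showing that the constant terms of the diagonal entries of the two $\kk[T]$-matrix components of an endomorphism all coincide, and both extract this from coefficient comparisons in $\psi M_{\bp,\bq} = M_{\bp,\bq}\phi$, using the gaps $s-t,\,t-u \geq 2$ from Proposition~\ref{prop:pqreduction} to first annihilate the constant terms of the relevant off-diagonal blocks before chaining equalities along the staircase. Your packaging via the radical of $\End_{\kk[T]}(X)\times\End_{\kk[T]}(Y)$ and its semisimple quotient is a slightly more abstract way of stating the paper's ``$\omega\,\id + g$ with $g$ nilpotent'' conclusion, but the computational content and the order of elimination (kill $G_{1,3}$, then $G_{1,2}$, then read off $\lambda_1 = \mu_1$) are the same.
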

\begin{proof}
It is enough to show that any $\Lambda$-module endomorphism of one of these $M_{\bp, \bq}$ is of the form $\omega \id_{M_{\bp, \bq}}+ g$ with $\omega \in \kk$ and $g$ a nilpotent endomorphism of $M_{\bp, \bq}$ (see \cite[Cor.~4.20]{Schiffler14}, for example).
An endomorphism of the representation $M_{\bp,\bq}$ can be represented by a pair of $\kk[T]$-matrices $F,G$ where
\begin{equation*}
F=\bordermatrix{ & & & &\cr
&\lambda_1 + T F_{1,1} & T^{p_1 - p_2} F_{1,2} & T^{p_1 - p_3} F_{1,3}  & \cdots  \cr\cr
 &F_{2,1} & \lambda_2 + T F_{2,2} & T^{p_2 - p_3} F_{2,3}  & \ddots  \cr\cr
& F_{3,1} & F_{3,2} & \lambda_3 + T F_{3,3} & \cdots  \cr 
& \vdots &  & \ddots & \ddots  
}\end{equation*}
\begin{equation*}
G=\bordermatrix{ & & & &\cr
&\mu_1 + T G_{1,1} & T^{q_1 - q_2} G_{1,2} & T^{q_1 - q_3} G_{1,3}  & \cdots  \cr\cr
 &G_{2,1} & \mu_2 + T G_{2,2} & T^{q_2 - q_3} G_{2,3}  & \ddots  \cr\cr
& G_{3,1} & G_{3,2} & \mu_3 + T G_{3,3} & \cdots  \cr 
& \vdots &  & \ddots & \ddots  
}\end{equation*}
with  $\lambda_i, \mu_i \in \Bbbk$ and $F_{i,j}, G_{i,j} \in \Bbbk[T]$. 
We will show that $\lambda_i=\lambda_j$,  $\mu_i=\mu_j$,  and then $\lambda_i = \mu_j$ for all $i,j$ through a case-by-case analysis which necessarily uses a number of the reductions made up to here.  
This will prove the lemma because then subtracting $\lambda_1\id_{M_{\bp, \bq}}$ leaves matrices which are strictly lower triangular modulo $(T)$, thus nilpotent endomorphisms of the corresponding $\kk[T]$-modules and of $M_{\bp,\bq}$ itself. 

The $1 \times 1$ case in \eqref{eq:1by1.1} is trivial.

Consider the $1\times 2$ case in \eqref{eq:1by2.1}:
\begin{equation*}
M_{\bp, \bq} = 
\bordermatrix{& J_t & J_u \cr 
 J_z & T^{z-2} & T^{z-1} \cr 
 }
\qquad
(t,u) \in \{(k, 1)\}_{k=3}^m,  
\quad
z \in \{2,\dotsc, n\}.
\end{equation*}
Then the endomorphism ring of $M_{\bp, \bq}$ is determined by equality of the following two matrices over $\kk[T]$:
\begin{equation}
 M_{\bp, \bq}F = \begin{pmatrix} \lambda_1 T^{z-2} + T^{z-1} F_{1,1} + T^{z-1} F_{2,1}  & T^{z-2+t-u} F_{1,2}+ \lambda_2 T^{z-1} + T^{z} F_{2,2} \end{pmatrix}
\end{equation}
\begin{equation}
  G M_{\bp, \bq} = \begin{pmatrix} \mu_1T^{z-2} + T^{z-1}G_{1,1} & \mu_1 T^{z-1} + T^{z} G_{1,1}  \end{pmatrix}.
\end{equation}
yielding polynomial equalities
\begin{align}
     \lambda_1 T^{z-2} + T^{z-1} F_{1,1} + T^{z-1} F_{2,1} & = \mu_1T^{z-2} + T^{z-1}G_{1,1} \label{eq:1} \\
   T^{z-2+t-u} F_{1,2}+ \lambda_2 T^{z-1} + T^{z} F_{2,2} & = \mu_1 T^{z-1} + T^{z} G_{1,1}. \label{eq:2}
\end{align}
Comparing the degree $z-2$ terms in \eqref{eq:1} gives $\lambda_1 = \mu_1$, and comparing degree $z-1$ terms in \eqref{eq:2} gives that $\lambda_2 = \mu_1$, completing this case. The assumptions are essential since $t - u \geq2$  guarantees that $z-2+t-u \not = z-1$ in \eqref{eq:2}.

Next consider the two by two case. 
\begin{equation}\label{eq:2by2.2}
M_{\bp, \bq} = \bordermatrix{& J_t & J_u \cr 
J_y & T^{ y-2}& 0 \cr 
J_z & T^{ z-2}&  T^{z-1} \cr 
}
\qquad 
(t,u) \in \{(k, 1)\}_{k=3}^m,  
\quad
(y,z) \in \{(k, 2)\}_{k=3}^n
\end{equation}
We then obtain matrices 
\begin{align*}
    M_{\bp, \bq} F & = \begin{pmatrix}T^{y-2} \lambda_1 + T^{y-1}F_{1,1} & T^{y - 2 + t - u} F_{1,2} \\
    T^{z-2} \lambda_1 + T^{z-1}F_{1,1} + T^{z-1} F_{2,1} &T^{z - 2 + t - u} F_{1,2} +  T^{z - 1}\lambda_2   + T^zF_{1,1} \end{pmatrix}\\
    G M_{\bp, \bq}  & = \begin{pmatrix}T^{y - 2} \mu_1 + T^{y-1} G_{1,1} + T^{y-2}G_{1,2}  & T^{y - 1} G_{1,2} \\
    T^{y-2} G_{2,1} + T^{z-2}\mu_2 + T^{z-1} G_{2,2}& T^{z-1} \mu_2 + T^z G_{2,2} \end{pmatrix}.
\end{align*}
Comparing $(1,2)$ entries in the equation $M_{\bp, \bq}F =G M_{\bp, \bq}$ gives
\begin{equation}
    T^{y - 2 + t - u} F_{1,2} = T^{y - 1} G_{1,2},
\end{equation}
and since $t-u \geq 2$ this implies $G_{1,2}$ has a constant term equal to zero when we equate degree $y-1$ coefficients. The equations on the diagonal of the matrix equality $M_{\bp, \bq}F =G M_{\bp, \bq}$ are
\begin{align}
T^{y-2} \lambda_1 + T^{y-1}F_{1,1} & = T^{y - 2} \mu_1 + T^{y-1} G_{1,1} + T^{y-2}G_{1,2} \label{eq:2.3} \\
T^{z - 2 + t - u} F_{1,2} +  T^{z - 1}\lambda_2   + T^zF_{1,1}  & = T^{z-1} \mu_2 + T^z G_{2,2} \label{eq:2.4}.
\end{align}
We can infer that $\mu_2 = \lambda_2$ when we equate degree $z-1$ terms in \eqref{eq:2.4}. Again, the assumptions are essential so that $t-u \geq 2$. In \eqref{eq:2.3},  we already know that $G_{1,2}$ has a zero constant term, so comparing degree $y-2$ terms gives $\lambda_1 = \mu_1$. The last equation in the equality $M_{\bp, \bq}F =G M_{\bp, \bq}$ is
\begin{equation}    
T^{z-2} \lambda_1 + T^{z-1}F_{1,1} + T^{z-1} F_{2,1}  =  T^{y-2} G_{2,1} + T^{z-2}\mu_2 + T^{z-1} G_{2,2},
\end{equation}
which yields that $\lambda_1 = \mu_2$ when we equate degree $z-2$ coefficients, completing this case. Note that the assumption $y < z$ is essential to make this claim. 

Now consider the $2 \times 3$ case
\begin{equation}\label{eq:2by3.2}
M_{\bp, \bq} = \bordermatrix{& J_s & J_t & J_u \cr 
J_y & T^{ y-2}&  T^{y-2} & 0 \cr 
J_z & 0 &T^{z-2} & T^{z-1} \cr 
}
\qquad 
(s,t,u) \in \{(m,k,1)\}_{k=3}^{m-1}, 
\quad
(y,z) \in \{(k, 2)\}_{k=3}^n,
\end{equation}
where the matrix equation for $M_{\bp, \bq} F$ is 
{\small
\begin{equation}
 \begin{pmatrix}
T^{y - 2} \lambda_1 + T^{y-1}F_{1,1} + T^{y-2}F_{2,1} 
& T^{y-2 + s - t}F_{1,2} + T^{y-2} \lambda_2 + T^{y-1} F_{2,2} 
& T^{y - 2 + s - u} F_{1,3} + T ^{y - 2 + t - u} F_{2,3} \\

T^{z-2} F_{2,1} + T^{z-1} F_{3,1} 
& T^{z-2} \lambda_2 + T^{z-1} F_{2,2} + T^{z-1} F_{3,2}
& T^{z-2 +t - u} F_{2,3} + \lambda_{3} T^{z-1}  + T^{z} F_{3,3}
\end{pmatrix}
\end{equation}}
and the matrix for $ G  M_{\bp, \bq}$ is
\begin{equation}
   \begin{pmatrix} 
   T^{ y - 2} \mu_1 + T^{y-1}G_{1, 1}
   & T^{y-2} \mu_1 + T^{y-1} G_{1,1} + T^{y-2}G_{1,2}
   & T^{y-1}G_{1,2}\\
   
   T^{y-2} G_{2,1} 
   & T^{y-2} G_{2,1} + T^{z-2} \mu_2  + T^{z-1} G_{2,2}
   & T^{z-1} \mu_2 + T^{z} G_{2,2}
   \end{pmatrix}.
\end{equation}
Comparing $(2, 1)$ entries of $M_{\bp, \bq}F = G  M_{\bp, \bq}$ gives
\begin{align}
  T^{z-2} F_{2,1} + T^{z-1} F_{3,1}   & =    T^{y-2} G_{2,1}, \label{eq:11}
\end{align}
implying that $F_{2,1}$ has a zero constant term when equating degree $z-2$ coefficients. Next consider the following equations obtained from the equality $  M_{\bp, \bq}F=G  M_{\bp, \bq}$ moving diagonally down and right from the top left entry. 

\begin{align}
T^{y - 2} \lambda_1 + T^{y-1}F_{1,1} + T^{y-2}F_{2,1}  & =    T^{ y - 2} \mu_1 + T^{y-1}G_{1, 1}\label{eq:12}\\
T^{z-2} \lambda_2 + T^{z-1} F_{2,2} + T^{z-1} F_{3,2}   & = T^{y-2} G_{2,1} + T^{z-2} \mu_2  + T^{z-1} G_{2,2}.\label{eq:13} 
\end{align}
Since $F_{2,1}$ was shown to have a zero constant term, equating coefficients of degree $y-2$ in \eqref{eq:12} yields that $\lambda_1 = \mu_1$.
Then \eqref{eq:13} gets us that $\lambda_2 = \mu_2$ when equating degree $z-2$ coefficients. 
Comparing $(1,3)$ entries of $M_{\bp, \bq}F = G  M_{\bp, \bq}$ gives
\begin{align}
T^{y - 2 + s - u} F_{1,3} + T ^{y - 2 + t - u} F_{2,3}  &=   T^{y-1}G_{1,2} .\label{eq:14} 
\end{align}
The assumptions are $s- u, t - u \geq 2$ and thus when equate coefficients of degree $y - 1$ we deduce $G_{1,2}$ must have a zero constant term. The final two equalities are
\begin{align}
  T^{y-2 + s - t}F_{1,2} + T^{y-2} \lambda_2 + T^{y-1} F_{2,2}   & = T^{y-2} \mu_1 + T^{y-1} G_{1,1} + T^{y-2}G_{1,2}\label{eq:15} \\
   T^{z-2 +t - u} F_{2,3} + \lambda_{3} T^{z-1}  + T^{z} F_{3,3} & = T^{z-1} \mu_2 + T^{z} G_{2,2}.\label{eq:16}
\end{align}
Equation \eqref{eq:16} immediately gets us that $\lambda_3 = \mu_2$ when we equate degree $z-1$ terms. Note that this is possible since $t- u \geq 2$. Next, recall that $G_{1,2}$ has zero constant term. Thus by equating degree $y-2$ coefficients in \eqref{eq:15}, we conclude that $\lambda_2 = \mu_1$, completing this case.

Consider the final case where
\begin{equation}\label{eq:3by3.2}
M_{\bp, \bq}= \bordermatrix{& J_s & J_t & J_u \cr 
J_x & T^{ x-2}& 0 & 0 \cr 
J_y & T^{ y-2}&  T^{y-2} & 0 \cr 
J_z & 0 &T^{z-2} & T^{z-1} \cr 
}
\qquad 
(s,t,u) \in \{(m,k,1)\}_{k=3}^{m-1}, 
\quad
(x,y,z) \in \{(n, k,2)\}_{k=3}^{n-1}
\end{equation}

 and $M_{\bp, \bq}F$ is equal to 
{\small
\begin{equation}
\begin{pmatrix}T^{x-2}\lambda_1 + T^{x-1} F_{1,1} 
& T^{x-2 + s - t} F_{1,2} 
&T^{x - 2 + s - u} F_{1,3} \\
T^{y - 2} \lambda_1 + T^{y-1} F_{1,1} + T^{y - 2} F_{2,1} 
& T^{y - 2 + s - t}F_{1,2}+T^{y - 2}\lambda_2 + T^{y - 1} F_{2,2} 
& T^{y-2 + s - u} F_{1, 3} + T^{y-2 + t - u } F_{2, 3} \\
T^{z-2} F_{2,1} + T^{z-1} F_{3,1} 
& T^{z-2} \lambda_2 + T^{z-1} F_{2,2} + T^{z-1} F_{3,2} 
& T^{z-2 + t - u} F_{2,3}+  T^{z-1}\lambda_ 3 + T^{z} F_{3,3}\end{pmatrix}
\end{equation} }
and  $G M_{\bp, \bq} $ is equal to
\begin{equation}
 \begin{pmatrix} T^{x-2} \mu_1 + T^{x-1} G_{1,1} + T^{x-2} G_{1,2} 
& T^{x-2} G_{1,2} +T^{x-2} G_{1,3} 
& T^{x-1} G_{1,3} \\

T^{x-2} G_{2,1} + T^{y-2} \mu_2 + T^{y-1} G_{2,2}
& T^{y-2}\mu_2 + T^{y-1} G_{2,2} +T^{y-2} G_{2,3}
& T^{y-1} G_{2,3}\\

T^{x-2} G_{3,1} + T^{y-2} G_{3,2} 
& T^{y-2}G_{3,2} + T^{z-2} \mu_3 + T^{z-1} G_{3,3} 
& T^{z-1}\mu_ 3 + T^z G_{3,3}\end{pmatrix}. \end{equation}
When we equate terms above the diagonal, we obtain the following equations. 
\begin{align}
T^{x-2 + s - t} F_{1,2}   & = T^{x-2} G_{1,2}+ T^{x-2} G_{1,3} \label{eq:3.1}\\
 T^{x - 2 + s - u} F_{1,3}&= T^{x-1} G_{1,3} \label{eq:3.2}\\
T^{y-2 + s - u} F_{1, 3} + T^{y-2 + t - u } F_{2, 3} & =T^{y-1} G_{2,3} .\label{eq:3.3}
\end{align}

By equating coefficients of degree $ y-1$ in \eqref{eq:3.3} gets us that $G_{2,3}$ must have a zero constant. The hypotheses are key because we are guaranteed that $s - u \geq 2$ and $t - u \geq 2$ on the left side of \eqref{eq:3.3}.  
We the equate $x-1$ degree coefficients \eqref{eq:3.2}, $s-u \geq 2$ guarantees that $G_{1,3}$ has a constant term equal to zero. 
Now equating degree $x-2$ coefficients in \eqref{eq:3.1}, using the assumption that $s - t \geq 2$, and also the fact that $G_{1,3}$ has a zero constant term, we can conclude that $G_{1,2}$ must also have zero constant terms.  In summary, we know that $G_{1,3}$, $G_{1,2}$, and $G_{2,3}$ all have zero constant terms. 

We now show that $\lambda_1 = \mu_1, \;\lambda_2 = \mu_2 , \lambda_3 = \mu_3$ by looking at the equations on the diagonal from the equality $M_{\bp, \bq}F = GM_{\bp, \bq}$.  The equations are 
\begin{align}
  T^{x-2}\lambda_1 + T^{x-1} F_{1,1}  &=   T^{x-2} \mu_1 + T^{x-1} G_{1,1} + T^{x-2} G_{1,2} \label{eq:3.4} \\
    T^{y - 2 + s - t}F_{1,2}+T^{y - 2}\lambda_2 + T^{y - 1} F_{2,2}  &=  T^{y-2}\mu_2 + T^{y-1} G_{2,2} +T^{y-2} G_{2,3} \label{eq:3.5}\\
   T^{z-2 + t - u} F_{2,3}+  T^{z-1}\lambda_ 3 + T^{z} F_{3,3}  &=  T^{z-1}\mu_ 3 + T^z G_{3,3}.\label{eq:3.6}
\end{align}
Equating coefficients of degree $z-1$ in \eqref{eq:3.6} and using the hypothesis that $t - u\geq 2$ immediately gets us $\lambda_3= \mu_3$. Using our prior results from \eqref{eq:3.1}-\eqref{eq:3.3}, $G_{2,3}$, $G_{1,2}$ have zero constants. Then by equating the $y - 2$ coefficients in the equations \eqref{eq:3.5}, using the assumption that $s-t \geq 2$, and the fact that $G_{2,3}$ has a zero constant we deduce $\lambda_2 = \mu_2$. We equate degree $x-2$ coefficients in \eqref{eq:3.4} and use the fact that $G_{1,2}$ has a zero constant term to conclude that $\lambda_1 = \mu_1$.

Our next goal is to show $\lambda_1 = \mu_2$ and $\lambda_2 = \mu_3$. We look at the equations below the main diagonal in the equality $M_{\bp, \bq} F =G M_{\bp, \bq}.$

\begin{align}
T^{y - 2} \lambda_1 + T^{y-1} F_{1,1} + T^{y - 2} F_{2,1}  & = T^{x-2} G_{2,1} + T^{y-2} \mu_2 + T^{y-1} G_{2,2}\label{eq:3.7}\\
T^{z-2} F_{2,1} + T^{z-1} F_{3,1}  &=T^{x-2} G_{3,1} + T^{y-2} G_{3,2} \label{eq:3.8}\\
T^{z-2} \lambda_2 + T^{z-1} F_{2,2} + T^{z-1} F_{3,2}  & = T^{y-2}G_{3,2} + T^{z-2} \mu_3 + T^{z-1} G_{3,3} \label{eq:3.9}
\end{align}
Now \eqref{eq:3.9} immediately gives the equality $\lambda_{2} = \mu_3$ when we equate $z-2$ degree coefficients, and
\eqref{eq:3.8} tells us that $F_{2,1}$ has a zero constant term when we equate $z-2$ coefficients. We make this substitution in \eqref{eq:3.7}. We compare degree $y-2$ coefficients in \eqref{eq:3.7} to conclude $\lambda_1  = \mu_2$, completing the final case.
\end{proof}

Finally, the results above combine to prove the following theorem, from which Theorem \ref{thm:main} follows.

\begin{theorem}\label{thm:list}
The finite list of $\kk[T]$-matrices 
in \eqref{eq:1by1.1}--\eqref{eq:3by3.1}, 
along with their transposes as in Remark \ref{rem:transpose},
contains all those modules $M_{\bp,\bq}$ such that $\ol{\rep_{\Lambda}^{\bp, \bq}(\bd)}$ is an indecomposable irreducible component of $\rep_{\Lambda}(\bd)$.
Thus, $\Lambda$ is of finite general representation type.
\end{theorem}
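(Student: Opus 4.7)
The plan is to assemble the reductions and normal-form analysis from Sections 3--4 into a short synthesis; at this stage the heavy lifting has been done, and what remains is to trace the implications of the intermediate results in the correct order.

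I would begin by narrowing the candidate pairs $(\bp,\bq)$ to a finite list. Suppose $\ol{\rep_\Lambda^{\bp,\bq}(\bd)}$ is an indecomposable irreducible component of $\rep_\Lambda(\bd)$. Lemmas \ref{lem:pqboth1} and \ref{lem:pqgreater1} rule out $\min(\bp)$ and $\min(\bq)$ being simultaneously $1$ or simultaneously greater than $1$. Using the duality in Lemma \ref{lem:duality} I may assume $\min(\bp)=1$ and $\min(\bq)>1$, and then Lemma \ref{lem:21decomp} forces $\bp$ to have no part of size $2$. Proposition \ref{prop:pqreduction} produces the forms \eqref{eq:psimplify}--\eqref{eq:qsimplify}. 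The proposition on repeated parts then forces that no part of $\bp$ or $\bq$ is repeated, so $\ell(\bp)\leq 3$ and $\ell(\bq)\leq 3$; Corollary \ref{cor:lengthindecomp} adds the constraint $|\ell(\bp)-\ell(\bq)|\leq 1$. Carefully enumerating the surviving pairs and writing out the corresponding normal-form matrix $M_{\bp,\bq}$ from \eqref{eq:hpq} should yield precisely the list \eqref{eq:1by1.1}--\eqref{eq:3by3.1}; the transposes described in Remark \ref{rem:transpose} account for the symmetric case $\min(\bq)=1$, $\min(\bp)>1$ that I set aside using duality.

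Next I would invoke the normal-form proposition to see that each surviving stratum has a dense $GL(\bd)$-orbit represented by $M_{\bp,\bq}$, and the subsequent indecomposability lemma to confirm that each such $M_{\bp,\bq}$ is indecomposable. Combined, these show that every $\ol{\rep_\Lambda^{\bp,\bq}(\bd)}$ appearing on the list is an indecomposable irreducible component with a dense orbit, and conversely that every indecomposable irreducible component of every $\rep_\Lambda(\bd)$ is represented on this finite list.

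Finally, I would apply Corollary \ref{cor:indecompcomponent}: since every indecomposable irreducible component of every $\rep_\Lambda(\bd)$ contains a dense orbit, $\Lambda$ is of discrete general representation type, and since only finitely many isomorphism classes of indecomposable representations arise in this way as $\bd$ varies, $\Lambda$ is of finite general representation type in the sense of Definition \ref{def:discretegeneral}, establishing Theorem \ref{thm:main} as a consequence. The principal obstacle in this assembly is essentially bookkeeping: one must verify that successive reductions do not discard a surviving case and that the enumeration of pairs $(\bp,\bq)$ really terminates at the five shapes of \eqref{eq:1by1.1}--\eqref{eq:3by3.1} and their transposes. No further representation-theoretic input beyond the intermediate propositions and lemmas is needed.
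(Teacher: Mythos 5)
Your synthesis follows the paper's implicit proof of Theorem~\ref{thm:list}: the paper simply combines the earlier reductions (Lemmas \ref{lem:pqboth1}, \ref{lem:pqgreater1}, \ref{lem:21decomp}, Proposition \ref{prop:pqreduction}, the repeated-parts proposition, Corollary \ref{cor:lengthindecomp}) with the normal-form proposition and the indecomposability lemma, then invokes Corollary \ref{cor:indecompcomponent}, exactly as you do. There is, however, one genuine misstep in your opening reduction, which would leave a hole if you wrote this out in full. Lemmas \ref{lem:pqboth1} and \ref{lem:pqgreater1} do \emph{not} ``rule out'' $\min(\bp)=\min(\bq)=1$ or $\min(\bp),\min(\bq)>1$; they only assert that a general representation has a direct summand of a particular small shape. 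That is a proper decomposition only when $\bp$ or $\bq$ has more than one part. If $\bp$ and $\bq$ are both single parts, the asserted summand is the whole module, which is in fact indecomposable: for example $\bp=\bq=(1)$, $\bd=(1,1)$ gives $\rep_\Lambda((1,1))\cong\mathbb{A}^1$, whose general point is $M_{(1),(1)}$, an indecomposable representation spanning an indecomposable irreducible component. These single-part strata are exactly the $1\times 1$ matrices in \eqref{eq:1by1.1} with $(u,z)$ ranging over all of $\{1,\dots,m\}\times\{1,\dots,n\}$.

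If you execute your reduction as written, then after discarding the two ``forbidden'' cases and applying duality, the surviving pairs all have $\min(\bp)=1$, $\min(\bq)>1$; your enumeration would then produce only the slice of \eqref{eq:1by1.1} with $u=1$ (plus its transposes), a proper subset of the stated list, and you could not conclude that the list contains \emph{all} indecomposable irreducible components. (The paper's transitional sentence ``These two lemmas reduce us to the case that exactly one of $\min(\bp),\min(\bq)$ is equal to $1$'' has the same imprecision, but its final list \eqref{eq:1by1.1} already absorbs the single-part strata.) The repair is simple and worth stating explicitly: treat the strata with $\ell(\bp)=\ell(\bq)=1$ as a base case landing directly in \eqref{eq:1by1.1}, and apply the decomposition lemmas only to strata where at least one of $\bp,\bq$ has two or more parts. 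With that adjustment, the rest of your assembly --- Proposition \ref{prop:pqreduction}, the no-repeated-parts proposition, Corollary \ref{cor:lengthindecomp}, the normal-form and indecomposability propositions, and Corollary \ref{cor:indecompcomponent} --- goes through exactly as in the paper and yields both discrete and finite general representation type.
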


The referee has noted that this list gives a precise upper bound of $9mn -18(m+n)+42$ indecomposable general representations.
The list in the theorem raises the following natural question.

\begin{question}\label{q:whichstrata}
Which of the strata appearing in Theorem \ref{thm:list} are irreducible components of their corresponding representation varieties?
\end{question}

\begin{example}\label{ex:stratacontainment}
For example, it turns out that a representation of the form $M_{(k,1),(1)}$ where $3 \leq k < m$ as in \eqref{eq:1by2.1} is in the closure of the orbit of $M_{(k+1),(1)}$ as in \eqref{eq:1by1.1}.
The only proof we know for this is to construct by trial and error an explicit morphism
\begin{equation}
\phi\colon \mathbb{A}^1 \to \ol{\rep_{\Lambda}^{(k+1), (1)}((k+1, 1))}
\end{equation}
such that $\phi(t)$ is in the orbit of $M_{(k+1),(1)}$ for $t \neq 0$, and $\phi(0) = M_{(k,1),(1)}$.
It seems possible to answer Question \ref{q:whichstrata} using this approach, but may be quite tedious.
\end{example}

\section{Future directions}\label{sec:future}
A natural generalization would be to investigate algebras given by the quiver with relations below,
where $b^q c$ ($q \in \NN$) replaces the relation $b^2 c$ in the family of algebras studied in this paper.
\begin{equation}
\vcenter{\hbox{
\begin{tikzpicture}[point/.style={shape=circle,fill=black,scale=.5pt,outer sep=3pt},>=latex]
\node[point,label={below:$1$}] (1) at (0,0) {} edge[in=135,out=225,loop] node[left] {$a$} () ;
\node[point,label={below:$2$}] (2) at (2,0) {} edge[in=45,out=-45,loop] node[right] {$b$} ();
\path[->] (1) edge node[above] {$c$} (2) ;
\end{tikzpicture} }}
\qquad 
a^m = b^n = ca - bc = b^q c = 0
\end{equation}
These can be seen as ``interpolating'' between the algebras of Theorem \ref{thm:main} and those in \cite{Bobinski21}.
While quite a lot of our approach easily generalizes, the key difficulty seems to be that the conclusion of Proposition \ref{prop:pqreduction} is significantly more complicated for arbitrary $q$.
The idea to search for examples of wild algebras of finite general representation type of this form was inspired by the Hoshino-Miyachi list \cite{HM88}, which summarizes the representation type trichotomy for quotients of path algebras of quivers with two vertices.  This leads us to the following problem.
\begin{problem}
Which algebras in the Hoshino-Miyachi list \cite{HM88}, and more generally which algebras having quivers with two vertices, are of discrete or finite general representation type?
\end{problem}

Finally, we note that the results of this paper and \cite{CKW15, Bobinski21} support the following Brauer-Thrall style conjecture for general representations.  Unpacking, it says that if every irreducible component of a representation variety for $A$ has a dense orbit (i.e. $A$ has the dense orbit property), then there are only finitely many indecomposable irreducible components.

\begin{conjecture}\label{conj:discretefinite}
If $A$ is a finite-dimensional algebra of discrete general representation type (i.e. with the dense orbit property), then $A$ is of finite general representation type.
\end{conjecture}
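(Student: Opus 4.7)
My plan is to attack Conjecture \ref{conj:discretefinite} by contraposition in two stages: first, show that discrete general representation type forces brick-finiteness (Mousavand's conjecture); second, use the classification of generically $\tau$-reduced irreducible components for brick-finite algebras to deduce that only finitely many indecomposable irreducible components can exist. We assume for contradiction that $A$ has the dense orbit property but infinitely many indecomposable irreducible components, necessarily spread across infinitely many dimension vectors $\bd_1, \bd_2, \dotsc$ since each $\rep_A(\bd)$ is a variety with only finitely many irreducible components.

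The first stage would settle the implication from the introduction that Mousavand's conjecture is implied by ours. Starting from an infinite collection of isomorphism classes of bricks, the goal is to produce for some dimension vector $\bd$ a positive-dimensional family of non-isomorphic bricks lying inside a single irreducible component $C$ of $\rep_A(\bd)$. Since a brick $M$ has stabilizer $\kk^\times$ in $GL(\bd)$, its orbit attains the maximal possible dimension $\dim GL(\bd) - 1$; consequently a continuous one-parameter family of non-isomorphic bricks inside $C$ would immediately rule out the existence of a dense orbit in $C$. I would try to produce such a family using King's moduli spaces of semistable representations, combined with a Brauer-Thrall style extraction of a one-parameter family from infinitely many isomorphism classes of bricks of uniformly bounded dimension.

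The second stage leverages the structure theory available for brick-finite (equivalently $\tau$-tilting finite, by Demonet-Iyama-Jasso) algebras. In the hereditary case, Schofield and Crawley-Boevey showed that indecomposable components with a dense orbit correspond to rigid indecomposables, of which there are only finitely many. The natural non-hereditary analogue uses generically $\tau$-reduced components, developed by Plamondon, Geiss-Leclerc-Schr\"oer, and Cerulli Irelli-Labardini-Plamondon, which are finite in number in the $\tau$-tilting finite case and parametrized by $g$-vectors of $\tau$-rigid pairs. The key missing step is to show that under discrete general representation type every indecomposable irreducible component is generically $\tau$-reduced, which I would approach via a refined Artin-Voigt type inequality (Lemma \ref{lem:AV}) with $\Ext^1_A(M,-)$ replaced by the $E$-invariant of Derksen-Weyman-Zelevinsky, taking advantage of the codimension zero condition imposed by the dense orbit.

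The main obstacle I expect is the first stage. The passage from an infinite collection of bricks to a positive-dimensional family of bricks lying inside a \emph{single} irreducible component is delicate: bricks of fixed dimension $\bd$ may be scattered across several components, each contributing only finitely many orbits to the brick locus, and the brick locus is in general only constructible rather than closed. Isolating a single component with positive-dimensional brick locus likely requires a compactness or specialization argument tracking the dense orbit representatives $M_i$ of the indecomposable components $C_i$ as $\bd_i$ grows, together with control over how bricks degenerate under specialization. This is a direct analogue of the classical second Brauer-Thrall, which is already highly nontrivial, and I expect its generic version to be comparably difficult; progress on it would likely require new geometric input beyond what is used in the examples of Theorem \ref{thm:main} and \cite{Bobinski21}.
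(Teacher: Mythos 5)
The statement you are addressing is Conjecture \ref{conj:discretefinite}: the paper does not prove it (it is posed as an open, Brauer--Thrall style problem supported by the examples of Theorem \ref{thm:main} and \cite{CKW15,Bobinski21}), so there is no proof of record to compare against, and your text is a research program rather than a proof. Judged as such, it has two genuine gaps. The first you acknowledge yourself: extracting, from infinitely many isomorphism classes of bricks, a positive-dimensional family of pairwise non-isomorphic bricks inside a \emph{single} irreducible component is precisely the open ``brick Brauer--Thrall'' type problem underlying Mousavand's conjecture, and nothing in your sketch (King moduli, specialization of the dense-orbit representatives) supplies the missing mechanism. Note also the logical bookkeeping: Mousavand's conjecture is a \emph{consequence} of Conjecture \ref{conj:discretefinite} (via \cite[Thm.~1.5]{MP}), so proving it in stage one is necessary but does not by itself advance the conjecture; all the force of your plan sits in stage two.

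Stage two, however, hinges on a step that is false as stated. You propose to show that under the dense orbit property every indecomposable irreducible component is generically $\tau$-reduced (equivalently, since the dense orbit forces the generic orbit codimension in the component to be $0$, that the generic module $M$ satisfies $\Hom_A(M,\tau M)=0$, i.e.\ is $\tau$-rigid), and then invoke finiteness of such components for brick-finite algebras. Take $A=\kk[x]/(x^2)$ and the simple module $S$: the component $\rep_A(1)$ is a single point, hence an indecomposable irreducible component with a dense orbit, yet $\tau S\simeq S$ and $\Hom_A(S,\tau S)\neq 0$, so this component is not generically $\tau$-reduced and its generic module is not $\tau$-rigid. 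This is exactly the phenomenon the paper flags right after the Artin--Voigt discussion (dense orbits need no vanishing of $\Ext^1$, and the same failure persists for the $E$-invariant refinement of Lemma \ref{lem:AV} you suggest). So the bridge from ``dense orbit in every component'' to the classification of generically $\tau$-reduced components collapses already for representation-finite self-injective algebras, and the finiteness statement you want to quote (finitely many generically $\tau$-reduced components for $\tau$-tilting finite algebras, via \cite{DIJ19} and the Plamondon/Geiss--Leclerc--Schr\"oer circle of ideas) would in any case need a precise reference. As it stands, neither stage closes, and the proposal does not constitute a proof of the conjecture.
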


\bibliographystyle{alpha}
\bibliography{KL-denseorbit}

\begin{thebibliography}{CCKW20}

\bibitem[ASS06]{ASS06}
Ibrahim Assem, Daniel Simson, and Andrzej Skowro\'{n}ski.
\newblock {\em Elements of the representation theory of associative algebras.
  {V}ol. 1}, volume~65 of {\em London Mathematical Society Student Texts}.
\newblock Cambridge University Press, Cambridge, 2006.
\newblock Techniques of representation theory.

\bibitem[Bau85]{Bautista85}
Raymundo Bautista.
\newblock On algebras of strongly unbounded representation type.
\newblock {\em Comment. Math. Helv.}, 60(3):392--399, 1985.

\bibitem[BCHZ15]{BCHZ15}
Frauke~M. Bleher, Ted Chinburg, and Birge Huisgen-Zimmermann.
\newblock The geometry of finite dimensional algebras with vanishing radical
  square.
\newblock {\em J. Algebra}, 425:146--178, 2015.

\bibitem[BHZT09]{BHZT09}
E.~Babson, B.~Huisgen-Zimmermann, and R.~Thomas.
\newblock Generic representation theory of quivers with relations.
\newblock {\em J. Algebra}, 322(6):1877--1918, 2009.

\bibitem[Bob14]{Bob14}
Grzegorz Bobi\'{n}ski.
\newblock On moduli spaces for quasitilted algebras.
\newblock {\em Algebra Number Theory}, 8(6):1521--1538, 2014.

\bibitem[Bob21]{Bobinski21}
Grzegorz Bobi\'{n}ski.
\newblock Algebras with {I}rreducible {M}odule {V}arieties {III}: {B}irkhoff
  {V}arieties.
\newblock {\em Int. Math. Res. Not. IMRN}, (4):2497--2525, 2021.

\bibitem[Bon91]{Bongartz91}
Klaus Bongartz.
\newblock A geometric version of the {M}orita equivalence.
\newblock {\em J. Algebra}, 139(1):159--171, 1991.

\bibitem[Bor91]{Borel}
Armand Borel.
\newblock {\em Linear algebraic groups}, volume 126 of {\em Graduate Texts in
  Mathematics}.
\newblock Springer-Verlag, New York, second edition, 1991.

\bibitem[CB88]{CB88}
W.~W. Crawley-Boevey.
\newblock On tame algebras and bocses.
\newblock {\em Proc. London Math. Soc. (3)}, 56(3):451--483, 1988.

\bibitem[CB96a]{CB96}
William Crawley-Boevey.
\newblock On homomorphisms from a fixed representation to a general
  representation of a quiver.
\newblock {\em Trans. Amer. Math. Soc.}, 348(5):1909--1919, 1996.

\bibitem[CB96b]{CB96b}
William Crawley-Boevey.
\newblock Subrepresentations of general representations of quivers.
\newblock {\em Bull. London Math. Soc.}, 28(4):363--366, 1996.

\bibitem[CBS02]{CBS02}
William Crawley-Boevey and Jan Schr{{\"o}}er.
\newblock Irreducible components of varieties of modules.
\newblock {\em J. Reine Angew. Math.}, 553:201--220, 2002.

\bibitem[CC15]{CC18}
Andrew~T. Carroll and Calin Chindris.
\newblock Moduli spaces of modules of {S}chur-tame algebras.
\newblock {\em Algebr. Represent. Theory}, 18(4):961--976, 2015.

\bibitem[CCKW20]{CCKW18}
Andrew~T. Carroll, Calin Chindris, Ryan Kinser, and Jerzy Weyman.
\newblock Moduli spaces of representations of special biserial algebras.
\newblock {\em Int. Math. Res. Not. IMRN}, (2):403--421, 2020.

\bibitem[CK18]{CK18}
Calin Chindris and Ryan Kinser.
\newblock Decomposing moduli of representations of finite-dimensional algebras.
\newblock {\em Math. Ann.}, 372(1-2):555--580, 2018.

\bibitem[CKW15]{CKW15}
Calin Chindris, Ryan Kinser, and Jerzy Weyman.
\newblock Module varieties and representation type of finite-dimensional
  algebras.
\newblock {\em Int. Math. Res. Not. IMRN}, (3):631--650, 2015.

\bibitem[CS08]{CS08}
Alastair Craw and Gregory~G. Smith.
\newblock Projective toric varieties as fine moduli spaces of quiver
  representations.
\newblock {\em Amer. J. Math.}, 130(6):1509--1534, 2008.

\bibitem[Dad80]{Dade79}
Everett~C. Dade.
\newblock Algebraically rigid modules.
\newblock In {\em Representation theory, {II} ({P}roc. {S}econd {I}nternat.
  {C}onf., {C}arleton {U}niv., {O}ttawa, {O}nt., 1979)}, volume 832 of {\em
  Lecture Notes in Math.}, pages 195--215. Springer, Berlin, 1980.

\bibitem[DF15]{DF15}
Harm Derksen and Jiarui Fei.
\newblock General presentations of algebras.
\newblock {\em Adv. Math.}, 278:210--237, 2015.

\bibitem[DIJ19]{DIJ19}
Laurent Demonet, Osamu Iyama, and Gustavo Jasso.
\newblock {$\tau$}-tilting finite algebras, bricks, and {$g$}-vectors.
\newblock {\em Int. Math. Res. Not. IMRN}, (3):852--892, 2019.

\bibitem[DMS19]{DMS19}
Piotr Dowbor, Hagen Meltzer, and Markus Schmidmeier.
\newblock The ``{$0,1$}-property'' of exceptional objects for nilpotent
  operators of degree 6 with one invariant subspace.
\newblock {\em J. Pure Appl. Algebra}, 223(7):3150--3203, 2019.

\bibitem[Dro80]{Drozd}
Ju.~A. Drozd.
\newblock Tame and wild matrix problems.
\newblock In {\em Representation theory, {II} ({P}roc. {S}econd {I}nternat.
  {C}onf., {C}arleton {U}niv., {O}ttawa, {O}nt., 1979)}, volume 832 of {\em
  Lecture Notes in Math.}, pages 242--258. Springer, Berlin-New York, 1980.

\bibitem[DSW07]{DSW07}
Harm Derksen, Aidan Schofield, and Jerzy Weyman.
\newblock On the number of subrepresentations of a general quiver
  representation.
\newblock {\em J. Lond. Math. Soc. (2)}, 76(1):135--147, 2007.

\bibitem[DW02]{DW02}
Harm Derksen and Jerzy Weyman.
\newblock On the canonical decomposition of quiver representations.
\newblock {\em Compositio Math.}, 133(3):245--265, 2002.

\bibitem[DW17]{DWbook}
Harm Derksen and Jerzy Weyman.
\newblock {\em An introduction to quiver representations}, volume 184 of {\em
  Graduate Studies in Mathematics}.
\newblock American Mathematical Society, Providence, RI, 2017.

\bibitem[GHZ18]{GHZ18}
K.~R. Goodearl and B.~Huisgen-Zimmermann.
\newblock Understanding finite dimensional representations generically.
\newblock In {\em Geometric and topological aspects of the representation
  theory of finite groups}, volume 242 of {\em Springer Proc. Math. Stat.},
  pages 131--179. Springer, Cham, 2018.

\bibitem[Hes76]{Hesselink76}
Wim Hesselink.
\newblock Singularities in the nilpotent scheme of a classical group.
\newblock {\em Trans. Amer. Math. Soc.}, 222:1--32, 1976.

\bibitem[HM88]{HM88}
Mitsuo Hoshino and Jun-ichi Miyachi.
\newblock Tame two-point algebras.
\newblock {\em Tsukuba J. Math.}, 12(1):65--96, 1988.

\bibitem[HZ14]{HZ14}
B.~Huisgen-Zimmermann.
\newblock Fine and coarse moduli spaces in the representation theory of finite
  dimensional algebras.
\newblock In {\em Expository lectures on representation theory}, volume 607 of
  {\em Contemp. Math.}, pages 1--34. Amer. Math. Soc., Providence, RI, 2014.

\bibitem[Kac83]{Kac83}
Victor~G. Kac.
\newblock Root systems, representations of quivers and invariant theory.
\newblock In {\em Invariant theory ({M}ontecatini, 1982)}, volume 996 of {\em
  Lecture Notes in Math.}, pages 74--108. Springer, Berlin, 1983.

\bibitem[Kin94]{King94}
A.~D. King.
\newblock Moduli of representations of finite-dimensional algebras.
\newblock {\em Quart. J. Math. Oxford Ser. (2)}, 45(180):515--530, 1994.

\bibitem[KKS18]{KKS18}
Mariusz Kaniecki, Justyna Kosakowska, and Markus Schmidmeier.
\newblock Operations on arc diagrams and degenerations for invariant subspaces
  of linear operators. {P}art {II}.
\newblock {\em Comm. Algebra}, 46(5):2243--2263, 2018.

\bibitem[KS15]{KS15}
Justyna Kosakowska and Markus Schmidmeier.
\newblock Operations on arc diagrams and degenerations for invariant subspaces
  of linear operators.
\newblock {\em Trans. Amer. Math. Soc.}, 367(8):5475--5505, 2015.

\bibitem[KS17]{KS17}
Justyna Kosakowska and Markus Schmidmeier.
\newblock Box moves on {L}ittlewood-{R}ichardson tableaux and an application to
  invariant subspace varieties.
\newblock {\em J. Algebra}, 491:241--264, 2017.

\bibitem[KS18]{KS18}
Justyna Kosakowska and Markus Schmidmeier.
\newblock The boundary of the irreducible components for invariant subspace
  varieties.
\newblock {\em Math. Z.}, 290(3-4):953--972, 2018.

\bibitem[Mou22]{Mousavand}
Kaveh Mousavand.
\newblock {$\tau$}-tilting finiteness of non-distributive algebras and their
  module varieties.
\newblock {\em J. Algebra}, 608:673--690, 2022.

\bibitem[MP23]{MP}
Kaveh Mousavand and Charles Paquette.
\newblock Minimal ({$\tau $}-)tilting infinite algebras.
\newblock {\em Nagoya Math. J.}, 249:221--238, 2023.

\bibitem[Rei03]{Reineke03}
Markus Reineke.
\newblock The {H}arder-{N}arasimhan system in quantum groups and cohomology of
  quiver moduli.
\newblock {\em Invent. Math.}, 152(2):349--368, 2003.

\bibitem[RS06]{RS06}
Claus~Michael Ringel and Markus Schmidmeier.
\newblock Submodule categories of wild representation type.
\newblock {\em J. Pure Appl. Algebra}, 205(2):412--422, 2006.

\bibitem[RS08a]{RS08b}
Claus~Michael Ringel and Markus Schmidmeier.
\newblock The {A}uslander-{R}eiten translation in submodule categories.
\newblock {\em Trans. Amer. Math. Soc.}, 360(2):691--716, 2008.

\bibitem[RS08b]{RS08}
Claus~Michael Ringel and Markus Schmidmeier.
\newblock Invariant subspaces of nilpotent linear operators. {I}.
\newblock {\em J. Reine Angew. Math.}, 614:1--52, 2008.

\bibitem[Sch92]{Schofield92}
Aidan Schofield.
\newblock General representations of quivers.
\newblock {\em Proc. London Math. Soc. (3)}, 65(1):46--64, 1992.

\bibitem[Sch01]{Schofield01}
Aidan Schofield.
\newblock Birational classification of moduli spaces of representations of
  quivers.
\newblock {\em Indag. Math. (N.S.)}, 12(3):407--432, 2001.

\bibitem[Sch14]{Schiffler14}
Ralf Schiffler.
\newblock {\em Quiver representations}.
\newblock CMS Books in Mathematics/Ouvrages de Math\'ematiques de la SMC.
  Springer, Cham, 2014.

\bibitem[Sma80]{Smalo80}
Sverre~O. Smal\o.
\newblock The inductive step of the second {B}rauer-{T}hrall conjecture.
\newblock {\em Canad. J. Math.}, 32(2):342--349, 1980.

\bibitem[Voi77]{Voigt}
Detlef Voigt.
\newblock {\em Induzierte {D}arstellungen in der {T}heorie der endlichen,
  algebraischen {G}ruppen}.
\newblock Lecture Notes in Mathematics, Vol. 592. Springer-Verlag, Berlin,
  1977.
\newblock Mit einer englischen Einf{{\"u}}hrung.

\end{thebibliography}

\end{document}